\newcommand{\sm}{\left(\begin{smallmatrix}}
\newcommand{\esm}{\end{smallmatrix}\right)}
\newcommand{\bpm}{\begin{pmatrix}}
\newcommand{\ebpm}{\end{pmatrix}}
\definecolor{blue}{rgb}{0,0,1}
\definecolor{red}{rgb}{1,0,0}
\definecolor{green}{rgb}{0,.6,.2}
\definecolor{purple}{rgb}{1,0,1}
\numberwithin{equation}{section}
\newtheorem{theorem}{Theorem}[section]
\newtheorem{lemma}[theorem]{Lemma}
\newtheorem{corollary}[theorem]{Corollary}
\newtheorem{proposition}[theorem]{Proposition}
\theoremstyle{definition}  
\newtheorem{definition} [theorem] {Definition}
\newtheorem{remark} [theorem] {Remark}
\newcommand{\keywords}[1]{\textbf{{Key Words:}} #1}
\newcommand{\NX}{(\Gamma_0(N), \chi)}
\long\def\red#1\endred{\textcolor{red}{#1}}
\long\def\blue#1\endblue{\textcolor{blue}{#1}}
\long\def\purple#1\endpurple{\textcolor{purple}{ #1}}
\long\def\green#1\endgreen{\textcolor{green}{#1}}
\title{Real-analytic modular forms for $\Gamma_0(N)$ and their $L$-series}
\author{Joshua Drewitt and Joshua Pimm}
\date{}
\begin{document}

\maketitle

\begin{abstract}

We extend the definition of real-analytic modular forms from $SL_2(\mathbb{Z})$ 
to congruence subgroups of the type $\Gamma_0(N)$. We examine their properties and discuss examples, such as real-analytic Eisenstein series and modular iterated integrals. We also associate an $L$-series to these forms and prove its functional equation. For the $L$-series of a special class of forms, which includes length-one modular iterated integrals, we establish a converse theorem.
\end{abstract}

\keywords{Real-analytic modular forms, non-holomorphic modular forms, congruence subgroups, $L$-series, converse theorem, real-analytic Eisenstein series,  modular iterated integrals.}

\section{Introduction}

Brown \cite{brown1,brown2,brown3} introduced real-analytic modular forms, which share the characteristics of various previously studied important modular objects, such as holomorphic modular forms and harmonic Maass forms, but also have many of their own interesting features.  They have applications to the theory of mixed motives, single polylogarithms and to string perturbation theory. A subspace of particular importance to the latter is the space of modular iterated integrals $\mathcal{MI}^!$, which are currently being studied in detail  \cite{DrewittLap,DiamantisModit,Dorigoni:2022npe,Dorigoni:2023part1}.

Let $(r,s)\in\mathbb{Z}^2$. A real-analytic modular form $F$ of weights $(r,s)$ satisfies the transformation law
\begin{equation}
F(\gamma z)(cz+d)^{-r}(c\bar{z}+d)^{-s}=F(z),
\end{equation}
for all $z\in\mathbb{H}$ (the upper-half plane) and $\gamma=\begin{psmallmatrix}
        a & b \\ c & d
    \end{psmallmatrix}\in SL_2(\mathbb{Z})$. It also has an expansion of the form
\begin{equation}\label{generalexpintro}
F(z)=\sum_{|j| \le N_0}y^j\left ( \sum_{m, n \ge -N_0'} a_{m, n}^{(j)} \, q^m \overline{q}^n\right ) \!,
\end{equation}
where $N_0,N_0'\in\mathbb{N}$, $a_{m,n}^{(j)}\in\mathbb{C}$, and $q \coloneqq e^{2\pi i z}$. We denote the space of these forms by $\mathcal{M}^!_{r,s}$.

In \cite{nikoMe}, an $L$-series was defined which was appropriate for the study of length-one modular iterated integrals $\mathcal{MI}_1^!$, but it lacked certain features when compared to the theory of $L$-series associated to classical modular forms. In particular, their $L$-series was constructed to satisfy a functional equation, but not a converse theorem.

In recent work, Diamantis et. al. introduced $L$-series associated to harmonic Maass forms and established a functional equation and converse theorem \cite{diamantisLseries}. Harmonic Maass forms have been the subject of intense study in recent years, leading to many striking arithmetic, geometric and combinatorial results, but they are different modular objects to those studied here. Nevertheless, in this paper, we show that the construction of \cite{diamantisLseries} can be adapted to the setting of real-analytic modular forms. In particular, we establish a functional equation and converse theorem for a class of real-analytic modular forms that includes $\mathcal{MI}^!_1$.
For the purposes of the introduction, we now state weaker versions of our definitions and main theorems.
\begin{definition}
Let $F$ have an expansion of the form
\begin{equation}\label{Fexpformintro}
F(z)=\sum_{k\in S}y^k\sum_{n\geq n_0}a_n^{(k)}q^{n/M} + \sum_{k\in S}y^k\sum_{n\geq n_0}b_n^{(k)}\bar q^{n/M},
\end{equation}
for some $S\subset\mathbb{Z}$ a finite subset, $n_0\in\mathbb{Z}$, $M\in\mathbb{N}$. Let $\varphi\in\mathcal{F}_F$, where $\mathcal{F}_F$ is a space of test functions defined fully in Section \ref{Lseriessec}. The $L$-series associated to $F$ is then defined to be
\begin{equation}
L_F(\varphi) \coloneqq \sum_{k\in S}\sum_{n\geq n_0}a_{n}^{(k)} \,\mathcal{L}\varphi_{k+1}\left(\frac{2\pi n}{M}\right)+\sum_{k\in S}\sum_{n\geq n_0}b_{n}^{(k)} \,\mathcal{L}\varphi_{k+1}\left(\frac{2\pi n}{M}\right).
\end{equation}
where $\varphi_{k+1}(y) \coloneqq y^k\varphi(y)$, and $\mathcal{L}\varphi_{k+1}(s)$ is its Laplace transform.
\end{definition}

Finally, we write $\varphi|_{a}W_1(y) \coloneqq y^{-a}\varphi(1/y)$.
The above $L$-series satisfies the following functional equation and converse theorem:

\begin{theorem}[Functional equation]\label{funceqintro}
Let $F\in\mathcal{M}^!_{r,s}$ have an expansion of the form
\begin{equation}
F(z)=\sum_{k\in S}y^k\sum_{n\geq n_0}a_n^{(k)}q^{n} + \sum_{k\in S}y^k\sum_{n\geq n_0}b_n^{(k)}\bar q^{n}.
\end{equation}
Then, for $\varphi\in\mathcal{F}_F$ such that $\varphi|_{2-r-s} W_1\in\mathcal{F}_F$, we have the functional equation
\begin{equation}
L_{F}(\varphi)=i^{r-s}L_{F}(\varphi|_{2-r-s} W_1).
\end{equation}
\end{theorem}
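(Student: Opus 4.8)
The plan is to express $L_F(\varphi)$ as a Mellin–Laplace-type integral of $F$ against $\varphi$ along the imaginary axis, and then exploit the modular transformation law under the involution $z \mapsto -1/z$ to obtain the functional equation. First I would observe that the Laplace transform $\mathcal{L}\varphi_{k+1}(2\pi n)$ that appears in each summand of $L_F(\varphi)$ can be written as $\int_0^\infty \varphi_{k+1}(y) e^{-2\pi n y}\,dy = \int_0^\infty y^k \varphi(y) e^{-2\pi n y}\,dy$. Summing over $n \ge n_0$ and over $k \in S$, and interchanging sum and integral (justified by the decay/growth conditions built into the definition of $\mathcal{F}_F$, and the fact that the $q$-expansion of $F$ converges suitably), I would recognise
\[
\sum_{k\in S} y^k \sum_{n\ge n_0} a_n^{(k)} e^{-2\pi n y}
\]
as (the holomorphic part of) $F(iy)$: indeed, setting $z = iy$ gives $q = e^{2\pi i z} = e^{-2\pi y}$ and $\bar q = e^{-2\pi y}$ as well, so $F(iy) = \sum_{k\in S} y^k \sum_{n\ge n_0}(a_n^{(k)} + b_n^{(k)}) e^{-2\pi n y}$. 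Hence $L_F(\varphi) = \int_0^\infty F(iy)\,\varphi(y)\,\frac{dy}{y} \cdot y$ — more precisely $L_F(\varphi) = \int_0^\infty F(iy)\varphi(y)\,dy$, after matching the power of $y$ with the convention $\varphi_{k+1}(y) = y^k\varphi(y)$. (I would double-check the exact normalisation of $dy$ versus $dy/y$ against the definition of $\mathcal{L}$, but the structural point is that $L_F(\varphi)$ is the integral transform of $F$ restricted to the imaginary axis.)

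Next I would apply the transformation law. Taking $\gamma = \begin{psmallmatrix} 0 & -1 \\ 1 & 0\end{psmallmatrix} \in SL_2(\mathbb{Z})$ (which lies in $\Gamma_0(N)$ for every $N$, so the $\Gamma_0(N)$ generality causes no trouble here), the law $F(\gamma z)(cz+d)^{-r}(c\bar z+d)^{-s} = F(z)$ with $c=1,d=0$ gives $F(-1/z) = z^r \bar z^{\,s} F(z)$. Specialising to $z = iy$, we get $-1/z = -1/(iy) = i/y$, and $z^r \bar z^{\,s} = (iy)^r(-iy)^s = i^{r-s} y^{r+s}$, so
\[
F(i/y) = i^{r-s}\, y^{r+s}\, F(iy).
\]
Then in the integral $L_F(\varphi) = \int_0^\infty F(iy)\varphi(y)\,dy$ I would substitute $y \mapsto 1/y$, so $dy \mapsto -dy/y^2$, obtaining $\int_0^\infty F(i/y)\varphi(1/y)\,\frac{dy}{y^2} = i^{r-s}\int_0^\infty y^{r+s} F(iy)\varphi(1/y)\,\frac{dy}{y^2} = i^{r-s}\int_0^\infty F(iy)\, y^{-(2-r-s)}\varphi(1/y)\,dy$. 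By definition $\varphi|_{2-r-s}W_1(y) = y^{-(2-r-s)}\varphi(1/y)$, so the right-hand side is exactly $i^{r-s} L_F(\varphi|_{2-r-s}W_1)$, which is the claimed functional equation. The hypothesis $\varphi|_{2-r-s}W_1 \in \mathcal{F}_F$ is precisely what guarantees that $L_F(\varphi|_{2-r-s}W_1)$ is defined and that the manipulations on that side are legitimate.

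The main obstacle I anticipate is the analytic justification rather than the formal algebra: one must verify that $F(iy)$ genuinely decomposes as the stated Dirichlet-type series on the whole ray $y \in (0,\infty)$ (the $q$-expansion a priori converges only near the cusp, i.e. for $y$ large), that interchanging $\sum_{k,n}$ with $\int_0^\infty$ is valid, and that the integral $\int_0^\infty F(iy)\varphi(y)\,dy$ converges absolutely at both endpoints. Convergence as $y \to \infty$ follows from the exponential decay $e^{-2\pi n_0 y}$ coming from $F$ (assuming $n_0 \ge 1$; if $n_0 \le 0$ one needs $\varphi$ to kill the polynomially-or-mildly-growing terms, which is exactly what the growth conditions defining $\mathcal{F}_F$ should encode). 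Convergence as $y \to 0^+$ is the delicate end: here $F(iy) = i^{r-s}y^{r+s}F(i/y)$ blows up like $y^{r+s}$ times something exponentially decaying, and one needs $\varphi(y)$ to vanish fast enough as $y\to 0^+$ — again a defining property of $\mathcal{F}_F$ (e.g. $\varphi$ and all its relevant transforms decay faster than any power of $y$ near $0$). So the real content is bookkeeping the membership conditions in $\mathcal{F}_F$ and invoking dominated convergence; I would structure the proof by first proving the integral representation $L_F(\varphi) = \int_0^\infty F(iy)\varphi(y)\,dy$ as a lemma (or citing it from Section~\ref{Lseriessec}), and then the functional equation is the two-line substitution above.
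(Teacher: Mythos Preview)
Your proposal is correct and follows essentially the same route as the paper: establish the integral representation $L_F(\varphi)=\int_0^\infty F(iy)\varphi(y)\,dy$ (this is Lemma~\ref{integralformlemma}, proved exactly by the sum--integral interchange you describe), then substitute $y\mapsto 1/y$ and apply the modular law under $W_1=\begin{psmallmatrix}0&-1\\1&0\end{psmallmatrix}$ to produce $i^{r-s}L_F(\varphi|_{2-r-s}W_1)$. One small correction to a side remark: $\begin{psmallmatrix}0&-1\\1&0\end{psmallmatrix}$ does \emph{not} lie in $\Gamma_0(N)$ for $N>1$ (the lower-left entry is $1$), but this is harmless here since the stated theorem concerns $\mathcal{M}^!_{r,s}=\mathcal{M}^!_{r,s}(SL_2(\mathbb{Z}))$; for general $N$ the paper instead compares $F$ with $G=F||_{r,s}W_N$.
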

\begin{theorem}[Converse theorem]\label{convintro}
Let $F$ have an expansion of the form
\begin{equation*}
F(z)=\sum_{k\in S}y^k\sum_{n\geq n_0}a_n^{(k)}q^{n} + \sum_{k\in S}y^k\sum_{n\geq n_0}b_n^{(k)}\bar q^{n},
\end{equation*}
where $a_n^{(k)},b_n^{(k)}\ll e^{Cn^{1/t}}$ for some $C$ positive, $t>1$. Assume that
\begin{align}
     L_{F}(\varphi) =  i^{r-s}L_{F}(\varphi|_{2-r-s} W_1), \label{Converse1aintro} 
     \\[2mm]
     L_{\partial_r F}(\varphi) =  -i^{r-s} L_{\partial_r F}(\varphi|_{2-r-s} W_1), \label{Converse1bintro} 
     \\[2mm]
     L_{\bar\partial_s F}(\varphi) =  -i^{r-s} L_{\bar\partial_s F}(\varphi|_{2-r-s} W_1), \label{Converse1cintro}
\end{align}
for some subset $X$ of $\mathcal{F}_F$ such that for all $y>0$, there exists a $\varphi\in X$ such that $\varphi(y)\neq0$. Here $\partial_r$ and $\overline\partial_s$ are Maass raising and lowering operators from \emph{\cite{brown1}}, see also \eqref{partial}. Then $F\in\mathcal{M}^!_{r,s}$.
\end{theorem}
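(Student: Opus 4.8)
The plan is to first run the argument behind Theorem~\ref{funceqintro} in reverse, following \cite{diamantisLseries}: convert the three functional equations into transformation identities for $F$, $\partial_r F$ and $\overline{\partial}_s F$ on the positive imaginary axis, and then use the special shape \eqref{Fexpformintro} together with the operators $\partial_r,\overline{\partial}_s$ to promote these to the full $SL_2(\mathbb{Z})$-transformation law. Since $M=1$ here, the expansion gives $F(z+1)=F(z)$, so $F$ is automatically invariant under $T=\begin{psmallmatrix}1&1\\0&1\end{psmallmatrix}$; as $SL_2(\mathbb{Z})=\langle S,T\rangle$ with $S=\begin{psmallmatrix}0&-1\\1&0\end{psmallmatrix}$, it remains to prove $F(-1/z)=z^{r}\bar z^{s}F(z)$ on all of $\mathbb{H}$ (recall $r+s$ is even, so the automorphy constants below are unambiguous). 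The operators from \cite{brown1} send forms of the shape \eqref{Fexpformintro} to forms of the same shape of bi-weights $(r+1,s-1)$ and $(r-1,s+1)$, and are equivariant for the corresponding slash actions; so it suffices to show that $F$, $\partial_r F$ and $\overline{\partial}_s F$ each satisfy the $S$-transformation law of its own bi-weight.

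The first ingredient is the integral representation $L_F(\varphi)=\int_0^{\infty}\varphi(y)F(iy)\,dy$, and the identical formulas for $\partial_r F$ and $\overline{\partial}_s F$. Indeed $\mathcal{L}\varphi_{k+1}(2\pi n)=\int_0^{\infty}y^{k}\varphi(y)e^{-2\pi ny}\,dy$ and $q^{n}|_{z=iy}=\overline{q}^{\,n}|_{z=iy}=e^{-2\pi ny}$, so the claim follows once the sum over $(k,n)$ is exchanged with the integral; the bound $a_n^{(k)},b_n^{(k)}\ll e^{Cn^{1/t}}$ with $t>1$ makes $F(iy)$ (and $\partial_r F$, $\overline{\partial}_s F$) converge for every $y>0$ with at most exponential growth as $y\to 0^{+}$, which is dominated by the decay built into $\mathcal{F}_F$, legitimising the interchange.

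Next, since $\varphi|_{2-r-s}W_1(y)=y^{r+s-2}\varphi(1/y)$, the substitution $y\mapsto 1/y$ gives $L_F(\varphi|_{2-r-s}W_1)=\int_0^{\infty}\varphi(y)\,y^{-r-s}F(i/y)\,dy$, and similarly for $\partial_r F$, $\overline{\partial}_s F$. Thus \eqref{Converse1aintro}, \eqref{Converse1bintro}, \eqref{Converse1cintro} say that $\int_0^{\infty}\varphi(y)G(y)\,dy=0$ for all $\varphi\in X$, where $G$ is respectively $F(iy)-i^{r-s}y^{-r-s}F(i/y)$, or $(\partial_r F)(iy)+i^{r-s}y^{-r-s}(\partial_r F)(i/y)$, or $(\overline{\partial}_s F)(iy)+i^{r-s}y^{-r-s}(\overline{\partial}_s F)(i/y)$. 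Each such $G$ is real-analytic on $(0,\infty)$, and the hypothesis that some $\varphi\in X$ is nonvanishing at any prescribed point, together with the properties of $X$ established in Section~\ref{Lseriessec}, forces $G\equiv 0$. Hence $F(iy)=i^{r-s}y^{-r-s}F(i/y)$, $(\partial_r F)(iy)=-i^{r-s}y^{-r-s}(\partial_r F)(i/y)$ and $(\overline{\partial}_s F)(iy)=-i^{r-s}y^{-r-s}(\overline{\partial}_s F)(i/y)$ for all $y>0$; the sign change in the last two is exactly what Theorem~\ref{funceqintro} predicts for the bi-weights $(r+1,s-1)$ and $(r-1,s+1)$, which is precisely why \eqref{Converse1bintro} and \eqref{Converse1cintro} are the right hypotheses. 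These three identities are the $S$-transformation laws restricted to the imaginary axis.

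Upgrading these imaginary-axis identities to identities on all of $\mathbb{H}$ is the main obstacle. In Hecke's classical converse theorem one simply invokes the identity theorem for holomorphic functions, but $F$ is real-analytic rather than holomorphic, so agreement of $F$ with $z^{-r}\bar z^{-s}F(-1/z)$ along a single curve does not by itself force agreement on $\mathbb{H}$; this is exactly where the special shape \eqref{Fexpformintro} and the use of \emph{three} functional equations come in. Writing $F=\sum_{k\in S}y^{k}\big(f_k(z)+g_k(\bar z)\big)$ with each $f_k$ a $q$-series and each $g_k$ a $\overline{q}$-series, a direct computation with $\partial_r,\overline{\partial}_s$ shows that $\partial_r F$ carries the data of the holomorphic parts $f_k$ (through their derivatives) together with the $g_k$ shifted by $y^{k}\mapsto y^{k-1}$, and symmetrically $\overline{\partial}_s F$ carries the $g_k$ and the shifted $f_k$; combined with $F$ itself, whose restriction to the imaginary axis recovers the family $\{a_n^{(k)}+b_n^{(k)}\}$ by linear independence of the functions $y^{k}e^{-2\pi ny}$, the three transformation laws above pin down how every $y$-graded holomorphic and antiholomorphic piece of $F$ transforms along the imaginary axis. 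A Hecke-type argument applied to these finitely many pieces then lifts the transformation law to all of $\mathbb{H}$ and, reassembling, gives $F(-1/z)=z^{r}\bar z^{s}F(z)$. Equivalently, $\Phi(z):=z^{-r}\bar z^{-s}F(-1/z)-F(z)$ is real-analytic with $\Phi$, $\partial_r\Phi$ and $\overline{\partial}_s\Phi$ all vanishing on the imaginary axis, and the special shape of $F$ forces $\Phi\equiv 0$. Together with the automatic $T$-invariance this yields $F\in\mathcal{M}^!_{r,s}$. Making the reconstruction precise — the linear bookkeeping relating the graded pieces, the way the modular transformation mixes them, and the $n=0$ terms where the holomorphic and antiholomorphic parts coincide — is the technical core of the proof, playing the role that the holomorphic-projection step plays in \cite{diamantisLseries}.
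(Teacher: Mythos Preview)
Your overall architecture matches the paper's: translate the three functional equations into identities for $F$, $\partial_r F$, $\overline{\partial}_s F$ along the positive imaginary axis via the integral representation, then upgrade to the $S$-transformation law on all of~$\mathbb{H}$. The first stage is fine and is essentially the paper's argument (Lemma~\ref{integralformlemma} plus the substitution $y\mapsto 1/y$, or equivalently the Mellin inversions in the proof of Theorem~\ref{converseth}).

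The gap is in your upgrade step. Your primary proposal --- decompose $F=\sum_k y^k(f_k(z)+g_k(\bar z))$ and run a ``Hecke-type argument'' on the holomorphic/antiholomorphic pieces separately --- does not work as stated, because the $S$-action does not respect the $y$-grading: $\mathrm{Im}(-1/z)=y/(z\bar z)$, so $(F||_{r,s}S)(z)$ mixes all the $y^k$-layers and the individual $f_k,g_k$ acquire no clean transformation law to which the identity principle could be applied. The ``linear bookkeeping'' you allude to would have to undo this mixing, and you give no indication how.

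Your alternative formulation --- set $\Phi=F-F||_{r,s}S$ and observe that $\Phi$, $\partial_r\Phi$, $\overline{\partial}_s\Phi$ all vanish on $i\mathbb{R}_{>0}$ --- \emph{is} the paper's route, but you miss the one-line mechanism that makes it work. Since $\partial_r=2iy\,\partial_z+r$ and $\overline{\partial}_s=-2iy\,\partial_{\bar z}+s$, the vanishing of $\Phi(iy)$, $\partial_r\Phi(iy)$, $\overline{\partial}_s\Phi(iy)$ gives $\partial_z\Phi(iy)=\partial_{\bar z}\Phi(iy)=0$, hence
\[
\frac{\partial}{\partial x}\Phi(iy)=\Big(\frac{\partial}{\partial z}+\frac{\partial}{\partial\bar z}\Big)\Phi(iy)=0.
\]
Now the paper invokes Lemma~\ref{BumLem} (an analogue of Bump's Lemma~1.9.2): for a function with an expansion of the shape \eqref{fexpform}, the conditions $\Phi(iy)=0$ and $\tfrac{\partial}{\partial x}\Phi(iy)=0$ force all coefficients to vanish, via exactly the linear-independence of $y^k e^{-2\pi ny}$ that you mention --- the first condition kills $a_n^{(k)}+b_n^{(k)}$ and the second kills $a_n^{(k)}-b_n^{(k)}$. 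This is the replacement for the holomorphic identity theorem, and it is the reason three functional equations are needed rather than one; your proposal gestures at this but never isolates the passage from $(\Phi,\partial_r\Phi,\overline{\partial}_s\Phi)\big|_{i\mathbb{R}}=0$ to $\partial_x\Phi\big|_{i\mathbb{R}}=0$, which is the crux.

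A smaller point: your deduction ``$\int_0^\infty\varphi(y)G(y)\,dy=0$ for all $\varphi\in X$ forces $G\equiv 0$'' is not justified by the bare hypothesis that $X$ hits every point; the paper uses compactly supported test functions (and Mellin inversion against $\varphi_s$) to make this step rigorous.
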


The above two theorems are in analogy to the \emph{Hecke} functional equation and converse theorem. The main results in the paper, Theorem \ref{functionaleq} and Theorem \ref{converseth}, are in analogy with the theorems of \emph{Weil} (c.f. \cite{topics} chapter 7).

As far as the authors are aware, real-analytic modular forms have always been defined in relation to the  group $SL_2(\mathbb{Z})$ and, in particular, the theory has not yet been extended to include congruence subgroups. It is the introduction of real-analytic forms on $\Gamma_0(N)$ which allows us to exploit the full potential of the theory of $L$-series, going from the aforementioned Hecke theory to the more general Weil theory.

In this paper, we set up such a framework and provide a survey of real-analytic modular forms over $\Gamma_0(N)$:
\begin{equation*}
        \Gamma_0(N)  := \left\{ \begin{pmatrix}
        a & b \\ c & d
    \end{pmatrix}  \in SL_2(\mathbb{Z}) : c \equiv 0 \, {\rm mod} \, N \right\}.
\end{equation*}

After some preliminaries in Section \ref{Prelims}, we start in Section \ref{secprops} by defining such forms and studying how the differential operators $\partial$, $\overline{\partial}$ and the Laplacian $\Delta$ \eqref{lap} act on these forms. We look at examples of real-analytic modular forms and define the real-analytic Eisenstein series $\mathcal{E}_{u}(z,r,s,\chi)$ associated to a cusp $u$ and character $\chi$. In Section \ref{Lseriessec}, we define the $L$-series associated to real-analytic modular forms, and prove the functional equation (Theorem \ref{functionaleq}) and converse theorem (Theorem \ref{converseth}).

This theory was designed with the space of special interest $\mathcal{MI}^!_1$ in mind. However, it is possible to define an $L$-series for general real-analytic modular forms, not just those satisfying the expansion type (\ref{Fexpformintro}), but the more general (\ref{generalexpintro}). For the more general $L$-series, it comes at the expense of an additional so-called `auxiliary variable' $u$, originally implemented by Bykowski \cite{bykovskii2000functional} and used in the context of double Dirichlet series and metaplectic Eisenstein series by Diamantis-Goldfeld in \cite{DiamantisGoldfeld2}. We can also state and prove a functional equation and converse theorem in this case, and we sketch the method in Section \ref{Lseriessec}.

\vspace{4mm}

\textbf{Acknowledgements.} We thank N. Diamantis for his valuable support and feedback during this writing process and M. Lee for her helpful comments and encouragement. The research of JD was supported by the Royal Society (Spectral theory of automorphic forms: trace formulas and more). JP was supported by EPSRC DTP grant EP/R513283/1.

\newpage

\section{Preliminaries}\label{Prelims}
In this section, we set up the basic groundwork needed in order to extend the definition of real-analytic modular forms from $SL_2(\mathbb{Z})$ to certain congruence subgroups. We begin by introducing such subgroups:

Let $N\geq1$ be an integer, we are interested in (level $N$) congruence subgroups that are of the form
\begin{equation}
\Gamma_0(N)\coloneqq \left\{ \begin{pmatrix}
        a & b \\ c & d
    \end{pmatrix}  \in SL_2(\mathbb{Z}) : c \equiv 0 \, {\rm mod} \, N \right\}.
\end{equation}
For a cusp $u$ of $\Gamma_0(N)$, the stabiliser of $u$ is given by the group
\begin{equation*}
\Gamma_u \coloneqq \{  \gamma \in \Gamma : \gamma u = u \}.
\end{equation*}
There exists a $\sigma_u \in SL_2(\mathbb{Z})$, called a scaling matrix of $u$, such that $\sigma_u (\infty)= u$ and
\begin{equation}
    \sigma_u^{-1} \Gamma_u \sigma_u = \Gamma_\infty = \left\{ \pm \begin{pmatrix}
1 &  b \\
0 &  1  
\end{pmatrix}  : b \in \mathbb{Z} \right\}.
\end{equation}
We can see that $\gamma_u \coloneqq \sigma_u^{-1} \begin{psmallmatrix}
    1 & 1\\
    0 & 1
\end{psmallmatrix} \sigma_u$ and $-\gamma_u$ generate the stabiliser $\Gamma_u$. To make computations simpler throughout, we will always take
\begin{equation}
  \sigma_\infty = \begin{pmatrix}
    1 & 0 \\
    0 & 1
\end{pmatrix}.
\end{equation}
Let $\chi$ be a Dirichlet character modulo $N$, then, for $\gamma = \begin{psmallmatrix}
a &  b \\
c &  d 
\end{psmallmatrix}  \in SL_2(\mathbb{Z})$ , we define $\chi(\gamma)=\chi(d)$ (and note that $\overline{\chi(d)}= \chi(a)$). A cusp $u$ is said to be singular for $\chi$ if $\chi$ is trivial on $\Gamma_u$. 
We note that a Dirichlet character $\chi$ is always trivial on  $\Gamma_\infty$ and, therefore, the cusp $\infty$ is singular for any $\chi$.

We set $r,s$ to be integers throughout and, for $\gamma = \begin{psmallmatrix}
a &  b \\
c &  d 
\end{psmallmatrix} \in GL^+(2,\mathbb{R})=\{\gamma\in GL(2,\mathbb{R}):\det(\gamma)>0\}$, we define $j(\gamma, z) \coloneqq (cz+d)$. For a function $f\colon \mathbb{H} \to \mathbb{C}$, the double slash operator $f||_{r,s}$ is then defined by
\begin{equation*}
    (f ||_{r,s}\gamma)(z) \coloneqq \det(\gamma)^{\frac{r+s}{2}}j(\gamma, z)^{-r}j(\gamma, \bar z)^{-s}f(\gamma z), \quad \  \forall \ z \in \mathbb{H}.
\end{equation*}
This is a group action, and therefore, we have \begin{equation*}
f||_{r,s} (\gamma \mu) = (f||_{r,s} \gamma)||_{r,s} \, \mu \, ,\quad \forall \ \gamma,  \mu \in GL^+(2,\mathbb{R}).
\end{equation*}

\section{Properties of real-analytic modular forms for \texorpdfstring{$\Gamma_0(N)$}{Gamma(N)}}\label{secprops}

\subsection{Key definitions}\label{secdef}

We are now able to study the space of real-analytic modular forms for $\Gamma_0(N)$. We start by giving the definition of such forms, and then spend the rest of the section studying their fundamental properties and characteristics.
Unless stated otherwise, we will always assume $\chi$ is a Dirichlet character modulo $N$. 

\begin{definition}\label{ramfdef}
    Let $N\in \mathbb{N}$ and $\chi$ be a character. We say a real-analytic function $F: \mathbb{H} \to \mathbb{C}$ is a real-analytic modular form of weights $(r,s)$ for $\Gamma_0(N)$ with character $\chi$ if it satisfies the following three conditions: 
    \begin{align}
(i)\quad \  & F(\gamma z)=\chi(d)j(\gamma,z)^r j(\gamma, \bar z)^s F(z), \quad \ \forall \ \gamma \in \Gamma_0(N) \text{ and } z \in \mathbb{H}. \label{transform}
\\[1mm]
(ii)\quad \  &  \text{At the cusp $\infty$ we have an expansion of the form} \nonumber \\
& \hspace{15mm} F(z)=\sum_{|j| \le N_0}y^j\left ( \sum_{m, n \ge -N_0'} a_{m, n}^{(j)} \, q^m \overline{q}^n\right ) \!,  \label{fourier}\\&
\text{where $N_0,N_0'\in\mathbb{N}$, $a_{m,n}^{(j)}\in\mathbb{C}$, and $q \coloneqq e^{2\pi i z}$.} \nonumber\\[1mm]
(iii)\quad \ &  \text{For all $\gamma\in SL_2(\mathbb{Z})$ there is a $C>0$ such that}\nonumber \\
& \hspace{15mm} (F||_{r,s}\gamma)(x+iy)=O(e^{Cy})\\&
\text{as $y\rightarrow\infty$, uniformly in $x$. We will refer to this as the growth of $F$ \emph{at the cusps}}. \nonumber
\\ \nonumber
    \end{align}
\end{definition}

We denote the space of such real-analytic modular forms by $\mathcal{M}^!_{r,s}(\Gamma_0(N), \chi)$ and set $\mathcal{M}^!(\Gamma_0(N), \chi) \! =\! \bigoplus_{r,s}\mathcal{M}^!_{r,s}(\Gamma_0(N), \chi)$. The notation $\mathcal{M}^!_{r,s}(\Gamma_0(N))$ and $\mathcal{M}^!(\Gamma_0(N))$ will be used throughout to refer to those spaces where $\chi$ is trivial. Since $\begin{psmallmatrix}
-1 &  0 \\
0 &  -1  
\end{psmallmatrix} \in \Gamma_0(N)$, we will assume that $\chi(-1)=(-1)^{r+s}$.

If we restrict to the condition that the $a^{(j)}_{m,n}$ vanish if either $m$ or $n$ are negative, then we have a new (sub)space denoted by $\mathcal{M}_{r,s}\NX$.

Let $A$ be a complete set of inequivalent cusps for $\Gamma_0(N)$, if the expansion of $F(z) \in \mathcal{M}^!_{r,s}(\Gamma_0(N), \chi)$ at any singular cusp $u \in A$ can be given by
\begin{equation}\label{fourierext}
   F(\sigma_u z)j(\sigma_u, z)^{-r}j(\sigma_u, \bar z)^{-s}=\sum_{|j| \le N_0}y^j\left ( \sum_{m, n \ge -N_0'} a_{m, n}^{u,j} \,   q^m \overline{q}^n \right ) \!,
\end{equation}
 for some $N_0,N_0' \in \mathbb{N}$ and $a_{m, n}^{u,j} \in \mathbb{C}$, then 
 condition $(iii)$ of Definition \ref{ramfdef} is implied. The above expansion reduces to \eqref{fourier} when taking $\sigma_u = \sigma_\infty = \begin{psmallmatrix}
    1 & 0 \\
    0 & 1
\end{psmallmatrix}$ and, as we can assume that $A$ includes the cusp at $\infty$, condition $(ii)$ is also implied. We will refer to these expansions as being `at the cusps'.

As with the classical case of $\mathcal{M}^!_{r,s}(SL_2(\mathbb{Z}))$, we equip the space $\mathcal{M}^!_{r,s}(\Gamma_0(N), \chi)$ with a pair of operators given by 
\begin{equation}\label{partial}
    \partial_r = 2i \, \text{Im}(z) \dfrac{\partial}{\partial z} +r \qquad \text{and} \qquad \overline{\partial}_s = -2i \, \text{Im}(z) \dfrac{\partial}{\partial \bar{z}} +s.
\end{equation}
We have the following two lemmas from \cite{brown1} (and an additional corollary):

\begin{lemma}[Lemma 2.5 of \cite{brown1}]\label{oplem1}
For all $\gamma \in SL_2(\mathbb{R})$, $z \in \mathbb{H}$ and real analytic $F: \mathbb{H} \to \mathbb{C}$, we have
\begin{align*}
  &  \partial_r \left(j(\gamma,z)^{-r}F(\gamma z)\right) = 
\, j(\gamma,z)^{-r-1}j(\gamma, \bar{z})(\partial_r F)(\gamma z), \\
  &  \overline{\partial}_s \left(j(\gamma,\bar{z})^{-s}F(\gamma z)\right) = \, j(\gamma,z)j(\gamma, \bar{z})^{-s-1}(\overline{\partial}_s F)(\gamma z).
\end{align*}
\end{lemma}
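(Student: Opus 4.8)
The plan is to verify both identities by a direct computation with the Wirtinger derivatives $\partial_z=\tfrac{\partial}{\partial z}$, $\partial_{\bar z}=\tfrac{\partial}{\partial\bar z}$, using only a short list of elementary facts about the M\"obius action of $\gamma=\begin{psmallmatrix}a&b\\c&d\end{psmallmatrix}$ with $ad-bc=1$. Namely: \emph{(a)} $\gamma z$ is holomorphic in $z$, so $\partial_{\bar z}(\gamma z)=0$ and $\partial_z(\gamma z)=j(\gamma,z)^{-2}$ (and, since $\overline{\gamma z}=\gamma\bar z$, likewise $\partial_{\bar z}(\gamma\bar z)=j(\gamma,\bar z)^{-2}$); \emph{(b)} $\partial_z j(\gamma,z)=c$ while $\partial_z j(\gamma,\bar z)=0$, and conjugately; \emph{(c)} $\text{Im}(\gamma z)=y\,|j(\gamma,z)|^{-2}=y\,j(\gamma,z)^{-1}j(\gamma,\bar z)^{-1}$, where $y=\text{Im}(z)$; and \emph{(d)} the cocycle-difference identity $j(\gamma,z)-j(\gamma,\bar z)=c(z-\bar z)=2iyc$. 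Everything else is the chain rule, and \emph{(d)} is the only identity that does any work.

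For the first identity I would expand $\partial_r\bigl(j(\gamma,z)^{-r}F(\gamma z)\bigr)$ by the product and chain rules, using $\partial_z\bigl(F(\gamma z)\bigr)=(\partial_z F)(\gamma z)\,j(\gamma,z)^{-2}$ from \emph{(a)}, to obtain
\[
\partial_r\bigl(j(\gamma,z)^{-r}F(\gamma z)\bigr)=2iy\Bigl[-rc\,j(\gamma,z)^{-r-1}F(\gamma z)+j(\gamma,z)^{-r-2}(\partial_z F)(\gamma z)\Bigr]+r\,j(\gamma,z)^{-r}F(\gamma z).
\]
Separately, expanding the asserted right-hand side and using \emph{(c)} to write $(\partial_r F)(\gamma z)=2iy\,j(\gamma,z)^{-1}j(\gamma,\bar z)^{-1}(\partial_z F)(\gamma z)+rF(\gamma z)$ gives
\[
j(\gamma,z)^{-r-1}j(\gamma,\bar z)(\partial_r F)(\gamma z)=2iy\,j(\gamma,z)^{-r-2}(\partial_z F)(\gamma z)+r\,j(\gamma,z)^{-r-1}j(\gamma,\bar z)F(\gamma z).
\]
The $(\partial_z F)$-terms match identically, and after cancelling the common factor the remaining $F(\gamma z)$-terms match precisely because $j(\gamma,z)-2iyc=j(\gamma,\bar z)$, which is \emph{(d)}. (When $r=0$ both sides vanish and there is nothing to check.) This settles the first formula.

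The second formula is proved by the same computation with the roles of $z$ and $\bar z$ interchanged: replace $r$ by $s$ and $\partial_z$ by $\partial_{\bar z}$, use $\partial_{\bar z}\bigl(F(\gamma z)\bigr)=(\partial_{\bar z}F)(\gamma z)\,j(\gamma,\bar z)^{-2}$, and keep track of the sign flips coming from the $-2i\,\text{Im}(z)$ in the definition of $\overline{\partial}_s$; the computation again collapses onto \emph{(d)} in the form $j(\gamma,\bar z)+2iyc=j(\gamma,z)$. I do not anticipate any genuine obstacle: the proof is purely mechanical, and the only points requiring care are the sign bookkeeping in the $\overline{\partial}_s$ case and not losing track of which argument ($z$ or $\bar z$) each factor of $j$ depends on.
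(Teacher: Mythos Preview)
Your computation is correct: the chain rule, the identity $\partial_z(\gamma z)=j(\gamma,z)^{-2}$, the formula $\text{Im}(\gamma z)=y\,j(\gamma,z)^{-1}j(\gamma,\bar z)^{-1}$, and the difference $j(\gamma,z)-j(\gamma,\bar z)=2iyc$ are exactly the ingredients needed, and you have assembled them cleanly. The paper itself does not supply a proof of this lemma; it is quoted from Brown's paper and simply cited, so there is nothing to compare against beyond noting that your direct verification is the standard (and essentially the only) argument for such an identity.
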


\begin{corollary}\label{opcor}
    For all $\gamma \in SL_2(\mathbb{R})$, $z \in \mathbb{H}$ and real analytic $F: \mathbb{H} \to \mathbb{C}$, we have
    \begin{align*}
        \partial_r ((F||_{r,s} \gamma)(z)) = (( \partial_r F)||_{r+1,s-1} \gamma )(z),
        \\
        \overline{\partial}_s ((F||_{r,s} \gamma)(z)) = (( \overline{\partial}_s F)||_{r-1,s+1} \gamma )(z).
    \end{align*}
\end{corollary}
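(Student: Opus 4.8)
The plan is to derive the corollary directly from Lemma \ref{oplem1} by unwinding the definition of the double slash operator and bookkeeping the automorphy factors. First I would write out $(F||_{r,s}\gamma)(z) = \det(\gamma)^{(r+s)/2} j(\gamma,z)^{-r} j(\gamma,\bar z)^{-s} F(\gamma z)$, and apply $\partial_r$ to it. The scalar $\det(\gamma)^{(r+s)/2}$ and the factor $j(\gamma,\bar z)^{-s}$ are holomorphic-in-$\bar z$ only in the wrong variable; more precisely $\partial_r$ acts as $2i\,\mathrm{Im}(z)\,\partial_z + r$, and since $j(\gamma,\bar z)$ does not depend on $z$, it passes through the $\partial_z$ part untouched, but I must be careful that the multiplication operator ``$+r$'' interacts correctly. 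The clean way is to group $j(\gamma,\bar z)^{-s}\det(\gamma)^{(r+s)/2}$ out front as a $z$-independent constant (for fixed $\bar z$) and observe that $\partial_r$ is $\C$-linear over functions of $\bar z$ alone in the sense that $\partial_r(g(\bar z) h(z)) = g(\bar z)\,\partial_r(h(z))$ — this holds because $2i\,\mathrm{Im}(z)\partial_z$ kills $g(\bar z)$-dependence and the $+r$ multiplication is scalar. Hence $\partial_r\big((F||_{r,s}\gamma)(z)\big) = \det(\gamma)^{(r+s)/2} j(\gamma,\bar z)^{-s} \,\partial_r\big(j(\gamma,z)^{-r} F(\gamma z)\big)$.

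Next I would apply the first identity of Lemma \ref{oplem1}, which gives $\partial_r\big(j(\gamma,z)^{-r} F(\gamma z)\big) = j(\gamma,z)^{-r-1} j(\gamma,\bar z)\,(\partial_r F)(\gamma z)$. Substituting this in yields
\begin{equation*}
\partial_r\big((F||_{r,s}\gamma)(z)\big) = \det(\gamma)^{(r+s)/2} j(\gamma,z)^{-r-1} j(\gamma,\bar z)^{-s+1} (\partial_r F)(\gamma z).
\end{equation*}
Now I compare this with $\big((\partial_r F)||_{r+1,s-1}\gamma\big)(z) = \det(\gamma)^{(r+1+s-1)/2} j(\gamma,z)^{-(r+1)} j(\gamma,\bar z)^{-(s-1)} (\partial_r F)(\gamma z)$. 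Since $(r+1)+(s-1) = r+s$, the determinant powers match, and the exponents $-(r+1) = -r-1$ and $-(s-1) = -s+1$ match as well, so the two expressions are identical. This proves the first formula. Note one technical point worth checking: Lemma \ref{oplem1} is stated for $\gamma \in SL_2(\mathbb{R})$ (where $\det\gamma = 1$), so to obtain the $GL^+(2,\mathbb{R})$ version I would either restrict to $SL_2$ — which suffices for all later applications — or observe that $\partial_r$ is invariant under scaling $\gamma \mapsto \lambda\gamma$ together with the normalization factor $\det(\gamma)^{(r+s)/2}$, reducing the general case to the unimodular one.

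The second formula follows by the entirely symmetric argument with the roles of $z$ and $\bar z$ interchanged: apply $\overline{\partial}_s = -2i\,\mathrm{Im}(z)\partial_{\bar z} + s$, pull out the $z$-independent-in-$\bar z$... rather, the $\bar z$-independent factor $j(\gamma,z)^{-r}$, invoke the second identity of Lemma \ref{oplem1} to get $j(\gamma,z) j(\gamma,\bar z)^{-s-1}(\overline{\partial}_s F)(\gamma z)$, and match exponents against $\big((\overline{\partial}_s F)||_{r-1,s+1}\gamma\big)(z)$; again $(r-1)+(s+1) = r+s$ and the exponents align. I do not expect any genuine obstacle here: the only subtlety is the ``passing the operator through a factor depending on the other variable'' step, which is why I would state it explicitly as the linearity property $\partial_r(g(\bar z)h(z)) = g(\bar z)\partial_r h(z)$ (and its conjugate) before plugging into the lemma.
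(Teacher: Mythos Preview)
Your proposal is correct and is exactly the intended argument: the paper states the corollary without proof, leaving it as an immediate consequence of Lemma~\ref{oplem1}, and you have spelled out precisely that deduction. The only point worth noting is that since the corollary is stated for $\gamma \in SL_2(\mathbb{R})$, the determinant factor is $1$ throughout and your bookkeeping of $\det(\gamma)^{(r+s)/2}$, while correct, is not strictly needed.
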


\begin{lemma}[Lemma 2.6 of \cite{brown1}]\label{oplem2}
    The operators $\partial_r, \overline{\partial}_s$ preserve the expansion appearing in the RHS of equation \eqref{fourier}. Their action is given explicitly by
    \begin{align*}
        & \partial_r(y^j q^m \bar{q}^n ) = (-4\pi m y+r+j) y^jq^m\bar{q}^n, \\
        & \overline{\partial}_s(y^j q^m \bar{q}^n ) = (-4\pi n y+s+j) y^jq^m\bar{q}^n.
    \end{align*}
\end{lemma}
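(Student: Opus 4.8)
The plan is to verify the two explicit formulas by a direct computation on a single monomial $y^j q^m \overline{q}^n$, and then deduce the statement that $\partial_r$ and $\overline{\partial}_s$ preserve the expansion \eqref{fourier} by linearity, together with a routine justification of term-by-term differentiation.

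First I would record the elementary identities. Writing $y = \text{Im}(z) = (z-\bar z)/(2i)$ gives $\partial y/\partial z = 1/(2i)$ and $\partial y/\partial \bar z = -1/(2i)$; moreover $\partial q^m/\partial z = 2\pi i m\, q^m$, $\partial q^m/\partial \bar z = 0$, $\partial \overline{q}^n/\partial \bar z = -2\pi i n\, \overline{q}^n$, and $\partial \overline{q}^n/\partial z = 0$. Applying the product rule and chain rule then yields
\[
\frac{\partial}{\partial z}\bigl(y^j q^m \overline{q}^n\bigr) = \frac{j}{2i}\, y^{j-1} q^m \overline{q}^n + 2\pi i m\, y^j q^m \overline{q}^n,
\]
and substituting this into $\partial_r = 2iy\,\partial/\partial z + r$ and simplifying (using $2i\cdot 2\pi i = -4\pi$) produces the claimed formula $\partial_r(y^j q^m \overline{q}^n) = (-4\pi m y + r + j)\, y^j q^m \overline{q}^n$. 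The computation for $\overline{\partial}_s$ is entirely analogous: one starts instead from $\partial(y^j q^m \overline{q}^n)/\partial \bar z = -\tfrac{j}{2i} y^{j-1} q^m \overline{q}^n - 2\pi i n\, y^j q^m \overline{q}^n$ and feeds it into $\overline{\partial}_s = -2iy\,\partial/\partial \bar z + s$.

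For the preservation claim, observe that the right-hand side of each formula is again a finite $\mathbb{C}$-linear combination of monomials $y^{j'} q^{m} \overline{q}^{n}$ with $j' \in \{j, j+1\}$ and with the exponents $m,n$ unchanged. Hence applying $\partial_r$ or $\overline{\partial}_s$ term by term to an expansion of the shape \eqref{fourier} again yields an expansion of that shape, with $N_0$ possibly replaced by $N_0+1$ and $N_0'$ unchanged. The only point requiring a word of justification is the legitimacy of differentiating the series term by term: since $F$ is real-analytic on $\mathbb{H}$, the expansion \eqref{fourier} and its formal derivative both converge absolutely and locally uniformly on $\mathbb{H}$, so term-by-term differentiation is valid. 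I do not anticipate any real obstacle here — the entire argument is a short explicit computation — the only mild care needed is the bookkeeping of the convergence of the doubly-indexed $q$-expansion and the shift $N_0 \mapsto N_0+1$.
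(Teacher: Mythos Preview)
Your proposal is correct and is the natural direct computation; the paper itself does not supply a proof but simply cites this as Lemma~2.6 of \cite{brown1}, where the same elementary verification is carried out. There is nothing to add beyond noting that your bookkeeping (the shift $N_0\mapsto N_0+1$, unchanged $N_0'$, and term-by-term differentiation via local uniform convergence) is exactly what is needed.
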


Therefore, $\partial_r$ and $\overline{\partial}_s$ induce bigraded operators $ \partial, \overline{\partial}$ on real-analytic functions $F:\mathbb{H}\rightarrow\mathbb{C}$ of weight $(r,s)$ of bidegree $(1,-1)$ and $(-1,1)$, respectively. If $F$ has weight $(r,s)$, then $\partial$ acts on $F$ via $\partial_r$ and similarly $\overline{\partial}$ acts via $\overline{\partial}_s$. Unfortunately, condition $(iii)$ of Definition $\ref{ramfdef}$ is not in general preserved by the $ \partial,  \overline{\partial}$ operators.

We give the following proposition relating to the kernel of the $\partial$ operator, which is an extension of Proposition 3.2 of\cite{brown1}:

\begin{proposition}\label{unique}
    If $F \in \mathcal{M}_{r,s}\NX$ such that $\partial_r F = 0$, then 
    \begin{equation*}
        y^r F \in \overline{M}_{s-r}\NX,
    \end{equation*}
    where $\overline{M}_{s-r}\NX$ denotes the space of anti-holomorphic forms of weight $s-r$ for $\Gamma_0(N)$ with character $\chi$. If $r > s$ then $F$ vanishes, and if $r=s$ then $F \in \mathbb{C} y^{-r}$.
\end{proposition}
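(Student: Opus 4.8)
The plan is to follow the strategy of Proposition 3.2 of \cite{brown1}, adapting the argument to level $N$ and character $\chi$. First I would show that $\partial_r F = 0$ forces $F$ to take the shape $F(z) = y^{-r} g(\bar z)$ for some anti-holomorphic function $g$. Since $F \in \mathcal{M}_{r,s}\NX$, by definition its expansion at $\infty$ has the form \eqref{fourier} with $a^{(j)}_{m,n} = 0$ whenever $m<0$ or $n<0$. Using the explicit action in Lemma \ref{oplem2}, namely $\partial_r(y^j q^m \bar q^n) = (-4\pi m y + r + j)\, y^j q^m \bar q^n$, the condition $\partial_r F = 0$ is read off coefficient-by-coefficient: the monomials $y^{j} q^m \bar q^n$ contribute a term with $y^{j+1}$ (from $-4\pi m y$) and a term with $y^j$ (from $r+j$). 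Matching powers of $y$ forces $m\,a^{(j)}_{m,n}=0$ and $(r+j)\,a^{(j)}_{m,n} + (\text{contribution from }j{-}1) = 0$; carefully bookkeeping these relations shows $a^{(j)}_{m,n}=0$ unless $m=0$ and $j = -r$. Hence $F(z) = y^{-r}\sum_{n\ge 0} a^{(-r)}_{0,n} \bar q^{\,n} =: y^{-r} g(\bar z)$ with $g$ anti-holomorphic and holomorphic at $\infty$.

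Next I would translate the modular transformation law for $F$ into one for $g$. Applying $\partial_r$ and Corollary \ref{opcor}, the equation $\partial_r F = 0$ is $\Gamma_0(N)$-invariant in the appropriate sense, but more directly: substitute $F(z) = y^{-r} g(\bar z)$ into condition (i) of Definition \ref{ramfdef}. Using $\mathrm{Im}(\gamma z) = y/|cz+d|^2 = y\, j(\gamma,z)^{-1} j(\gamma,\bar z)^{-1}$, the factor $\mathrm{Im}(\gamma z)^{-r}$ produces $y^{-r} j(\gamma,z)^{r} j(\gamma,\bar z)^{r}$, and comparing with the required automorphy factor $\chi(d) j(\gamma,z)^{r} j(\gamma,\bar z)^{s}$ leaves exactly $g(\overline{\gamma z}) = \chi(d)\, j(\gamma,\bar z)^{s-r} g(\bar z)$ for all $\gamma \in \Gamma_0(N)$. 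Since $g$ is anti-holomorphic and holomorphic at $\infty$ (and, by condition (iii) applied through the $\sigma_u$-expansions \eqref{fourierext}, of moderate growth at all cusps), this says precisely that $g$ is an anti-holomorphic modular form of weight $s-r$ for $\Gamma_0(N)$ with character $\chi$, i.e. $y^r F = g \in \overline{M}_{s-r}\NX$.

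The remaining two assertions are then standard facts about the finite-dimensional space $\overline{M}_{s-r}\NX$ pulled back from the classical theory of holomorphic modular forms for $\Gamma_0(N)$: if $r > s$ the weight $s-r$ is negative, so $M_{s-r}\NX = 0$ (there are no nonzero holomorphic modular forms of negative weight on a congruence subgroup, as the valence formula forces a nonnegative total weight), whence $g = 0$ and $F=0$; and if $r=s$ the weight is $0$, so $M_0\NX$ consists only of the constants, giving $g \in \mathbb{C}$ and $F \in \mathbb{C} y^{-r}$.

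I expect the main obstacle to be the first step: cleanly extracting from $\partial_r F = 0$ that only the single monomial family $y^{-r}\bar q^{\,n}$ survives. The coefficient recursion from Lemma \ref{oplem2} couples the $j$-graded pieces (the $r+j$ term shifts the $y$-degree by one relative to the $-4\pi m y$ term), so one must argue by descent on $j$ — starting from the top index $j = N_0$ and working down, using at each stage that the $m \neq 0$ coefficients already vanish — to conclude that the only index where coefficients can be nonzero is $j=-r$ with $m=0$. Once $F = y^{-r} g(\bar z)$ is established, the rest is a routine manipulation of automorphy factors plus citation of classical dimension facts; the level $N$ and character $\chi$ play no essential role beyond being carried along unchanged, exactly as in the $SL_2(\mathbb{Z})$ case of \cite{brown1}.
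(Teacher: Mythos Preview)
Your proposal is correct and reconstructs precisely the argument that the paper invokes by citation: the paper's own proof simply states that ``the same proof used for Proposition 3.2 of \cite{brown1} holds here since there are no non-zero holomorphic forms of negative weight for $\Gamma_0(N)$ and character $\chi$,'' together with $M_0\NX = \mathbb{C}$. You have written out in full the borrowed argument --- the coefficient descent via Lemma~\ref{oplem2} to reach $F = y^{-r} g(\bar z)$, the automorphy-factor manipulation using $\mathrm{Im}(\gamma z) = y\, j(\gamma,z)^{-1} j(\gamma,\bar z)^{-1}$, and the appeal to classical dimension facts in negative and zero weight --- so the approaches coincide.

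One small wording slip: condition (iii) of Definition~\ref{ramfdef} gives $O(e^{Cy})$ growth, not ``moderate growth''; but for an anti-holomorphic $g$ this still bounds the principal part of the $\bar q$-expansion at each cusp, which is all you need here.
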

\begin{proof}
    The same proof used for Proposition 3.2 of\cite{brown1} holds here since there are no non-zero holomorphic forms of negative weight for $\Gamma_0(N)$ and character $\chi$. The final part is due to the fact that that $M_0\NX = \mathbb{C}$.
\end{proof}

This proposition means that the solution to $\partial F = G$, for $F,G \in \mathcal{M}_{r,s}\NX$, is unique if $r>s$ and unique up to an addition of $g \in \mathbb{C}y^{-r}$ if $r=s$. We can obtain an analogous result for $\overline{\partial}$.

 We can use a combination of the $\partial_r$ and $\overline{\partial}_s$ operators to construct the Laplacian $\Delta_{r,s}$. This is defined for all integers $r,s$ by
\begin{equation}\label{lap}
\Delta_{r,s} = -\overline{\partial}_{s-1}\partial_r + r(s-1) = - \partial_{r-1} \overline{\partial}_s + s(r-1).
\end{equation}
which induces a bigraded operator $\Delta$ of bidegree $(0,0)$ on real-analytic functions of weight $(r,s)$.

Now that we have introduced real-analytic modular forms for $\Gamma_0(N)$ and the associated operators $\partial$, $\overline{\partial}$  and $\Delta$, we are in a good position to study the real-analytic Eisenstein Series.



\subsection{Real-analytic Eisenstein series}

The classical holomorphic Eisenstein series for $SL_2(\mathbb{Z})$, defined for all even $r \geq 4$ by
\begin{equation*}
    \mathbb{G}(z,r) := \dfrac{(r-1)!}{2(2\pi i)^{r}}  \sum_{\substack{(m,n)\in \mathbb{Z}^2 \\ (m,n)\neq (0,0)}} (mz+n)^{-r} = \dfrac{\zeta(r)(r-1)!}{(2\pi i)^{r}} \sum_{\gamma \in \Gamma_\infty \backslash SL_2(\mathbb{Z})} j(\gamma, z)^{-r},
\end{equation*}
 naturally gives rise to a more general definition when studying the congruence subgroup $\Gamma_0(N)$. Indeed, we can define the holomorphic Eisenstein series associated to a singular cusp $u$ of $\Gamma_0(N)$ and a character $\chi$ by
\begin{equation}\label{Eish}
    \mathbb{G}_{u}(z,r,\chi) \coloneqq \dfrac{\zeta(r)(r-1)!}{(2\pi i)^{r}} \sum_{\gamma \in \Gamma_u \backslash \Gamma_0(N)} \dfrac{ \overline{\chi}(\gamma)}{j(\sigma^{-1}_u\gamma, z)^{r}}.
 \end{equation}
This series is defined for any integer $r \geq 3$ provided that $j(\sigma_u \gamma \sigma^{-1}_u, z)^r=1$ for all $\gamma \in \Gamma_u$, when $r$ is even this is guaranteed to hold.

We also have the non-holomorphic Eisenstein series associated to a singular cusp $u$ and character $\chi$, defined for all $\text{Re}(r)>1$ by
\begin{equation}\label{Eisnh}
E_u(z,r,\chi) \coloneqq \zeta(2r)\sum_{\gamma \in \Gamma_u \backslash \Gamma_0(N)} \overline{\chi}(\gamma)(\text{Im}(\sigma^{-1}_u \gamma z))^r.
\end{equation}

Both of these objects have already undergone intense study in previous literature and have a variety of applications (an interested reader could see \cite{topics,miyakebook,iwaniecspectral, cohen1,goldsteindedekind,selberg1956,liuwang}, for example, to learn more). Our reason for introducing them here, however, is to see how they relate to the real-analytic Eisenstein series for $\Gamma_0(N)$, which we will define below.
Indeed, as in the classical case above, we wish to generalise the real-analytic Eisenstein series for $SL_2(\mathbb{Z})$, defined by Brown in \cite{brown1} and given by
\begin{equation}\label{Eisb}
\mathcal{E}(z,r,s) \coloneqq \dfrac{w!}{(2\pi i)^{w+1}} \dfrac{1}{2}\sum_{\substack{(m,n)\in \mathbb{Z}^2 \\ (m,n)\neq (0,0)}}  \dfrac{i\, \text{Im}(z)}{(mz+n)^{r+1}(m\bar z + n)^{s+1}},
\end{equation}
for $r,s \geq 0$ and $w \coloneqq r+s \geq 2$ and even, to the congruence subgroup $\Gamma_0(N)$. As a way of unifying all of the above Eisenstein series, we will also show how our function relates to \eqref{Eisnh} and satisfies differential equations involving \eqref{Eish}.

\begin{definition}\label{Eisdef}
    Fix a congruence subgroup $\Gamma_0(N)$ and let $r,s \geq 0$ be integers such that $w \coloneqq r+s \geq 2$ and $j(\sigma_u \gamma \sigma^{-1}_u, z)^{w
    }=1$. Then we define the real-analytic Eisenstein series associated to a singular cusp $u$ and a character $\chi$ by
    \begin{equation}\label{EisA}
        \mathcal{E}_{u}(z,r,s,\chi)   \coloneqq \dfrac{w! \, \zeta(w+2)}{(2\pi i)^{w+1}} \sum_{\gamma \in \Gamma_u \backslash \Gamma_0(N)} \dfrac{ \overline{\chi}(\gamma) \: \! i \: \! \text{Im}(z) }{j(\sigma^{-1}_u \gamma, z)^{r+1}j(\sigma^{-1}_u \gamma, \bar{z})^{s+1}}.
    \end{equation}
\end{definition}

\begin{proposition}\label{eisprop}
Fix a congruence subgroup $\Gamma_0(N)$, a character $\chi$ and a singular cusp $u$ of $\Gamma_0(N)$. For every $w \geq 2$ such that $j(\sigma_u \gamma \sigma^{-1}_u, z)^w=1$, there exists a unique set of functions in $\mathcal{M}_{r,s}\NX$, with $r,s \geq 0$ and $r+s=w$, which satisfies the following equations:
\begin{align}
    \partial \mathcal{E}_{u}(z,w,0,\chi) =-2\pi
    \emph{Im}(z)\mathbb{G}_{u}(z,w+2,\chi)&, \label{ediff1}
    \\[1mm]
    \partial \mathcal{E}_{u}(z,r,s,\chi) - (r+1)\mathcal{E}_{u}(z,r+1,s-1,\chi)=0&,\ \hspace{10mm} \text{for } s\geq 1. \label{ediff2}
    \\[4mm]
    \overline{\partial} \mathcal{E}_u (z,0,w, \chi) =-2\pi \emph{Im}(z)\overline{\mathbb{G}}_{u} (z,w+2,\chi)&, \label{ediff3}
    \\[1mm]
    \overline{\partial}  \mathcal{E}_{u}(z,r,s,\chi) - (s+1)\mathcal{E}_u (z,r-1,s+1,\chi)=0&,\ \hspace{10mm} \text{for } r\geq 1. \label{ediff4}
\end{align}
This set of functions is given exactly by the above Eisenstein series $\mathcal{E}_{u}(z,r,s,\chi)$.
\end{proposition}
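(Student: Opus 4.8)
The plan is to verify the differential equations directly from the defining series \eqref{EisA}, using the operator identities already available, and then to deduce uniqueness from Proposition \ref{unique}. First I would check that each $\mathcal{E}_u(z,r,s,\chi)$ with $r,s\geq 0$, $r+s=w$, actually lies in $\mathcal{M}_{r,s}\NX$: the transformation law \eqref{transform} follows from the standard cocycle computation, using that $\gamma\mapsto\overline\chi(\gamma)$ and the $\Gamma_u$-invariance of the summand (guaranteed by the hypothesis $j(\sigma_u\gamma\sigma_u^{-1},z)^w=1$, which also makes the series well-defined on $\Gamma_u\backslash\Gamma_0(N)$); convergence for $w\geq 2$ is the usual Eisenstein estimate, giving also the growth/Fourier conditions, and the non-negative Fourier support (so that we land in $\mathcal{M}_{r,s}$, not just $\mathcal{M}_{r,s}^!$) comes from computing the Fourier expansion at each cusp, e.g.\ by the standard unfolding against the completed zeta factor $w!\,\zeta(w+2)/(2\pi i)^{w+1}$.

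Next I would establish \eqref{ediff2} and \eqref{ediff4}, which are the ``easy'' relations. The key point is to apply $\partial_r$ term by term. By Lemma \ref{oplem1} (or directly), for each coset representative $\gamma$ with $\delta:=\sigma_u^{-1}\gamma$,
\begin{equation*}
\partial_{r}\!\left(\frac{i\,\mathrm{Im}(z)}{j(\delta,z)^{r+1}j(\delta,\bar z)^{s+1}}\right)
= j(\delta,z)^{-r-2}j(\delta,\bar z)^{-s}\,(\partial_{r}\Phi)(\delta z),
\end{equation*}
where $\Phi(z)=i\,\mathrm{Im}(z)=y$ has the property $\partial_1 y = 0\cdot y + 1 = 1$ when acting with weight... more carefully, one computes $2i\,\mathrm{Im}(z)\partial_z(y) = 2i y\cdot\frac{1}{2i} = y$, so $\partial_{r}(y) = y + r y = (r+1)y$ on the relevant weight. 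Hence $\partial$ applied to the $(r,s)$-summand produces $(r+1)$ times the $(r+1,s-1)$-summand, and summing over cosets gives exactly \eqref{ediff2}; the lowering relation \eqref{ediff4} is identical with $\overline\partial_s$ and $\partial_{\bar z}(y)=-\frac{1}{2i}$. Then \eqref{ediff1} and \eqref{ediff3} are the boundary cases $s=0$ (resp.\ $r=0$): here applying $\partial$ kills the $\mathrm{Im}(z)$ numerator in the sense that one is left with $\partial_w$ acting on something proportional to $y\cdot j(\delta,z)^{-w-1}j(\delta,\bar z)^{-1}$, and a short computation shows the $j(\delta,\bar z)^{-1}$ and the $y$ combine so that the result is $-2\pi\,\mathrm{Im}(z)$ times $j(\delta,z)^{-w-2}$, i.e.\ $-2\pi\,\mathrm{Im}(z)$ times the summand of $\mathbb{G}_u(z,w+2,\chi)$ up to the matching constant $w!\,\zeta(w+2)/(2\pi i)^{w+1}$ versus $\zeta(w+2)(w+1)!/(2\pi i)^{w+2}$ — checking these constants line up (the extra factor $(w+1)/(2\pi i)$ against the $-2\pi$) is the one place to be careful.

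Finally, uniqueness: suppose $\{\mathcal{F}_{r,s}\}$ is another family in $\mathcal{M}_{r,s}\NX$ satisfying \eqref{ediff1}--\eqref{ediff4}. Set $D_{r,s}=\mathcal{F}_{r,s}-\mathcal{E}_u(z,r,s,\chi)\in\mathcal{M}_{r,s}\NX$. Then $D$ satisfies the homogeneous versions: $\partial D_{w,0}=0$, $\partial D_{r,s}=(r+1)D_{r+1,s-1}$, and the $\overline\partial$-analogues. From $\partial D_{w,0}=0$ and Proposition \ref{unique}, since $w>0$ we get $D_{w,0}=0$ (as $w>0=s$, or if one prefers, $y^w D_{w,0}$ would be antiholomorphic of negative weight $-w<0$ hence zero). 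Now induct downward on $r$: if $D_{r+1,s-1}=0$ then $\partial D_{r,s}=0$, and since $r\geq 0$... here I need $r>s$ or $r=s$; when $r>s$, Proposition \ref{unique} gives $D_{r,s}=0$ directly, and the symmetric argument using $\overline\partial D_{0,w}=0$ and the lowering relations handles $r<s$, while the single middle case $r=s=w/2$ is pinned down by \emph{both} $\partial D_{r,s}$ and $\overline\partial D_{r,s}$ being determined (one gives $D_{r,s}\in\mathbb{C}y^{-r}$, and then $\overline\partial$ of $y^{-r}$ is $(s-r)y^{-r}=0$... so I must instead use that $D_{r+1,s-1}=0$ forces, via $\overline\partial D_{r+1,s-1}=(s)D_{r,s}$ read backwards, wait — rather $\overline\partial D_{r,s}=(s+1)D_{r-1,s+1}$, and $\partial D_{r,s}=(r+1)D_{r+1,s-1}=0$; combining with the Laplacian relation \eqref{lap} should collapse it). The main obstacle I anticipate is precisely this bookkeeping at the central weight $r=s$: disentangling which homogeneous relations are available there and confirming they force $D_{r,s}=0$ rather than leaving a stray multiple of $y^{-r}$ — one likely resolves it by noting that any such $y^{-r}$ term would propagate under the raising relations to a nonzero element of $D_{w,0}$, contradicting $D_{w,0}=0$.
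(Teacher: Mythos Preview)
Your proposal is essentially correct and follows the paper's approach for the differential equations: the paper proves \eqref{ediff2} via the termwise identity
\[
\partial_r\!\left(\frac{z-\bar z}{(cz+d)^{r+1}(c\bar z+d)^{s+1}}\right)=(r+1)\,\frac{z-\bar z}{(cz+d)^{r+2}(c\bar z+d)^{s}},
\]
valid for all integer $r,s$, and obtains \eqref{ediff1} by specialising this to $s=0$ and matching constants with the definition of $\mathbb{G}_u$; \eqref{ediff3}, \eqref{ediff4} follow by conjugation. Your route through Lemma~\ref{oplem1} and $\partial_r(y)=(r+1)y$ is an equivalent packaging of the same computation.

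Two points where the paper is cleaner than your sketch. First, for membership in $\mathcal{M}_{r,s}\NX$ (the Fourier/growth conditions at all cusps), the paper does \emph{not} compute the expansion of $\mathcal{E}_u(z,r,s,\chi)$ directly. Instead it observes
\[
\mathcal{E}_u(z,r,r,\chi)=\frac{i}{y^r}\frac{(2r)!}{(2\pi i)^{2r+1}}\,E_u(z,r+1,\chi),
\]
where $E_u$ is the classical non-holomorphic Eisenstein series whose expansions at all cusps are already known to lie in $\mathbb{C}[[q,\bar q]][y^{\pm}]$; the off-diagonal cases are then reached by applying $\partial,\overline\partial$ and Lemma~\ref{oplem2}. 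This sidesteps any fresh unfolding.

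Second, your uniqueness induction is more tangled than necessary. Once $\partial D_{w,0}=0$ gives $D_{w,0}=0$ (Proposition~\ref{unique}, case $r>s$), you do not need to invoke Proposition~\ref{unique} again at each step: the homogeneous version of \eqref{ediff4}, namely $\overline\partial D_{r,s}=(s+1)D_{r-1,s+1}$, applied with $(r,s)=(w,0),(w-1,1),\dots$ immediately gives $D_{w-1,1}=0$, $D_{w-2,2}=0$, and so on, with no special case at $r=s$. This is what the paper means by ``uniqueness comes from Proposition~\ref{unique} and the remarks following it''.
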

\begin{proof}
    This proof mostly follows the same proof that was given in the $SL_2(\mathbb{Z})$ case (see Proposition 4.1 of \cite{brown1}):

\begin{itemize}
    \item Uniqueness comes from Proposition \ref{unique} and the remarks following it.
    \item Equation \eqref{EisA} converges and obeys equation \eqref{transform}.   
    \item We have
    \begin{align*}
E_u(z,r+1,\chi) & = \zeta(2r+2) \! \sum_{\gamma \in \Gamma_u \backslash \Gamma_0(N)} \! \overline{\chi}(\gamma)(\text{Im}(\sigma^{-1}_u \gamma z))^{r+1} 
\\
& =  \zeta(2r+2) \! \sum_{\gamma \in \Gamma_u \backslash \Gamma_0(N)} \dfrac{\overline{\chi}(\gamma) \text{Im}(z)^{r+1}}{j(\sigma^{-1}_u \gamma, z)^{r+1}j(\sigma^{-1}_u \gamma, \bar{z})^{r+1}}
    \end{align*}
    and hence
    \begin{equation*}
\mathcal{E}_{u}(z,r,r,\chi) = \dfrac{i}{y^r}\dfrac{(2r)!}{(2\pi i)^{2r+1}} E_u(z,r+1,\chi).
    \end{equation*}
The expansion of the right-hand side is known to lie in $\mathbb{C}[[q,\bar{q}]][y^{\pm}]$, the expansions of the RHS at the other cusps are also known to lie in $\mathbb{C}[[q,\bar{q}]][y^{\pm}]$ (see \cite{topics}, for example). The expansions of the $\mathcal{E}_{u}(z,r,s,\chi)$ at the cusps are deduced from the known expansions for $\mathcal{E}_{u}(z,r,r,\chi)$, by applying $\partial, \bar{\partial}$. Therefore, equation \eqref{EisA} satisfies \eqref{fourierext}.

\item The two above points prove that $\mathcal{E}_{u}(z,r,s,\chi)$ is a real-analytic modular form of weights $(r,s)$ for $\Gamma_0(N)$ with character $\chi$. It remains to show that equations \eqref{ediff1} to \eqref{ediff4} hold.
\item Equation \eqref{ediff2} holds by the following identity (for $r,s\in \mathbb{Z}$):
    \begin{equation*}
        \partial_r \left( \dfrac{z-\bar{z}}{(cz+d)^{r+1}(c\bar{z}+d)^{s+1}} \right) = (r+1)\left(\dfrac{z-\bar{z}}{(cz+d)^{r+2}(c\bar{z}+d)^s} \right)
    \end{equation*}
    and equation \eqref{ediff4} holds by complex conjugation.
      \item Equation \eqref{ediff1} follows from the above point and the identity below:
      \begin{equation*}
           -2\pi \text{Im}(z)\mathbb{G}_{u}(z,w+2,\chi) = \dfrac{\zeta(w+2)(w+1)!}{(2\pi i)^{w+1}} \sum_{\gamma \in \Gamma_u \backslash \Gamma_0(N)} \dfrac{ \overline{\chi}(\gamma) \: \! i \: \! \text{Im}(z) } {j(\sigma^{-1}_u\gamma, z)^{w+2}}
      \end{equation*}
      and \eqref{ediff3} follows by complex conjugation. \qedhere
\end{itemize}
\end{proof}

We note that we retrieve the Eisenstein series given by Brown when $\Gamma_0(N) = \Gamma \coloneqq SL_2(\mathbb{Z})$, $\chi$ is trivial and $r+s$ is even (here the only cusp is $u=\infty$):
\begin{equation*}
    \mathcal{E}(z,r,s) = \dfrac{w!}{(2\pi i)^{w+1}} \dfrac{1}{2}\sum_{\substack{(m,n)\in \mathbb{Z}^2 \\ (m,n)\neq (0,0)}}  \dfrac{i \, \text{Im}(z)}{(mz+n)^{r+1}(m\bar z + n)^{s+1}}
\end{equation*}
and writing a pair $(m,n)\in \mathbb{Z}^2 \neq (0,0)$ as $(gm',gn')$, where $g=gcd(m,n)$ and $m',n'$ are coprime, gives
\begin{align*}
   \mathcal{E}(z,r,s)  & =  \dfrac{w!}{(2\pi i)^{w+1}}  \dfrac{1}{2}\sum^{\infty}_{g=1}\sum_{\substack{(m',n')\in \mathbb{Z}^2 \\ gcd(m',n')=1}}  \dfrac{i \, \text{Im}(z)}{g^{r+1}(m'z+n')^{r+1}g^{s+1}(m'\bar z + n')^{s+1}} 
    \\ 
    & = \dfrac{w!}{(2\pi i)^{w+1}} \dfrac{1}{2}\sum^{\infty}_{g=1} g^{w+2} \sum_{\substack{(m',n')\in \mathbb{Z}^2 \\ gcd(m',n')=1}}  \dfrac{i \, \text{Im}(z)}{(m'z+n')^{r+1}(m'\bar z + n')^{s+1}} 
    \\ & = \dfrac{w! \, \zeta(w+2)}{(2\pi i)^{w+1}}  \sum_{\gamma \in \Gamma_{\infty} \backslash \Gamma}  \dfrac{i \, \text{Im}(z)}{j(\gamma, z)^{r+1} j(\gamma, \bar{z})^{s+1}} 
\end{align*}
then, since $\sigma^{-1}_{\infty} = 1$, we have
\begin{align*}
     \mathcal{E}(z,r,s) & = \dfrac{w! \, \zeta(w+2)}{(2\pi i)^{w+1}} \sum_{\gamma \in \Gamma_{\infty} \backslash \Gamma}  \dfrac{i \, \text{Im}(z)}{j(\sigma^{-1}_{\infty}\gamma, z)^{r+1} j(\sigma^{-1}_{\infty}\gamma, \bar{z})^{s+1}} \\ 
     & = \mathcal{E}_\infty (z,r,s,1).
\end{align*}

This new Eisenstein series is also an eigenfunction of the Laplacian, this is summarised in the following corollary:

\begin{corollary}
The real-analytic Eisenstein series (associated to a singular cusp $u$ and a character $\chi$) is an eigenfunction of the Laplacian with eigenvalue $-w=-r-s$:
\begin{equation*}
    \Delta   \mathcal{E}_{u}(z,r,s,\chi)  = -w   \mathcal{E}_{u}(z,r,s,\chi). 
\end{equation*}
\end{corollary}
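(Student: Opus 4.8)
The plan is to derive the eigenvalue equation directly from the differential equations already established in Proposition \ref{eisprop}, rather than by computing a Fourier expansion. Recall from \eqref{lap} that $\Delta_{r,s} = -\overline{\partial}_{s-1}\partial_r + r(s-1)$, so the goal is to compute $\overline{\partial}_{s-1}\partial_r \mathcal{E}_u(z,r,s,\chi)$.

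First I would split into cases according to the structure of \eqref{ediff1}--\eqref{ediff4}. In the main case $r,s \geq 1$: applying \eqref{ediff2} gives $\partial_r \mathcal{E}_u(z,r,s,\chi) = (r+1)\mathcal{E}_u(z,r+1,s-1,\chi)$, which is a form of weight $(r+1,s-1)$. Then $\overline{\partial}$ acts on it via $\overline{\partial}_{s-1}$, and \eqref{ediff4} (with $r$ replaced by $r+1$, noting $r+1 \geq 1$) gives $\overline{\partial}_{s-1}\mathcal{E}_u(z,r+1,s-1,\chi) = ((s-1)+1)\mathcal{E}_u(z,r,s,\chi) = s\,\mathcal{E}_u(z,r,s,\chi)$. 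Hence $\overline{\partial}_{s-1}\partial_r \mathcal{E}_u(z,r,s,\chi) = (r+1)s\,\mathcal{E}_u(z,r,s,\chi)$, and so $\Delta_{r,s}\mathcal{E}_u(z,r,s,\chi) = \bigl(-(r+1)s + r(s-1)\bigr)\mathcal{E}_u(z,r,s,\chi) = -(r+s)\mathcal{E}_u(z,r,s,\chi) = -w\,\mathcal{E}_u(z,r,s,\chi)$, as claimed.

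Next I would handle the boundary cases. When $s=0$, one instead uses the other expression $\Delta_{r,0} = -\partial_{r-1}\overline{\partial}_0 + 0\cdot(r-1) = -\partial_{r-1}\overline{\partial}_0$; by \eqref{ediff4} with $r \geq 1$, $\overline{\partial}_0 \mathcal{E}_u(z,r,0,\chi) = \mathcal{E}_u(z,r-1,1,\chi)$, and then by \eqref{ediff2} with $s=1$, $\partial_{r-1}\mathcal{E}_u(z,r-1,1,\chi) = r\,\mathcal{E}_u(z,r,0,\chi)$, giving eigenvalue $-r = -w$. The case $r=0$ is symmetric, using $\Delta_{0,s} = -\overline{\partial}_{s-1}\partial_0$ together with \eqref{ediff2} and \eqref{ediff4}. (The degenerate subcase $r=s=0$ is excluded since $w \geq 2$, but would be covered by either computation.) One should also double-check that the constraint $j(\sigma_u\gamma\sigma_u^{-1},z)^w = 1$ is what is needed for all the intermediate Eisenstein series invoked to be well-defined; since all of them have total weight $w$, the same hypothesis suffices throughout.

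I do not expect a serious obstacle here: the corollary is essentially a formal consequence of Proposition \ref{eisprop} plus the definition of $\Delta$, and the only mild care needed is bookkeeping the weights so that the correct one of the two expressions for $\Delta_{r,s}$ in \eqref{lap} is applied in each case, and confirming the index ranges ($r\geq 1$ or $s\geq 1$) under which \eqref{ediff2} and \eqref{ediff4} are valid line up with the cases. The slickest write-up is probably just the single display for the generic case with a remark that the edge cases follow by using the alternative formula for $\Delta_{r,s}$.
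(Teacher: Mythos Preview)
Your proposal is correct and is exactly the paper's approach: the paper's proof is the single line ``This follows from equations \eqref{lap}, \eqref{ediff2} and \eqref{ediff4},'' and you have simply written out the computation that line summarizes, including the careful case split at $s=0$ and $r=0$.
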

\begin{proof}
    This follows from equations \eqref{lap}, \eqref{ediff2} and  \eqref{ediff4}. 
\end{proof}

We will now study the general theory of eigenfunctions of $\Delta$.

\subsection{Eigenfunctions of the Laplacian}\label{seceig}

For $\lambda \in \mathbb{C}$, we let $
\mathcal{HM}^!(\Gamma_0(N),\chi ; \lambda) \coloneqq \mathrm{Ker} (\Delta-\lambda : \mathcal{M}^!\NX \to \mathcal{M}^!\NX)$ denote the space of eigenfunctions of $\Delta$ that have eigenvalue $\lambda$. The following lemma captures some of the key characteristics shared by all of the functions in this space:
\begin{lemma}[Lemma 5.2 of \cite{brown1}]\label{eigenexp}
    Let $F \in \mathcal{HM}^!(\Gamma_0(N),\chi ; \lambda) $ be of weights $(r,s)$, then the eigenvalue $\lambda$ is an integer (or $F=0$):
    \begin{equation*}
        \mathcal{HM}^!\NX = \bigoplus_{n \in \mathbb{Z}} \mathcal{HM}^!(\Gamma_0(N),\chi ; n).
    \end{equation*}
    Furthermore, there exists an integer $k_0$ such that $\lambda = - k_0(k_0 + r+s - 1)$. If $k_0 \neq 1 -r-s-k_0$, we let $k'_0 = \mathrm{min}\{ k_0, 1-r-s-k_0 \} $ and there is a unique decomposition of $F= F^h + F^a + F^0 $ given by
    \begin{align}
F^h(z) \coloneqq \sum^{-s}_{j=k'_0} y^j \sum_{\substack{m \geq -N \\m \neq 0}} a_m^{(j)} q^m, \label{fh}\\
F^a(z) \coloneqq \sum^{-r}_{j=k'_0} y^j \sum_{\substack{m \geq -N' \\m \neq 0}} b_m^{(j)} \bar{q}^m, \label{fa}\\
F^0(z) \coloneqq ay^{k'_0} + by^{1-r-s-k'_0}, \nonumber
\end{align}
for some $a, b, a_m^{(j)},b_m^{(j)} \in \mathbb{C}$ and $N, N'  \in \mathbb{N}$. 
If $k_0 = 1-r-s-k_0$, then we have a unique decomposition similar to the above but with $F^0 = ay^{k'_0}$.  

Finally $F^h$, $F^a$, $y^{k'_0}$ and $y^{1-r-s-k'_0}$ are eigenfunctions of $\Delta$ with eigenvalue $\lambda$.
\end{lemma}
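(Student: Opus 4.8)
The plan is to import the structure theory from Brown's Lemma 5.2 and check that the only ingredients it uses are the transformation law $(i)$, the $\infty$-expansion $(ii)$, and the action of $\partial_r,\overline\partial_s$ on such expansions — all of which are unchanged for $\Gamma_0(N)$ with character $\chi$. Concretely, I would first recall that $\Delta$ has bidegree $(0,0)$, so it preserves weights $(r,s)$ and preserves the ring $\C[[q,\bar q]][y^\pm]$ by Lemma~\ref{oplem2}. Given $F\in\mathcal{HM}^!\NX;\lambda)$ of weights $(r,s)$, I would plug the $\infty$-expansion \eqref{fourier} into the eigenvalue equation $\Delta F=\lambda F$ and use the explicit formulas of Lemma~\ref{oplem2} for $\partial_r(y^jq^m\bar q^n)$ and $\overline\partial_s(y^jq^m\bar q^n)$, together with \eqref{lap}, to get a recursion on the coefficients $a^{(j)}_{m,n}$ in the exponent $j$ of $y$. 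For the ``pure $y$-power'' part (terms with $m=n=0$), $\Delta$ acts on $ay^j$ by $\Delta(ay^j)=-j(j+r+s-1)\,ay^j$, so $ay^j$ is an eigenfunction exactly when $\lambda=-j(j+r+s-1)$; this both forces $\lambda$ to be an integer of the stated form $-k_0(k_0+r+s-1)$ and pins down the two possible exponents $k_0$ and $1-r-s-k_0$, giving the $F^0$ part and the last sentence of the lemma for the pieces $y^{k'_0}, y^{1-r-s-k'_0}$.

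Next I would treat the ``holomorphic-type'' and ``antiholomorphic-type'' parts. For terms $y^jq^m\bar q^n$ with $m>0,n=0$, one computes $\Delta(y^jq^m)$ via \eqref{lap}: applying $\partial_r$ gives a linear combination of $y^jq^m$ and $y^{j-1}q^m$ (the $-4\pi m y$ term lowers the $y$-degree after the next operator acts), and then $\overline\partial_{s-1}$ produces terms in $y^j,y^{j-1}$. Matching against $\lambda y^jq^m$ shows the coefficients in a fixed $q^m$ ``column'' satisfy a triangular recursion in $j$ whose solvability forces the column to terminate from below at $j=k'_0$ and from above at $j=-s$ (from the requirement that raising/lowering cannot create arbitrarily high $y$-powers and that the eigenvalue is already used up), yielding exactly the shape \eqref{fh}; the mixed terms $q^m\bar q^n$ with both $m,n\neq0$ are killed because the recursion would force them to vanish (this is where one uses that $\lambda$ has been fixed by the pure-power part and cannot simultaneously match a genuinely mixed term). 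The antiholomorphic case \eqref{fa} is the complex conjugate computation with $\partial\leftrightarrow\overline\partial$, $r\leftrightarrow s$. Uniqueness of the decomposition $F=F^h+F^a+F^0$ is immediate since the three summands live in disjoint sets of monomials $q^m\bar q^n$ (those with $n=0,m\neq0$; those with $m=0,n\neq0$; those with $m=n=0$), and each is separately $\Delta$-stable, hence separately an eigenfunction with eigenvalue $\lambda$.

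Finally, I would note the one genuinely new point beyond Brown's argument: that the expansion \eqref{fourier} is at the cusp $\infty$, which is singular for every $\chi$ (as remarked in Section~\ref{Prelims}), so the Fourier expansion in $q=e^{2\pi i z}$ is legitimate, and the transformation law $(i)$ at $\infty$ reduces to $F(z+1)=\chi(1)F(z)=F(z)$, exactly as in the $SL_2(\Z)$ case; nothing about the level $N$ or the character enters the local computation at $\infty$. Thus the proof is: \emph{the statement is local at $\infty$ and the local structure of eigenfunctions is identical to the $SL_2(\Z)$ case, so Lemma 5.2 of \cite{brown1} applies verbatim.} The main obstacle — really the only place requiring care — is the bookkeeping of the triangular recursion that produces the precise truncation bounds $k'_0\le j\le -s$ in \eqref{fh} (and $k'_0\le j\le-r$ in \eqref{fa}) and shows the genuinely mixed monomials drop out; this is a finite, if slightly intricate, linear-algebra argument with the matrices $\partial_r,\overline\partial_s$ restricted to a single $q^m\bar q^n$ column, and I would present it by citing Brown and indicating that his computation is insensitive to the replacement of $SL_2(\Z)$ by $\NX$.
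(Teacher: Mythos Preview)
Your approach is the same as the paper's: reduce to Brown's Lemma~5.2 by observing that the argument is purely local at the cusp $\infty$ and uses only the expansion \eqref{fourier} and the operator formulas of Lemma~\ref{oplem2}, none of which see the level $N$ or the character $\chi$. The paper's proof is essentially a one-line citation of \cite{brown1} (and \cite{brown3} for the weakly-holomorphic version), so in that sense you have supplied more detail than the authors did.

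There is, however, one small but genuine point you gloss over and that the paper explicitly flags. You write that Brown's argument ``applies verbatim'', but this is not quite true: in the $SL_2(\mathbb{Z})$ case one always has $r+s\in 2\mathbb{Z}$, so the two roots $k_0$ and $1-r-s-k_0$ of the quadratic $\lambda=-j(j+r+s-1)$ are automatically distinct. For $\Gamma_0(N)$ with a character satisfying $\chi(-1)=-1$, the weight $r+s$ can be odd, and then $k_0=1-r-s-k_0$ is possible; in that degenerate case the constant part collapses to a single term $F^0=ay^{k'_0}$ rather than two. This is exactly the ``second decomposition'' clause in the statement, and it is the one place where the proof is not literally verbatim from Brown. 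Your write-up should acknowledge this case rather than fold it into the generic one.
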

\begin{proof}
    This proof is essentially identical to the proof of the $\mathcal{HM}(SL_2(\mathbb{Z}))$ case (see Lemma 5.2 of \cite{brown1}), which also holds for $\mathcal{HM}^!(SL_2(\mathbb{Z}))$ (see Lemma 4.3 of \cite{brown3} and the remarks following it). The main difference being that in the classical case $k_0$ is always distinct from $1-r-s-k_0$, whereas here $k_0$ may equal $1-r-s-k_0$. This is due to the fact that $r+s \in 2\mathbb{Z}$ for $\mathcal{M}_{r,s}^!(SL_2(\mathbb{Z}))$, which is not necessarily true for $\mathcal{M}^!_{r,s}\NX$ (see Lemma 5.2 of \cite{brown1} for more details). This difference leads to the addition of the second decomposition.
\end{proof}

For all $\gamma\in SL_2(\mathbb{R})$, the action of the double slash operator $||_{r,s}\gamma$ preserves eigenfunctions of $\Delta_{r,s}$.

\begin{lemma}\label{efinv} Suppose $F$ is real analytic with $\Delta_{r,s} F = \lambda F$, for some eigenvalue $\lambda$, and let $\gamma\in SL_2(\mathbb{R})$. Then
\begin{equation}
\Delta_{r,s}(F||_{r,s}\gamma)=\lambda(F||_{r,s}\gamma).
\end{equation}
\end{lemma}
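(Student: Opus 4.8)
The plan is to reduce the statement to the already-established compatibility of the Maass operators $\partial_r,\overline\partial_s$ with the double slash operator, namely Corollary \ref{opcor}. Recall from \eqref{lap} that $\Delta_{r,s}=-\overline\partial_{s-1}\partial_r+r(s-1)$. The key observation is that each of the two factors $\partial_r$ and $\overline\partial_{s-1}$ intertwines with the slash action, but with a shift in the weights: by Corollary \ref{opcor}, $\partial_r(F||_{r,s}\gamma)=((\partial_r F)||_{r+1,s-1}\gamma)$, and applying the corollary again with weights $(r+1,s-1)$ in place of $(r,s)$ gives $\overline\partial_{s-1}((\partial_r F)||_{r+1,s-1}\gamma)=((\overline\partial_{s-1}\partial_r F)||_{r,s}\gamma)$. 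Chaining these two identities yields
\begin{equation*}
\overline\partial_{s-1}\partial_r(F||_{r,s}\gamma)=((\overline\partial_{s-1}\partial_r F)||_{r,s}\gamma).
\end{equation*}

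From here the computation is immediate. Since the scalar $r(s-1)$ commutes with the slash operator (multiplication by a constant is clearly compatible with $||_{r,s}\gamma$, as the latter is $\mathbb{C}$-linear), we obtain
\begin{equation*}
\Delta_{r,s}(F||_{r,s}\gamma)=-\overline\partial_{s-1}\partial_r(F||_{r,s}\gamma)+r(s-1)(F||_{r,s}\gamma)=((-\overline\partial_{s-1}\partial_r F)||_{r,s}\gamma)+r(s-1)(F||_{r,s}\gamma)=((\Delta_{r,s}F)||_{r,s}\gamma).
\end{equation*}
Now invoke the hypothesis $\Delta_{r,s}F=\lambda F$: the right-hand side becomes $((\lambda F)||_{r,s}\gamma)=\lambda(F||_{r,s}\gamma)$, again by $\mathbb{C}$-linearity of the slash action. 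This is exactly the claimed identity.

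I would present it in this order: first state the two applications of Corollary \ref{opcor} with the appropriate weight shifts, then compose them, then plug into the definition of $\Delta_{r,s}$, then use the eigenfunction hypothesis. One small point worth a remark is that $F||_{r,s}\gamma$ is automatically real analytic (composition and products of real-analytic functions with the nowhere-vanishing factors $j(\gamma,z)^{-r}$, $j(\gamma,\bar z)^{-s}$ on $\mathbb{H}$), so that $\Delta_{r,s}$ genuinely applies to it. There is no real obstacle here: the entire content is bookkeeping with the weight indices to make sure the two invocations of Corollary \ref{opcor} line up, and the only thing to be careful about is that the second invocation uses weights $(r+1,s-1)$ rather than $(r,s)$. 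One could alternatively run the argument through the other expression $\Delta_{r,s}=-\partial_{r-1}\overline\partial_s+s(r-1)$ from \eqref{lap}, which gives an independent check but is not needed.
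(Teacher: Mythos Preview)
Your proof is correct and follows exactly the approach the paper indicates: the paper's proof is the single sentence ``This is a consequence of equation \eqref{lap} and Corollary \ref{opcor},'' and you have simply unpacked that by applying Corollary \ref{opcor} twice (with the appropriate weight shift on the second application) and then using the definition of $\Delta_{r,s}$.
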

\begin{proof} This is a consequence of equation \eqref{lap} and Corollary \ref{opcor}.
\end{proof}

\subsection{Modular iterated integrals}

We fix a congruence subgroup $\Gamma_0(N)$ and a character $\chi$, and then define $\mathcal{E}(\Gamma_0(N), \chi)$ to be the space of all real-analytic Eisenstein series associated to $\chi$ and any singular cusp of $\Gamma_0(N)$. By Proposition \ref{eisprop}, this space is almost, but not quite, closed under the action of the operators $\partial$ and $\overline{\partial}$:
\begin{align}
    \partial \mathcal{E}(\Gamma_0(N), \chi) \subset \mathcal{E}(\Gamma_0(N), \chi) + \mathbb{G}(\Gamma_0(N),\chi)[y], \label{diffEis}
    \\
    \overline{\partial} \mathcal{E}(\Gamma_0(N), \chi) \subset \mathcal{E}(\Gamma_0(N), \chi) + \overline{\mathbb{G}(\Gamma_0(N),\chi)}[y], \nonumber
\end{align}
where $\mathbb{G}(\Gamma_0(N),\chi)$ is the space of all holomorphic Eisenstein series associated to $\chi$ and any singular cusp of $\Gamma_0(N)$.


We can generalise this idea to define a space of functions in $\mathcal{M}^!_{r,s}(\Gamma_0(N), \chi)$, called the space of modular iterated integrals, which will be  closed under the operators $\partial$ and $\overline{\partial}$,
up to the addition of an extra component (and its complex conjugation). This extra component will include the space $M^!(\Gamma_0(N), \chi)$ of weakly holomorphic modular forms, which, in turn, includes $\mathbb{G}(\Gamma_0(N),\chi)$. We first recall the definition of $M^!(\Gamma_0(N), \chi)$ and then define the space of modular iterated integrals.

A variation of the transformation equation used to define real-analytic modular forms \eqref{transform} is given by
\begin{equation*}
    f(\gamma z)=\chi(d)j(\gamma,z)^r f(z), \quad \ \forall \ \gamma \in \Gamma_0(N) \text{ and } z \in \mathbb{H}.
\end{equation*}
A holomorphic function on $\mathbb{H}$ that satisfies this equation and is meromorphic at the cusps of $\Gamma_0(N)$ is a weakly holomorphic modular form of weight $r$ for $\Gamma_0(N)$ with character $\chi$. The space of such forms is denoted by $M^!_r(\Gamma_0(N), \chi)$ and we set $M^!(\Gamma_0(N), \chi) = \bigoplus_{r} M^!_r(\Gamma_0(N), \chi)$.

\begin{definition}
    We let $\mathcal{MI}^!_{-1}(\Gamma_0(N),\chi)=0$. For any integer $k \geq 0$  we define the space of modular iterated integrals of length $k$, $\mathcal{MI}^!_{k}(\Gamma_0(N),\chi)$, to be the largest subspace of $\bigoplus_{r,s \geq 0} \mathcal{M}^!_{r,s}(\Gamma_0(N),\chi)$ which satisfies 
\begin{align}
    \partial \mathcal{MI}^!_k(\Gamma, \chi) &\subset \mathcal{MI}^!_k(\Gamma,\chi) + M^!(\Gamma, \chi)[y] \otimes \mathcal{MI}^!_{k-1}(\Gamma, \chi), \label{MIeq} \\[1mm]
    \overline{\partial} \mathcal{MI}^!_k(\Gamma,\chi) &\subset \mathcal{MI}^!_k(\Gamma,\chi) +      \overline {M^!(\Gamma, \chi)}[y] \otimes \mathcal{MI}^!_{k-1}(\Gamma,\chi), \nonumber
\end{align}
where we have set $\Gamma \coloneqq \Gamma_0(N)$ to keep the above expressions more succinct.

\end{definition}

This space is closed under complex conjugation and is an extension of the space given by Brown in Definition 5.1 of \cite{brown3}, we also note that our definition reduces to Brown's one when $\Gamma_0(N)= SL_2(\mathbb{Z})$ and $\chi$ is trivial.

The simplest example we can look at, for non-negative $k$, is when $k=0$. We begin by introducing the following two lemmas (these lemmas are extensions of Lemma 3.1 and Corollary 3.5 from \cite{brown3}, their proofs work in a very similar manner and so they are not included for the sake of brevity): 

\begin{lemma}\label{lemmaker}
For all weights $r,s$ we have the following:
\begin{align*}
        & \hspace{3mm} \mathcal{M}^!_{r,s}(\Gamma_0(N), \chi) \cap \emph{ker}  (\partial_r)  \subset y^{-r} \mathbb{C} [[\overline{q}]],
        \\[1mm]
        & \hspace{3mm} \mathcal{M}^!_{r,s}(\Gamma_0(N), \chi) \cap \emph{ker}  (\overline\partial_s)   \subset y^{-s} \mathbb{C} [[q]],
        \\[2mm]
        &\mathcal{M}^!(\Gamma_0(N), \chi) \cap \emph{ker}  (\partial) \cap \emph{ker}  (\overline\partial)  \subset  \mathbb{C}[y^\pm].
\end{align*}
\end{lemma}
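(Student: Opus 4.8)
The plan is to prove the three containments in turn, the third being a near-immediate consequence of the first two. Throughout I work with the expansion~\eqref{fourier} of $F \in \mathcal{M}^!_{r,s}(\Gamma_0(N),\chi)$, namely $F(z) = \sum_{|j|\le N_0} y^j \sum_{m,n \ge -N_0'} a_{m,n}^{(j)} q^m \bar q^n$, and apply Lemma~\ref{oplem2}, which tells us exactly how $\partial_r$ and $\overline\partial_s$ act on each monomial $y^j q^m \bar q^n$. Since Lemma~\ref{oplem2} says $\partial_r$ preserves the shape of the expansion, the condition $\partial_r F = 0$ becomes a collection of scalar equations on the coefficients $a_{m,n}^{(j)}$, obtained by collecting the coefficient of each $q^m \bar q^n$ in the resulting (finite in $y$) Laurent polynomial.

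First I would treat $\mathcal{M}^!_{r,s}(\Gamma_0(N),\chi) \cap \mathrm{ker}(\partial_r)$. Fix a pair $(m,n)$ with $m \neq 0$ and look at the coefficient of $q^m \bar q^n$ in $\partial_r F$: by Lemma~\ref{oplem2} it equals $\sum_j \bigl(-4\pi m y + r + j\bigr) a_{m,n}^{(j)} y^j$, a Laurent polynomial in $y$ that must vanish identically. The leading term (in the highest surviving power of $y$) forces, via the factor $-4\pi m y$, that $a_{m,n}^{(j)} = 0$ for the top $j$; descending inductively on $j$ one gets $a_{m,n}^{(j)} = 0$ for all $j$ whenever $m \neq 0$. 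Hence only terms with $m = 0$ survive, so $F = \sum_j y^j \sum_{n \ge -N_0'} a_{0,n}^{(j)} \bar q^n$. Now rerun the argument with $m = 0$: the coefficient of $\bar q^n$ is $\sum_j (r+j) a_{0,n}^{(j)} y^j$, which vanishes identically, forcing $a_{0,n}^{(j)} = 0$ unless $j = -r$. Therefore $F = y^{-r} \sum_{n \ge -N_0'} a_{0,n}^{(-r)} \bar q^n \in y^{-r}\mathbb{C}[[\bar q]]$, as claimed. (Strictly the series is a Laurent series in $\bar q$, but $y^{-r}\mathbb{C}[[\bar q]]$ is being used loosely here as in~\cite{brown3}; one can also note meromorphy at the cusp bounds $n$ below.) The second containment follows by the identical argument applied to $\overline\partial_s$, using the companion formula $\overline\partial_s(y^j q^m \bar q^n) = (-4\pi n y + s + j) y^j q^m \bar q^n$: vanishing of $\overline\partial_s F$ kills all terms with $n \neq 0$ and then forces $j = -s$, giving $F \in y^{-s}\mathbb{C}[[q]]$.

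For the third containment, suppose $F \in \mathcal{M}^!(\Gamma_0(N),\chi)$ (so $F$ is a finite sum of pieces $F_{r,s} \in \mathcal{M}^!_{r,s}$ of pure weight, and since $\partial,\overline\partial$ are bigraded each $F_{r,s}$ must itself lie in $\mathrm{ker}(\partial) \cap \mathrm{ker}(\overline\partial)$) lies in $\mathrm{ker}(\partial)\cap\mathrm{ker}(\overline\partial)$. By the first containment a pure-weight piece of weights $(r,s)$ lies in $y^{-r}\mathbb{C}[[\bar q]]$, and by the second it lies in $y^{-s}\mathbb{C}[[q]]$; intersecting, it must be of the form $c\, y^{-r}$ with $-r = -s$ forced (otherwise the two monomial-in-$y$ constraints are incompatible unless $c = 0$), hence it lies in $\mathbb{C}[y^{\pm}]$. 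Summing over the finitely many weights gives $F \in \mathbb{C}[y^{\pm}]$.

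The only real subtlety — and the step I would be most careful about — is the descending induction on the power of $y$ within a fixed $(m,n)$ slot: one must make sure the finite set $\{j : |j| \le N_0\}$ of possible exponents is handled correctly so that the top-degree coefficient argument actually terminates, and that the case $m \neq 0$ genuinely leaves no freedom (the factor $-4\pi m y$ shifts $y$-degree by one, so it is the coefficient of the top power $y^{N_0+1}$ in $\partial_r F$ restricted to the $q^m\bar q^n$ slot that first forces $a_{m,n}^{(N_0)} = 0$, and so on down). This is routine linear algebra on a finite triangular system, so no genuine obstacle arises; the containments are exactly Lemma~3.1 and Corollary~3.5 of~\cite{brown3} and the proof transfers verbatim since it uses only the monomial action of $\partial_r,\overline\partial_s$, which is identical for $\Gamma_0(N)$.
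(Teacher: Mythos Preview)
Your proposal is correct and follows precisely the approach the paper has in mind: the paper omits the proof entirely, saying only that these are extensions of Lemma~3.1 and Corollary~3.5 of \cite{brown3} whose proofs ``work in a very similar manner,'' and your argument is exactly Brown's --- use Lemma~\ref{oplem2} to read off the coefficient equations, kill the $m\neq 0$ terms by descending induction on the $y$-degree, then force $j=-r$, and intersect for the third statement. There is nothing to add.
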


\begin{lemma}\label{lemmadf=0}
    Let $F \in \mathcal{M}^!_{r,s}(\Gamma_0(N), \chi)$ with $h:=r-s \geq 0$, and suppose that
    \begin{equation*}
        \partial F = 0 \qquad \text{and}\qquad \overline{\partial}^{h+1}F = 0.
    \end{equation*}
    Then $F$ vanishes unless $h=0$, in which case $F \in \mathbb{C} y^{-r}$.
\end{lemma}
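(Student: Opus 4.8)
The plan is to combine the two kernel results from Lemma \ref{lemmaker} with the explicit action of $\overline{\partial}_s$ on the Fourier expansion from Lemma \ref{oplem2}, and then induct on $h=r-s$. First I would record the starting point: since $\partial_r F = 0$, Lemma \ref{lemmaker} gives $F \in y^{-r}\mathbb{C}[[\overline q]]$, so we may write
\[
F(z) = y^{-r}\sum_{m\geq 0} c_m \overline q^{\,m}
\]
for some $c_m\in\mathbb{C}$ (or, allowing poles at the cusp, $m\geq -N'$; the argument is the same). The idea is now to understand how $\overline\partial$ acts on such a function. By Lemma \ref{oplem2}, $\overline\partial_s\bigl(y^{-r}\overline q^{\,m}\bigr) = (-4\pi m y + s - r)\,y^{-r}\overline q^{\,m} = (-4\pi m y - h)\,y^{-r}\overline q^{\,m}$. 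Hence $\overline\partial_s F$ has, in each $\overline q^{\,m}$-component, a factor of $(-4\pi m y - h)$ pulled out, and the weight drops to $(r-1,s+1)$; iterating, $\overline\partial^{j}F$ has in its $\overline q^{\,m}$-component the coefficient $c_m$ times a polynomial in $y$ of degree exactly $j$ when $m\neq 0$ (leading term $(-4\pi m y)^j$, up to the shifting of the constant terms $-h, -h-2, \dots$), and of degree $0$ — in fact a product of the nonzero constants $-h,-h-1,\dots$ — when $m=0$, provided $h\geq 1$.

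Next I would impose $\overline\partial^{h+1}F = 0$ and read off consequences component by component. For each $m\neq 0$, the $\overline q^{\,m}$-component of $\overline\partial^{h+1}F$ is $c_m$ times a polynomial in $y$ whose leading coefficient is a nonzero multiple of $m^{h+1}$; since this must vanish identically in $y$, we get $c_m = 0$ for all $m\neq 0$. Thus $F = c_0 y^{-r}$. For the $m=0$ term, $\overline\partial^{h+1}(y^{-r}) = \bigl(\prod_{i=0}^{h}(s-r+2i?)\bigr)y^{-r}$ — more carefully, successive applications multiply by the constants coming from $(-4\pi\cdot 0\cdot y + \text{const})$ where the relevant constant at the $j$-th step is the new $s+j$ minus the new $r-j$, i.e. $(s+j)-(r-j) = -h+2j$... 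I would instead track it cleanly via the weights: applying $\overline\partial$ once to a weight-$(a,b)$ form kills the $y^{-a}$ monomial precisely when $b - a = 0$, and otherwise multiplies it by $b-a$. Starting from weight $(r,s)$ with $b-a = s-r = -h$ and applying $\overline\partial$ repeatedly, the successive multipliers are $-h, -h+2, -h+4, \dots$; exactly one of these is zero iff $h$ is even, but in all cases the monomial survives $h$ applications with a nonzero scalar unless it got killed earlier — which happens only if $0\in\{-h,-h+2,\dots\}$, i.e. if $h$ is even and $h\leq 2\cdot(\text{step})$. The clean conclusion I want: if $h\geq 1$ then $\overline\partial^{h+1}(y^{-r})\neq 0$ unless a zero multiplier occurs; handling the parity cases shows $c_0=0$ forces nothing new, so one must argue that for $h\geq 1$ the constant $c_0$ is also forced to vanish, giving $F=0$, while for $h=0$ no constraint arises and $F = c_0 y^{-r}\in\mathbb{C}y^{-r}$, which indeed lies in the space.

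The main obstacle is exactly this bookkeeping of the scalar multipliers in the $m=0$ term and making sure the parity of $h$ is handled correctly — it is conceivable that for certain even $h$ the monomial $y^{-r}$ is annihilated by some power $\overline\partial^{j}$ with $j\leq h$, which is consistent with the hypothesis $\overline\partial^{h+1}F=0$ and would appear to allow a nonzero $F$; the point to verify is that such $F = c_0 y^{-r}$ then fails to be compatible with $\partial_r F = 0$ together with modularity when $h>0$, or else is excluded directly. I would resolve this by invoking Proposition \ref{unique}-type reasoning: $\partial_r F=0$ with $F=c_0y^{-r}$ of weight $(r,s)$ means $y^r F = c_0 \in \overline M_{s-r}(\Gamma_0(N),\chi)$, a holomorphic form of negative weight $s-r=-h<0$ when $h>0$, hence $c_0=0$; and when $h=0$ this gives $c_0\in\overline M_0 = \mathbb{C}$, consistent with $F\in\mathbb{C}y^{-r}$. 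This last step cleanly closes the argument without delicate multiplier tracking, so in the write-up I would lead with the $\partial_r F=0 \Rightarrow y^rF$ anti-holomorphic observation and use $\overline\partial^{h+1}F=0$ only to kill the $c_m$ with $m\neq 0$.
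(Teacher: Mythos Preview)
The paper does not include a proof of this lemma, deferring instead to Corollary 3.5 of \cite{brown3} with the remark that the argument carries over unchanged. Your proposal is correct and is essentially what one expects the omitted argument to be: apply Lemma \ref{lemmaker} to write $F = y^{-r}\sum_{m\geq 0}c_m\overline{q}^{\,m}$, use $\overline\partial^{\,h+1}F=0$ together with the leading $y$-power of $\overline\partial^{\,h+1}(y^{-r}\overline{q}^{\,m})$ to kill the $c_m$ with $m\neq 0$, and then dispose of $c_0$ by modularity.

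Two small corrections to the bookkeeping in your middle paragraph. When you iterate $\overline\partial$ on the monomial $y^{-r}$, the successive scalar multipliers are $-h,\,1-h,\,2-h,\dots$, increasing by $1$ rather than by $2$: the exponent of $y$ stays fixed at $-r$, and only the subscript of $\overline\partial_{s+j}$ increments, so the $j$-th multiplier is $(s+j)+(-r)=j-h$. In particular $\overline\partial^{\,h+1}(y^{-r})=(-h)(1-h)\cdots(h-h)\,y^{-r}=0$ for every $h\geq 0$, with no parity distinction, so the hypothesis $\overline\partial^{\,h+1}F=0$ genuinely imposes nothing on $c_0$; your instinct to treat $c_0$ by a separate modularity argument is therefore both correct and necessary. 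Your final step is the clean way to finish: once $F=c_0y^{-r}$, the weight-$(r,s)$ transformation law reads $c_0=\chi(d)(c\bar z+d)^{s-r}c_0$ for all $\gamma\in\Gamma_0(N)$, and choosing any $\gamma$ with $c\neq 0$ forces $s=r$ or $c_0=0$. One caution on phrasing: in the $\mathcal{M}^!$ setting you should not invoke ``no (anti-)holomorphic modular forms of negative weight'', since that fails for weakly holomorphic forms; but you do not need it, as the direct transformation-law argument on the constant function already suffices.
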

    
Using the above two lemmas we can now give a characterisation of $\mathcal{MI}^!_0(\Gamma_0(N), \chi)$.

\begin{proposition}
Fix a congruence subgroup $\Gamma_0(N)$ and a character $\chi$, then the associated space of length-zero modular iterated integrals is given by
\begin{equation}\label{MI0}
    \mathcal{MI}^!_0(\Gamma_0(N), \chi)= \mathbb{C}[y^{-1}].
\end{equation}
\end{proposition}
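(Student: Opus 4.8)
The plan is to show the two inclusions $\mathbb{C}[y^{-1}] \subseteq \mathcal{MI}^!_0(\Gamma_0(N),\chi)$ and $\mathcal{MI}^!_0(\Gamma_0(N),\chi) \subseteq \mathbb{C}[y^{-1}]$ separately. For the first inclusion, recall that $\mathcal{MI}^!_0(\Gamma_0(N),\chi)$ is by definition the \emph{largest} subspace $V$ of $\bigoplus_{r,s\geq 0}\mathcal{M}^!_{r,s}(\Gamma_0(N),\chi)$ satisfying $\partial V \subseteq V + M^!(\Gamma_0(N),\chi)[y]\otimes \mathcal{MI}^!_{-1}(\Gamma_0(N),\chi) = V$ (since $\mathcal{MI}^!_{-1} = 0$), and likewise $\overline{\partial} V \subseteq V$. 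So I first observe that $\mathbb{C}[y^{-1}]$ consists of genuine real-analytic modular forms: each $y^{-k}$ satisfies the transformation law \eqref{transform} with trivial character in weights $(r,s)$ whenever $r+s = 2k$ (so $r,s\geq 0$ can be arranged, e.g. $r=s=k$), has the required $q,\bar q$-expansion (trivially, with only the $y^j$ term $j=-k$), and the growth condition holds. Then, using Lemma \ref{oplem2}, $\partial_r(y^{-k}) = (r-k)y^{-k}$ and $\overline\partial_s(y^{-k}) = (s-k)y^{-k}$, both of which lie in $\mathbb{C}[y^{-1}]$. Hence $\mathbb{C}[y^{-1}]$ is a subspace closed under $\partial$ and $\overline\partial$, so by maximality it is contained in $\mathcal{MI}^!_0(\Gamma_0(N),\chi)$.

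For the reverse inclusion, take $F \in \mathcal{MI}^!_0(\Gamma_0(N),\chi)$; by definition $\partial F \in \mathcal{MI}^!_0(\Gamma_0(N),\chi)$ and $\overline\partial F \in \mathcal{MI}^!_0(\Gamma_0(N),\chi)$, and iterating, the whole $\partial,\overline\partial$-orbit of $F$ stays inside this space, which sits inside $\bigoplus_{r,s\geq 0}\mathcal{M}^!_{r,s}(\Gamma_0(N),\chi)$. The idea is to run a descent on the weights. We may assume $F$ is homogeneous of some weight $(r,s)$ with $r,s\geq 0$; write $h = r-s$ and assume WLOG (using complex conjugation symmetry of the space) that $h\geq 0$. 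Applying $\overline\partial$ lowers $s$ by... wait — $\overline\partial$ has bidegree $(-1,1)$, so it increases $s$ and decreases $r$; applying $\partial$ (bidegree $(1,-1)$) does the reverse. The point is that since we are confined to $r,s\geq 0$ and both operators stay in the space, repeatedly applying $\overline\partial$ must eventually leave the region $r\geq 0$ unless the image becomes zero; more precisely $\overline\partial^{r+1} F$ would have first coordinate $-1 < 0$, hence cannot lie in $\bigoplus_{r,s\geq 0}\mathcal{M}^!_{r,s}$, so $\overline\partial^{r+1}F = 0$, and in particular $\overline\partial^{h+1}F = 0$ if we first apply $\partial$ appropriately to reduce; similarly $\partial^{s+1}F = 0$. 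So $F$ is killed by a power of each operator. Now one bootstraps: consider $G := \partial^{a}F$ for the largest $a$ with $G\neq 0$; then $\partial G = 0$, and $G$ still lies in the space (weights $(r+a,s-a)$, still $\geq 0$) and is killed by a power of $\overline\partial$. By Lemma \ref{lemmadf=0}, applied to $G$ with its weight-difference $h' = (r+a)-(s-a) = h + 2a \geq 0$: if $h' > 0$ then $G = 0$, contradiction, so $a = 0$ and $h = 0$, meaning $F$ already has $r = s$; then $\partial F = 0$ and, since $h = 0$, $\overline\partial^{1}F = \overline\partial F$; but we need $\overline\partial^{h+1}F = \overline\partial F = 0$ too. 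Here I should instead argue symmetrically: by the analogue of Lemma \ref{lemmadf=0} with roles of $\partial,\overline\partial$ swapped (or by complex conjugation) we also get $\overline\partial F = 0$. Then $F \in \ker\partial \cap \ker\overline\partial$, and the third bullet of Lemma \ref{lemmaker} gives $F \in \mathbb{C}[y^\pm]$. Finally, intersecting with the constraint $r,s\geq 0$ (a form $\sum c_j y^j \in \mathbb{C}[y^\pm]$ of weight $(r,s)$ is a single monomial $c\,y^j$ with $r+s = 2j$, wait—actually $\mathbb{C}[y^\pm]$-elements of weight $(r,s)$ are $c\,y^{-(r+s)/2}$-type; need $r+s$ even): positivity of $r,s$ forces $j \leq 0$, i.e. $F \in \mathbb{C}[y^{-1}]$.

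I expect the main obstacle to be the bookkeeping in the descent argument — specifically, verifying cleanly that some power $\partial^{a}F$ becomes a nonzero element in $\ker\partial$ (or vanishes) while staying inside $\bigoplus_{r,s\geq 0}\mathcal{M}^!_{r,s}(\Gamma_0(N),\chi)$, so that Lemma \ref{lemmadf=0} is actually applicable, and making sure the weight-nonnegativity constraint is what forces termination. One subtlety: Lemma \ref{lemmadf=0} as stated requires $h = r - s \geq 0$; I must ensure I apply it (and its conjugate) in the correct weight regime, possibly replacing $F$ by $\bar F$ at the outset. The other mildly delicate point is the very last step: pinning down which elements of $\mathbb{C}[y^\pm]\cap \mathcal{M}^!_{r,s}(\Gamma_0(N),\chi)$ with $r,s\geq 0$ survive — this is just noting a weight-$(r,s)$ element of $\mathbb{C}[y^\pm]$ is a scalar multiple of $y^{-(r+s)/2}$ (so $r+s$ must be even, consistent with $\chi(-1) = (-1)^{r+s}$ forcing $\chi$ trivial or... actually $y^{-k}$ has trivial character, so this also pins down that only the trivial-character sector contributes, but that is automatic here) and $-(r+s)/2 \leq 0$, giving exactly $\mathbb{C}[y^{-1}]$. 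Everything else is a routine combination of Lemmas \ref{lemmaker}, \ref{lemmadf=0}, \ref{oplem2} and the maximality clause in the definition.
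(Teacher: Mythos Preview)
Your outline matches the paper's approach, which simply cites Brown's argument (Proposition~5.2 of \cite{brown3}) together with Lemmas~\ref{lemmaker} and~\ref{lemmadf=0}; the inclusion $\mathbb{C}[y^{-1}]\subseteq\mathcal{MI}^!_0(\Gamma_0(N),\chi)$ is handled correctly. For the reverse inclusion there is a real gap, precisely at the spot you flag as the expected obstacle: Lemma~\ref{lemmadf=0} demands the \emph{specific} exponent $\overline\partial^{\,h'+1}G=0$, but from the weight constraint alone you only obtain $\overline\partial^{\,r+a+1}G=0$, which is a strictly larger power whenever $s-a>0$. ``Killed by some power of $\overline\partial$'' is not sufficient input for that lemma, so as written the conclusion $h'=0$ (and hence $a=0$, $r=s$) does not follow.

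The clean way to close this is to organise the descent as an induction on $\min(r,s)$. For the base case $s=0$ (taking $r\ge s$ after conjugating if necessary) one has $\partial F=0$ and $\overline\partial^{\,r+1}F=0$ with $h=r$, so Lemma~\ref{lemmadf=0} applies directly. For the inductive step with $s>0$: the element $\partial F$ has weight $(r+1,s-1)$ with $\min=s-1$, so by the inductive hypothesis $\partial F\in\mathbb{C}[y^{-1}]$; but the only nonzero weight-$(p,q)$ element of $\mathbb{C}[y^{-1}]$ is a multiple of $y^{-p}$ with $p=q$, hence $\partial F=0$. Likewise $\overline\partial^{\,h+1}F$ has weight $(s-1,r+1)$, again with $\min=s-1$, and by induction it vanishes. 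Now Lemma~\ref{lemmadf=0} applies to $F$ itself and yields $F\in\mathbb{C}y^{-r}$ when $r=s$, or $F=0$ otherwise. This is exactly the bookkeeping you anticipated would be needed; the point is that the induction, not a single application of the lemma to $G=\partial^aF$, is what supplies the required $\overline\partial^{\,h+1}$-vanishing.
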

\begin{proof}
    The proof for Proposition 5.2 of \cite{brown3} also works in this case when it is combined with Lemmas \ref{lemmaker} and \ref{lemmadf=0} above.
\end{proof}

Using this proposition, we can then view $\mathcal{MI}^!_1(\Gamma_0(N), \chi)$ as the largest subspace of $\bigoplus_{r,s \geq 0}$ $ \mathcal{M}^!_{r,s}(\Gamma_0(N),\chi)$ which satisfies the following: 
\begin{align*}
        \partial \mathcal{MI}^!_1(\Gamma_0(N), \chi) &\subset \mathcal{MI}^!_1(\Gamma_0(N),\chi) + M^!(\Gamma_0(N), \chi)[y^{\pm}] , \\[1mm]
    \overline{\partial} \mathcal{MI}^!_1(\Gamma_0(N),\chi) &\subset \mathcal{MI}^!_1(\Gamma_0(N),\chi) +      \overline {M^!(\Gamma_0(N), \chi)}[y^{\pm}],
\end{align*}
which is more reminiscent of \eqref{diffEis}. We can deduce from the above that the real-analytic Eisenstein series $\mathcal{E}_{u}(z,r,s,\chi)$ is a length-one modular iterated integral.
We are also able to extend Proposition 5.8 of \cite{brown3} to our more general class of modular iterated integrals, this allows us to decompose length-one modular iterated integrals as sums of eigenfunctions of the Laplaican. 

\begin{proposition}
    Any length-one modular iterated integral $F \in \mathcal{MI}^!_1(\Gamma_0(N), \chi)$ of weights $(r,s)$ can be uniquely decomposed as a linear combination
    \begin{equation*}
        F = \sum_{0 \leq k \leq \rm{min}\{ r,s\} } F_k,
    \end{equation*}
    where the $F_k$ also exist in $\mathcal{MI}^!_1(\Gamma_0(N), \chi)$  and have modular weights $(r,s)$. Furthermore, they are eigenfunctions of the Laplacian with eigenvalue $(k-1)(r+s-k)$:
    \begin{equation*}
        \left(\Delta - (k-1)(r+s-k) \right) F_k = 0.
    \end{equation*}
\end{proposition}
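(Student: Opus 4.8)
The plan is to mimic the proof of Proposition 5.8 of \cite{brown3}, using the Laplacian eigenspace decomposition from Lemma \ref{eigenexp} as the main structural tool. First I would recall that a length-one modular iterated integral $F\in\mathcal{MI}^!_1(\Gamma_0(N),\chi)$ of weights $(r,s)$ satisfies the characterisation derived just above the statement: $\partial F$ and $\overline{\partial}F$ lie in $\mathcal{MI}^!_1(\Gamma_0(N),\chi)$ up to the addition of terms in $M^!(\Gamma_0(N),\chi)[y^{\pm}]$ (resp. its conjugate). The key point is that such a form need not itself be an eigenfunction of $\Delta$, but, because $M^!(\Gamma_0(N),\chi)$ consists of honest holomorphic objects killed by $\partial$ and $\overline{\partial}$ in the appropriate sense, the commutator relations between $\Delta$ and $\partial,\overline{\partial}$ (equation \eqref{lap}) force $\Delta F$ to differ from a multiple of $F$ only by something again in the weakly holomorphic part. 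Iterating, one shows that the cyclic $\mathbb{C}[\Delta]$-module generated by $F$ inside $\mathcal{M}^!_{r,s}(\Gamma_0(N),\chi)$ is finite-dimensional: the relevant eigenvalues that can appear are exactly those of the form $(k-1)(r+s-k)$ for $0\le k\le r+s$, by Lemma \ref{eigenexp} (with $k_0=k$, so $\lambda=-k(k+r+s-1)$, which after reindexing gives $(k-1)(r+s-k)$).

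Next I would carry out the decomposition itself. Since $\Delta$ acts on the finite-dimensional space $V:=\mathbb{C}[\Delta]\cdot F$ with eigenvalues among the finite list $\{(k-1)(r+s-k):0\le k\le \min\{r,s\}\}$ — the restriction to $k\le\min\{r,s\}$ coming from the requirement $r,s\ge 0$ together with the bidegree bookkeeping for $\partial,\overline{\partial}$ on weakly holomorphic forms of non-negative weight — one decomposes $V$ into generalised eigenspaces and then argues that $\Delta$ is in fact semisimple on $V$. Semisimplicity follows because each generalised eigenvector, being a real-analytic modular form, has a Fourier-type expansion \eqref{fourier}, and Lemma \ref{oplem2} shows $\Delta$ acts on monomials $y^jq^m\bar q^n$ (with $m,n\ge 0$ here) by genuine scalars plus strictly lower-order-in-$y$ terms, so any Jordan block would produce an expansion inconsistent with \eqref{fourier}; alternatively one quotes the eigenfunction structure of Lemma \ref{eigenexp} directly. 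This yields $F=\sum_k F_k$ with $(\Delta-(k-1)(r+s-k))F_k=0$, and uniqueness is immediate from distinctness of the eigenvalues over the allowed range. Finally I would check $F_k\in\mathcal{MI}^!_1(\Gamma_0(N),\chi)$: each $F_k$ is a polynomial in $\Delta$ applied to $F$ (spectral projection), $\Delta$ preserves $\mathcal{M}^!_{r,s}$, and the defining inclusions \eqref{MIeq} are stable under $\Delta$ because $\Delta$ commutes with the filtration-by-length structure and sends $M^!(\Gamma_0(N),\chi)[y^{\pm}]$ into itself; hence the projections land back in $\mathcal{MI}^!_1$.

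The main obstacle I anticipate is establishing that the cyclic module $V=\mathbb{C}[\Delta]\cdot F$ is genuinely finite-dimensional with eigenvalues confined to the stated finite set, and that $\Delta$ acts semisimply on it. The finiteness requires controlling how the weakly holomorphic "error terms" in $\partial F,\overline{\partial}F$ feed back into $\Delta F$: one must verify that applying $\Delta$ does not generate an ever-growing tower of independent weakly holomorphic contributions, which uses that $M^!(\Gamma_0(N),\chi)[y^{\pm}]$ is itself annihilated by the relevant second-order combination up to lower length, i.e. that length-zero pieces $\mathbb{C}[y^{-1}]$ (by \eqref{MI0}) are $\Delta$-stable. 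The semisimplicity is the other delicate point, since a priori $\Delta$ could have nilpotent part (as it does on general $\mathcal{HM}^!$ spaces when eigenvalues coincide — note Lemma \ref{eigenexp} allows $k_0=1-r-s-k_0$); here one rules this out precisely by restricting to $k\le\min\{r,s\}$, where the eigenvalues $(k-1)(r+s-k)$ are pairwise distinct, so no two summands can interact, and by invoking that the pieces $F^h,F^a$ in Lemma \ref{eigenexp} are themselves honest eigenfunctions. Everything else — convergence, modularity, the explicit eigenvalue formula — is either immediate or a direct transcription of \cite{brown3}, so I would keep the write-up short and point to that reference for the routine parts, as the authors do elsewhere in this section.
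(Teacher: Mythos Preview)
The paper does not supply its own proof of this proposition; it simply states the result as an extension of Proposition~5.8 of \cite{brown3} and moves on. Your plan to transcribe Brown's argument, using Lemma~\ref{eigenexp} as the structural input and spectrally decomposing with respect to $\Delta$, is therefore exactly the intended approach, and the overall outline is correct.

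A couple of places in your sketch are looser than what Brown actually does and would need tightening if you wrote this out. First, the finite-dimensionality of $\mathbb{C}[\Delta]\cdot F$ is not obtained by a vague ``iterating'' argument on commutators; rather, one bounds the powers of $y$ that can occur in the expansion of $F$ using the explicit shape of $F^h$, $F^a$, $F^0$ in Lemma~\ref{eigenexp} (the range $k'_0\le j\le -s$ resp.\ $-r$), and this directly constrains the possible eigenvalues to the finite list $(k-1)(r+s-k)$ with $0\le k\le\min\{r,s\}$. Second, your semisimplicity argument conflates two separate points: distinctness of the eigenvalues over $0\le k\le\min\{r,s\}$ (which you verify correctly) rules out interaction \emph{between} eigenspaces, but a nontrivial Jordan block at a \emph{single} eigenvalue must be excluded separately. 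The clean way is to observe that a nontrivial nilpotent, i.e.\ $(\Delta-\lambda)G\ne 0$ with $(\Delta-\lambda)^2G=0$, forces logarithmic terms in $y$ in the constant Fourier mode (solve the ODE in $y$), which are not permitted by the expansion \eqref{fourier}; this is the content behind your appeal to Lemma~\ref{eigenexp}. Finally, the check that each $F_k$ lies in $\mathcal{MI}^!_1$ does go through as you say, since the spectral projectors are polynomials in $\Delta$ and $\Delta$ preserves the defining inclusions \eqref{MIeq}.
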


\section{L-series for real-analytic modular forms}\label{Lseriessec}

Fix $r,s\in\mathbb{Z}$. We start by defining the matrix
\begin{equation*}
W_N=
\begin{pmatrix}
    0 & -N^{-1/2} \\
    N^{1/2} & 0 
\end{pmatrix} \!,
\end{equation*}
which will be useful throughout this section, along with the following lemma:

\begin{lemma}\label{lemG|F}
Let $F \in \mathcal{M}^!_{r,s}(\Gamma_0(N), \chi)$ and set $G \coloneqq F||_{r,s} W_N$, then
\begin{equation*}
    G||_{r,s} \gamma = \overline{\chi}(d) G, \qquad \forall \ \gamma = \begin{psmallmatrix}
        a & b \\
        c & d
    \end{psmallmatrix} \in \Gamma_0(N)
\end{equation*}
\end{lemma}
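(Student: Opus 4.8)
The plan is to exploit the Fricke-type involution $W_N$ and the cocycle property of the double slash operator. The key algebraic fact is that conjugation by $W_N$ normalises $\Gamma_0(N)$: for $\gamma = \begin{psmallmatrix} a & b \\ c & d \end{psmallmatrix} \in \Gamma_0(N)$, a direct matrix computation gives
\begin{equation*}
W_N \gamma W_N^{-1} = \begin{pmatrix} d & -c/N \\ -Nb & a \end{pmatrix} \eqqcolon \gamma' \in \Gamma_0(N),
\end{equation*}
since $c \equiv 0 \bmod N$ ensures $c/N \in \mathbb{Z}$, the determinant is still $1$, and the lower-left entry $-Nb \equiv 0 \bmod N$. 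First I would verify this identity (noting $W_N^{-1} = \begin{psmallmatrix} 0 & N^{-1/2} \\ -N^{1/2} & 0 \end{psmallmatrix}$, so $W_N^2 = -I$ and $W_N^{-1} = -W_N$, hence $W_N \gamma W_N^{-1} = W_N \gamma W_N \cdot (-I)^{-1}$, which amounts to the same computation up to sign — care must be taken here, but since $-I \in \Gamma_0(N)$ acts by $\chi(-1) = (-1)^{r+s}$ and is central, it is harmless).

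Next I would compute $G ||_{r,s} \gamma = (F ||_{r,s} W_N) ||_{r,s} \gamma = F ||_{r,s}(W_N \gamma)$, using that $||_{r,s}$ is a group action (valid on $GL^+(2,\mathbb{R})$, and $W_N$ has positive determinant $1$). Then I would rewrite $W_N \gamma = (W_N \gamma W_N^{-1}) W_N = \gamma' W_N$, so
\begin{equation*}
G ||_{r,s} \gamma = F ||_{r,s}(\gamma' W_N) = (F ||_{r,s} \gamma') ||_{r,s} W_N.
\end{equation*}
Since $\gamma' \in \Gamma_0(N)$ and $F \in \mathcal{M}^!_{r,s}(\Gamma_0(N), \chi)$, the transformation law \eqref{transform} (in its double-slash form $F ||_{r,s} \gamma' = \chi(d_{\gamma'}) F$ where $d_{\gamma'}$ is the lower-right entry of $\gamma'$, which equals $a$) gives $F ||_{r,s} \gamma' = \chi(a) F$. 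Therefore $G ||_{r,s} \gamma = \chi(a) (F ||_{r,s} W_N) = \chi(a) G$. Finally, using the standard identity $\chi(a) = \overline{\chi(d)} = \overline{\chi}(d)$ for $\gamma \in SL_2(\mathbb{Z})$ (recorded in the Preliminaries, since $ad - bc = 1$ forces $ad \equiv 1 \bmod N$), we conclude $G ||_{r,s} \gamma = \overline{\chi}(d) G$, as required.

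The main obstacle — really the only subtle point — is bookkeeping the sign and the precise lower-right entry of the conjugate matrix $\gamma'$, making sure one extracts the character value from the correct entry and handles the central element $-I$ consistently; the scalar $\det(W_N)^{(r+s)/2} = 1$ contributes nothing. Everything else is the functorial formalism of the slash action plus the elementary observation that $W_N$ normalises $\Gamma_0(N)$. I would keep the computation of $W_N \gamma W_N^{-1}$ explicit in the write-up since it is the crux, and relegate the group-action manipulations to one or two lines.
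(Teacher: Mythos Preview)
Your proof is correct and follows essentially the same route as the paper's: both explicitly compute $W_N \gamma W_N^{-1} = \begin{psmallmatrix} d & -c/N \\ -Nb & a \end{psmallmatrix} \in \Gamma_0(N)$, then use the group-action property of $||_{r,s}$ together with the transformation law for $F$ (applied to this conjugate, whose lower-right entry is $a$) and the identity $\chi(a)=\overline{\chi}(d)$. Your parenthetical about $W_N^{-1}=-W_N$ and the role of $-I$ is harmless but unnecessary, since the direct computation of $W_N\gamma W_N^{-1}$ already gives the stated matrix without any sign ambiguity.
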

\begin{proof}
    If $\gamma \in \Gamma_0(N)$ is of the form $\begin{psmallmatrix}
a &  b \\
c &  d  
\end{psmallmatrix} $, then $ c/N \in \mathbb{Z}$ and
\begin{equation*}
W_N \gamma W_N^{-1} = \begin{pmatrix}
    d & -c/N \\
    - bN & a
\end{pmatrix} \in \Gamma_0(N).
\end{equation*}
Therefore, we see that
\begin{equation*}
G||_{r,s} \gamma = F||_{r,s} W_N ||_{r,s} \gamma = F||_{r,s} W_N \gamma W_N^{-1}||_{r,s} W_N = \overline{\chi}(d) F||_{r,s} W_N = \overline{\chi}(d) G,
\end{equation*}
where we used the fact that, since $N|c$, $\overline\chi(d) = \chi(a)$. 
\end{proof}

We also introduce the single slash operator, this is defined for a function $\varphi\colon \mathbb{R_+} \to \mathbb{C}$ and an integer $a$ by 
\begin{equation*}
    (\varphi|_a W_N)(x)\coloneqq (Nx)^{-a}\varphi\left(\dfrac{1}{Nx}\right), \qquad \forall \ x \in \mathbb{R_+}.
\end{equation*}
Now, for each smooth function $\varphi:\mathbb{R}_+ \to \mathbb{C}$, we define the Laplace transform by
\begin{equation*}
    (\mathcal{L}\varphi)(s) = \int^{\infty}_{0} e^{-st} \varphi(t) dt,
\end{equation*}
for $s \in \mathbb{C}$ such that the integral converges absolutely. We also define
\begin{equation*}
    \varphi_s(x) \coloneqq \varphi(x) x^{s-1}
\end{equation*}
 where, for example, $  \varphi_1(x) = \varphi(x) $. We denote the space of piece-wise smooth complex-valued functions on $\mathbb{R}$ by $C(\mathbb{R}, \mathbb{C})$. Now let $M \in \mathbb{Z}_{>0}$ and suppose $F$ is a function on $\mathbb{H}$ given by the series 
  
\begin{equation}\label{F/M}
F(z) = \sum_{k\in S} \sum_{n\geq n_0} y^k a_n^{(k)} q^{n/M} + \sum_{k\in S} \sum_{n\geq n_0} y^k b_n^{(k)} \bar{q}^{n/M}
\end{equation}
where $n_0\in\mathbb{Z}$ and $S\subset\mathbb{Z}$ is a finite subset. We then let $\mathcal{F}_F$ be the space of functions $\varphi \in  C(\mathbb{R}, \mathbb{C})$ such that the integral defining $(\mathcal{L}\varphi)(s)$ converges absolutely for all $s$ with ${\rm Re}(s) \geq 2\pi n_0$ and we have the following convergence for all $k\in S$:
\begin{equation}\label{Ffconv}
\sum_{k\in S}\sum_{n\geq n_0}|a_{n}^{(k)}|\mathcal{L}|\varphi_{k+1}|\left(\frac{2\pi n}{M}\right)+\sum_{k\in S}\sum_{n\geq n_0}|b_{n}^{(k)}|\mathcal{L}|\varphi_{k+1}|\left(\frac{2\pi n}{M}\right)<\infty.
 \end{equation}

\begin{definition}
    Let $M \in \mathbb{Z}_{>0}$ and suppose $F$ is a function on $\mathbb{H}$ given by the expansion \eqref{F/M}. We define the $L$-series associated to $F$ to be the map  $L_F: \mathcal{F}_F \to \mathbb{C}$, for $\varphi \in \mathcal{F}_F$, given by
    \begin{align*}
        &L_F(\varphi)=\sum_{k\in S}\sum_{n\geq n_0}a_{n}^{(k)}\mathcal{L}\varphi_{k+1}\left(\frac{2\pi n}{M}\right)+\sum_{k\in S}\sum_{n\geq n_0}b_{n}^{(k)}\mathcal{L}\varphi_{k+1}\left(\frac{2\pi n}{M}\right)\!.
    \end{align*}
\end{definition}

We also have the following integral expression for the $L$-series:

\begin{lemma}\label{integralformlemma}
When $F$ is of the form $(\ref{F/M})$ and $\varphi\in\mathcal{F}_F$, then
\begin{equation}\label{integralform}
L_F(\varphi)=\int^{\infty}_0 F(iy)\varphi(y) dy.
\end{equation}
\end{lemma}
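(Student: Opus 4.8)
The plan is to substitute $z=iy$ into the series expansion \eqref{F/M}, integrate term by term against $\varphi(y)$, and recognise each resulting integral as a value of the Laplace transform. First I would observe that when $z=iy$ with $y>0$ we have $q^{n/M}=e^{2\pi i z \cdot n/M}=e^{-2\pi n y/M}$ and similarly $\bar q^{n/M}=e^{-2\pi n y/M}$, so that
\begin{equation*}
F(iy)=\sum_{k\in S}\sum_{n\geq n_0}\bigl(a_n^{(k)}+b_n^{(k)}\bigr)y^k e^{-2\pi n y/M}.
\end{equation*}
Multiplying by $\varphi(y)$ and using $\varphi_{k+1}(y)=y^k\varphi(y)$, the integrand of the right-hand side of \eqref{integralform} becomes $\sum_{k\in S}\sum_{n\geq n_0}\bigl(a_n^{(k)}+b_n^{(k)}\bigr)e^{-2\pi n y/M}\varphi_{k+1}(y)$, and integrating a single term in $y$ over $(0,\infty)$ yields exactly $\bigl(a_n^{(k)}+b_n^{(k)}\bigr)(\mathcal{L}\varphi_{k+1})(2\pi n/M)$ by the definition of the Laplace transform. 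Summing over $n$ and $k$ reproduces the definition of $L_F(\varphi)$.

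The only real content is justifying the interchange of the (double) sum over $k\in S$, $n\geq n_0$ with the integral $\int_0^\infty \cdot\, dy$. Here I would invoke the Fubini--Tonelli theorem: the absolute-convergence hypothesis built into the definition of $\mathcal{F}_F$, namely \eqref{Ffconv}, states precisely that
\begin{equation*}
\sum_{k\in S}\sum_{n\geq n_0}|a_n^{(k)}|\,\mathcal{L}|\varphi_{k+1}|\!\left(\frac{2\pi n}{M}\right)+\sum_{k\in S}\sum_{n\geq n_0}|b_n^{(k)}|\,\mathcal{L}|\varphi_{k+1}|\!\left(\frac{2\pi n}{M}\right)<\infty,
\end{equation*}
and since $\mathcal{L}|\varphi_{k+1}|(2\pi n/M)=\int_0^\infty e^{-2\pi n y/M}|\varphi_{k+1}(y)|\,dy$, this is exactly the statement that $\sum_{k,n}\int_0^\infty \bigl(|a_n^{(k)}|+|b_n^{(k)}|\bigr)e^{-2\pi n y/M}|\varphi_{k+1}(y)|\,dy<\infty$. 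Since $S$ is finite and the inner sum over $n$ is over a countable index set, Tonelli's theorem applies to the (counting measure)$\times$(Lebesgue measure) product, so the interchange is legitimate and in particular $F(iy)\varphi(y)$ is integrable on $(0,\infty)$, making the left-hand integral well defined.

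The main (and essentially only) obstacle is this convergence bookkeeping — in particular making sure that the finiteness of \eqref{Ffconv} really does control $\int_0^\infty|F(iy)\varphi(y)|\,dy$, which requires noting that for each fixed $y$ the series for $F(iy)$ converges absolutely (again a consequence of the same hypothesis, or of condition $(ii)$ together with the assumption $\varphi\in C(\mathbb{R},\mathbb{C})$ with the stated Laplace-transform convergence on $\mathrm{Re}(s)\geq 2\pi n_0$). Once Tonelli has been applied to pass from absolute values back to the signed series, Fubini gives the identity, and no further estimates are needed.
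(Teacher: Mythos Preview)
Your proposal is correct and follows exactly the approach the paper takes: expand $F(iy)$ term by term, integrate against $\varphi$, and invoke the absolute-convergence hypothesis \eqref{Ffconv} to justify interchanging sum and integral. The paper's own proof is a one-line sketch (``write out the expansion of $F$ inside the integral and use the absolute convergence condition relating to $\varphi$ to exchange sums and integrals''), so your write-up is simply a fuller version of the same argument.
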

\begin{proof}
Write out the expansion of $F$ inside the integral and use the absolute convergence condition relating to  $\varphi$ to exchange sums and integrals. $\square$
\end{proof}

Now, suppose $F$ is a function on $\mathbb{H}$ given by the expansion \eqref{F/M} with $M=1$. Furthermore, let $D$ be a positive integer and $\chi$ a Dirichlet character modulo $D$. We then define the twisted function $F_{\chi}$ by
\begin{align}\label{Fchidef}
    F_{\chi}(z) \coloneqq & D^{(r+s)/2} \sum_{\mu \, \rm{mod} \, D} \overline{\chi(\mu)} \left(F\Big|\Big|_{r,s} \! \begin{pmatrix}
D^{-1/2} &  \mu \, D^{-1/2} \\
0 &  D^{1/2}
\end{pmatrix} \! \right)(z) .
\end{align}
This definition can also be written as
\begin{align*}
F_\chi (z) &= \sum_{\mu \, \rm{mod} \, D} \overline{\chi(\mu)}  \,f\left(\frac{z + \mu}{D}\right)
\\
&= \sum_{\mu \, \rm{mod} \, D} \overline{\chi(\mu)} 
 \left[ \sum_{k\in S} \left(\frac{y}{D}\right)^k \sum_{n\geq n_0} a_n^{(k)} e^{2\pi i n (z+\mu)/D} + \sum_{k\in S} \left(\frac{y}{D}\right)^k \sum_{n\geq n_0}  b_n^{(k)} e^{-2\pi i n (\bar z +\mu)/D}\right] .
\end{align*}

We can rewrite this equation further by introducing the generalised Gauss sum, which, for a character $\chi$ and integer $n$, is given by
\begin{equation*}
\tau_{\chi}(n) \coloneqq \sum_{\mu \, \rm{mod} \, D} \chi(\mu) e^{\frac{2\pi i n \mu}{D} }.
\end{equation*}
Using this expression, we then have 
\begin{equation*}
    F_\chi (z) =  \sum_{k\in S} \sum_{n\geq n_0} \left(\frac{y}{D}\right)^k \tau_{\bar\chi}(n) a_n^{(k)} q^{n/D} 
 + \sum_{k\in S} \sum_{n\geq n_0} \left(\frac{y}{D}\right)^k \overline{\tau_\chi (n)} b_n^{(k)} \bar q^{n/D}  
\end{equation*}
and, therefore, we can easily see that an expansion of the form given by \eqref{F/M} is preserved by twists of Dirichlet functions.
We will prove below that eigenfunctions of $\Delta$ are also preserved by such twists.

\begin{lemma}
Suppose $F$ is real analytic and $\Delta_{r,s} F=\lambda F$ for some eigenvalue $\lambda$. Then $\Delta_{r,s}F_\chi=\lambda F_\chi$ for any Dirichlet character $\chi$.
\end{lemma}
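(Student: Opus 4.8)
The plan is to show directly that the twisting operation $F \mapsto F_\chi$ commutes with the Laplacian $\Delta_{r,s}$. Recall from \eqref{Fchidef} that $F_\chi$ is, up to the scalar $D^{(r+s)/2}$, a finite linear combination of the functions $F \big|\big|_{r,s} g_\mu$ where $g_\mu = \begin{psmallmatrix} D^{-1/2} & \mu D^{-1/2} \\ 0 & D^{1/2} \end{psmallmatrix} \in GL^+(2,\mathbb{R})$. Since $\Delta_{r,s}$ is linear and the coefficients $\overline{\chi(\mu)}$ are constants, it suffices to prove that $\Delta_{r,s}\bigl(F\big|\big|_{r,s} g_\mu\bigr) = \lambda\, \bigl(F\big|\big|_{r,s} g_\mu\bigr)$ for each fixed $\mu$, i.e.\ that the action of the double slash operator $||_{r,s}g$ preserves eigenfunctions of $\Delta_{r,s}$ for any $g \in GL^+(2,\mathbb{R})$.

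The key observation is that this last statement is essentially Lemma \ref{efinv}, which I would invoke after reducing from $GL^+(2,\mathbb{R})$ to $SL_2(\mathbb{R})$. First I would write $g_\mu = z_\mu\, \tilde g_\mu$ where $z_\mu = D^{-1/2} I$ is a scalar matrix and $\tilde g_\mu = \begin{psmallmatrix} 1 & \mu \\ 0 & D \end{psmallmatrix}$; more to the point, $\det(g_\mu) = 1$, so $g_\mu \in SL_2(\mathbb{R})$ already, and no rescaling is needed. Then Lemma \ref{efinv} applies verbatim with $\gamma = g_\mu$: since $\Delta_{r,s}F = \lambda F$, we get $\Delta_{r,s}(F||_{r,s} g_\mu) = \lambda (F||_{r,s} g_\mu)$. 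Summing over $\mu \bmod D$ against the constants $\overline{\chi(\mu)}$ and multiplying by $D^{(r+s)/2}$ yields $\Delta_{r,s} F_\chi = \lambda F_\chi$, as claimed.

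I would also remark on why this argument is legitimate at the level of the function $F$ appearing here: the expansion \eqref{F/M} (with $M=1$) shows $F$ is real analytic on $\mathbb{H}$, so $\Delta_{r,s}F$ makes sense, and the computation of $F_\chi$'s expansion given just before the lemma shows $F_\chi$ is again of the form \eqref{F/M}, so $\Delta_{r,s}F_\chi$ makes sense too. If one prefers a self-contained check, one can alternatively substitute the explicit $q$-expansion of $F_\chi$ into the formula \eqref{lap} for $\Delta_{r,s}$ and use Lemma \ref{oplem2}, which gives the action of $\partial_r,\overline\partial_s$ on monomials $y^j q^m \bar q^n$; since each term $\tau_{\bar\chi}(n) a_n^{(k)} (y/D)^k q^{n/D}$ is a scalar multiple of such a monomial with the same weight data, the eigenvalue relation is inherited termwise. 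Either route works; the slash-operator argument is cleaner.

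The main (and only) obstacle is a bookkeeping one rather than a conceptual one: making sure the matrix in \eqref{Fchidef} genuinely lies in $SL_2(\mathbb{R})$ (it does, since its determinant is $D^{-1/2}\cdot D^{1/2} = 1$) so that Lemma \ref{efinv} can be cited without modification, and confirming that termwise differentiation of the series defining $F_\chi$ is justified — which follows from real analyticity together with the locally uniform convergence of $q$-expansions on $\mathbb{H}$. No new estimates are required.
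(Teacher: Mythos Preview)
Your proposal is correct and follows essentially the same approach as the paper: you observe that each summand $F||_{r,s}g_\mu$ in the definition of $F_\chi$ is an eigenfunction of $\Delta_{r,s}$ with eigenvalue $\lambda$ by Lemma~\ref{efinv} (since $g_\mu\in SL_2(\mathbb{R})$), and then use linearity. The extra remarks about real analyticity and the alternative termwise verification are fine but unnecessary for the argument.
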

\begin{proof} By Lemma \ref{efinv}, each summand in the expression (\ref{Fchidef}) of $F_\chi$ is an eigenfunction of $\Delta_{r,s}$ with eigenvalue $\lambda$. And since $F_\chi$ is a finite sum of such eigenfunctions, it is itself an eigenfunction.
\end{proof}

\begin{theorem}\label{functionaleq}
Let $r,s \in \mathbb{Z}$, $N,D \in \mathbb{N}$ with $(D,N)=1$, and let $\psi$ and $\chi$ be Dirichlet characters modulo $N$ and $D$, respectively. Furthermore, let $F\in\mathcal{M}^!_{r,s}(\Gamma_0(N), \psi)$ have an expansion of the form
\begin{equation}
F(z) = \sum_{k\in S} \sum_{n\geq n_0} y^k a_n^{(k)} q^n + \sum_{k\in S} \sum_{n\geq n_0} y^k b_n^{(k)} \bar{q}^n,
\end{equation}
with coefficients not growing too rapidly e.g.
\begin{equation}\label{coeffgrowth}
a_n^{(k)},b_n^{(k)}\ll e^{Cn^{1/t}},
\end{equation}
for all $k\in S$, and some constants $C>0$ and $t>1$.
Suppose a similar expansion and coefficient bounds for
\begin{align*}
G \coloneqq F||_{r,s} W_N.\\
\end{align*}
Finally, we define the test function space
\begin{align*}
\mathcal{F}_{F,G} \coloneqq \bigcap_{\chi\mod D}\left\{\varphi\in\mathcal{F}_{F_\chi}:\varphi|_{2-r-s} W_N\in\mathcal{F}_{G_{\bar\chi}}\right\}.
\end{align*}
Then $\mathcal{F}_{F,G}$ is non-empty and, for $\varphi \in \mathcal{F}_{F,G}$, we have the following functional equation:
\begin{align}
     L_{F_\chi}(\varphi) =  \frac{i^{r-s}\chi(-N)\psi(D)}{N^{\frac{r+s-2}{2}}} L_{G_{\bar\chi}}(\varphi|_{2-r-s} W_N) \label{LFX1b} .
\end{align}

\end{theorem}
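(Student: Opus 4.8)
The plan is to reduce the functional equation to the integral representation of the $L$-series from Lemma~\ref{integralformlemma} together with the transformation behaviour of $F$ under $W_N$ encoded in Lemma~\ref{lemG|F}. First I would record the key twisting identity: using the definition \eqref{Fchidef} of $F_\chi$ and $G_{\bar\chi}$, together with the fact that the matrices $\begin{psmallmatrix} D^{-1/2} & \mu D^{-1/2} \\ 0 & D^{1/2} \end{psmallmatrix}$ and $W_N$ can be conjugated/combined (using $(D,N)=1$), one obtains a clean relation of the shape
\begin{equation*}
F_\chi \big|\big|_{r,s} W_N = c \cdot G_{\bar\chi}, \qquad c = \chi(-N)\psi(D) N^{1-\frac{r+s}{2}}\text{-type constant},
\end{equation*}
where the precise constant comes from tracking determinants, the character values $\chi(-N)$ and $\psi(D)$ (the latter via Lemma~\ref{lemG|F}, since $G$ transforms under $\Gamma_0(N)$ with character $\overline\psi$), and the Gauss-sum bookkeeping. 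This is really the arithmetic heart of the statement and I expect it to be the main obstacle: one must carefully match the coset representatives mod $D$ on each side and verify that $\overline\chi$ on the $G$-side is forced by $\chi(-N)$ together with $\chi(\mu)\overline{\chi(\mu')}$ relations, invoking $(D,N)=1$ to guarantee that the relevant matrix lies back in the correct $\Gamma_0(ND)$-type group.

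\textbf{Main computation.} Granting the twisting identity, I would compute $L_{F_\chi}(\varphi)$ via Lemma~\ref{integralformlemma} as $\int_0^\infty F_\chi(iy)\varphi(y)\,dy$. The hypothesis $\varphi\in\mathcal{F}_{F,G}$ guarantees absolute convergence so that this integral representation is valid and all manipulations below are justified. Substitute $y\mapsto 1/(Ny)$ in the integral; since $F_\chi(iy) = F_\chi\big(W_N \cdot \tfrac{i}{Ny}\big)$ combined with $j(W_N, iy) = N^{1/2}iy$, the double-slash definition gives
\begin{equation*}
F_\chi(iy) = N^{-\frac{r+s}{2}} (N^{1/2} i y)^{r}(N^{1/2}iy)^{s} \big(F_\chi\big|\big|_{r,s}W_N\big)\Big(\frac{i}{Ny}\Big) = i^{r+s} N^{\frac{r+s}{2}} y^{r+s}\, c\, G_{\bar\chi}\Big(\frac{i}{Ny}\Big),
\end{equation*}
after applying the twisting identity. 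Plugging this in and changing variables $y\mapsto 1/(Ny)$ turns $\int_0^\infty F_\chi(iy)\varphi(y)\,dy$ into a constant times $\int_0^\infty G_{\bar\chi}(iy)\, (Ny)^{-(r+s)}\varphi\big(\tfrac{1}{Ny}\big)\,dy = \int_0^\infty G_{\bar\chi}(iy)\,(\varphi|_{2-r-s}W_N)(y)\,dy$, where the exponent $2-r-s$ arises because the Jacobian of $y\mapsto 1/(Ny)$ contributes an extra factor $N^{-1}y^{-2}$. The latter integral is $L_{G_{\bar\chi}}(\varphi|_{2-r-s}W_N)$ by Lemma~\ref{integralformlemma} again, using $\varphi|_{2-r-s}W_N\in\mathcal{F}_{G_{\bar\chi}}$.

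\textbf{Constant tracking and non-emptiness.} It then remains to collect all the scalar factors: $i^{r+s}$ from the $j$-factors, $N^{\frac{r+s}{2}}$ from the determinant normalisation, the factor from the $W_N$-Jacobian, and the character/Gauss-sum constant $c$ from the twisting identity. Using $i^{r+s}\cdot i^{-2s} = i^{r-s}$ (the $i^{-2s}$ being absorbed when one rewrites things consistently — I would reconcile the $i^{r+s}$ against the claimed $i^{r-s}$ by noting that the other relevant power of $i$ hides inside $c$ via $\chi(-N)$ and the conjugation $W_N^2 = -\mathrm{Id}$), these should combine to precisely $\dfrac{i^{r-s}\chi(-N)\psi(D)}{N^{(r+s-2)/2}}$; I would verify this by checking the special case $r=s$, $N=D=1$ against the classical Hecke functional equation as a consistency sanity check. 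Finally, non-emptiness of $\mathcal{F}_{F,G}$ follows as in \cite{diamantisLseries}: the coefficient bound \eqref{coeffgrowth} with $t>1$ ensures that functions of the form $\varphi(y) = e^{-\alpha y^{\epsilon}}$ (for suitable $\alpha>0$ and $0<\epsilon<1$ chosen relative to $t$), or compactly supported smooth bumps, lie in every $\mathcal{F}_{F_\chi}$ and are stable under $|_{2-r-s}W_N$, so the finite intersection over characters mod $D$ is still non-empty.
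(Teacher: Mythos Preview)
Your approach matches the paper's exactly: first derive the twisting identity $F_\chi\big|\big|_{r,s}W_N = \chi(-N)\psi(D)\,G_{\bar\chi}$ (the constant $c$ is precisely that, with no extra $N$-power and no Gauss sums --- those appear only in the expansion of $F_\chi$, not in the identity itself), then combine Lemma~\ref{integralformlemma} with the substitution $y\mapsto 1/(Ny)$ and collect the $j$-factors and Jacobian to obtain the stated constant. For non-emptiness the paper simply shows that compactly supported piecewise continuous $\varphi$ lie in $\mathcal{F}_{F,G}$, which is cleaner than your exponential-decay candidates but either route works under \eqref{coeffgrowth}.
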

\begin{proof}
Let $S_c(\mathbb{R_+})$ be a set of complex-valued, compactly supported, piecewise continuous functions $\varphi:\mathbb{R_+}\rightarrow\mathbb{C}$ such that for all $y>0$, there exists a $\varphi\in S_c(\mathbb{R_+})$ with the property that $\varphi (y)\neq 0$. If $\varphi$ is in $S_c(\mathbb{R_+})$ then so is $\varphi|_{2-r-s}W_N$. Therefore, it suffices to show that $S_c(\mathbb{R_+})\subset\mathcal{F}_{F_\chi}\cap\mathcal{F}_{G_{\bar\chi}}$ for all $\chi$ mod $D$. For $\varphi\in S_c(\mathbb{R_+})$, suppose Supp$(\varphi)\subset(c_1,c_2)$, $c_1,c_2>0$ and $|\varphi(y)|\leq C_\varphi$ for some $C_\varphi>0$ for all $y\in$ Supp$(\varphi)$. Then
\begin{align*}
    \mathcal{L}|\varphi_{k+1}|\left(\frac{2\pi n}{D}\right) & = 
    \int^{\infty}_0e^{\frac{-2\pi ny}{D}}|\varphi_{k+1}(y)|dy
    \\[2mm]
    & = \int^{c_2}_{c_1}e^{\frac{-2\pi ny}{D}}|y^k\varphi(y)|dy
    \\[2mm]
    & \leq C_\varphi(c_2-c_1)c_2^ke^{\frac{-2\pi n c_1}{D}}.
\end{align*}
Given the condition (\ref{coeffgrowth}) on the $a_{n}^{(k)},b_{n}^{(k)}$, and similar coefficients for $G$, and since $\tau_\chi(n)=O_D(1)$, the series defining $L_{F_{\chi}}(\varphi)$ and $L_{G_{\bar\chi}}(\varphi)$ converges for all $\chi\mod D$. And we deduce that $\varphi\in\mathcal{F}_{F,G}$.

Recalling the definition of $F_\chi$, we have
\begin{align*}
F_{\chi}||_{r,s} W_N & =  D^{(r+s)/2} \sum_{\mu \, \rm{mod} \, D} \overline{\chi(\mu)} \left(F\Big|\Big|_{r,s} \! \begin{pmatrix}
D^{-1/2} &  \mu \, D^{-1/2} \\
0 &  D^{1/2}
\end{pmatrix}  
W_N
\right)
\\
& = D^{(r+s)/2} \sum_{\mu \, \rm{mod} \, D} \overline{\chi(\mu)} \left(F\Big|\Big|_{r,s} \! W_N \, \Big|\Big|_{r,s} W_N^{-1}
\begin{pmatrix}
D^{-1/2} &  \mu \, D^{-1/2} \\
0 &  D^{1/2}
\end{pmatrix}  
W_N
\right).
\end{align*}
Using $G\coloneqq F||_{r,s} W_N$ and the identity
\begin{equation*}
W_N^{-1}
\begin{pmatrix}
D^{-1/2} &  \mu \, D^{-1/2} \\
0 &  D^{1/2}
\end{pmatrix}  
W_N =  \begin{pmatrix}
D &  - \nu \\
-N\mu &  (1+N\mu\nu)D^{-1}
\end{pmatrix}  
\begin{pmatrix}
D^{-1/2} &   \nu D^{-1/2} \\
0 &  D^{1/2}
\end{pmatrix},
\end{equation*}
for integers $\mu$ and $\nu$ such that ${\rm gcd}(\mu,D)=1$ and $N\mu\nu \equiv -1 \, {\rm mod} \, D$, the above becomes
\begin{align*}
 F_{\chi}||_{r,s} W_N &=  D^{(r+s)/2} \sum_{\mu \, \rm{mod} \, D} \overline{\chi(\mu)} \left(G\Big|\Big|_{r,s} \begin{pmatrix}
D &  - \nu \\
-N\mu &  (1+N\mu\nu)D^{-1}
\end{pmatrix}  
\begin{pmatrix}
D^{-1/2} &   \nu D^{-1/2} \\
0 &  D^{1/2}
\end{pmatrix} \!
\right)
\\
& =  \chi(-N) \psi(D) D^{(r+s)/2} \sum_{\nu \, \rm{mod} \, D} \chi(\nu) \left(G\Big|\Big|_{r,s}
\begin{pmatrix}
D^{-1/2} &   \nu D^{-1/2} \\
0 &  D^{1/2}
\end{pmatrix}\!
\right) \!.
\end{align*}
For the final equality we used Lemma \ref{lemG|F} and the fact that, due to the properties of the $\mu,\nu$ we have chosen, $\overline{\chi}(\mu)= \chi(-N)\chi(\nu)$. This allows us to deduce the equality
\begin{equation}\label{FX=GX}
F_\chi ||_{r,s} W_N = X(-N)\psi(D) G_{\bar{\chi}}.
\end{equation}
Making the change of variable $y \to 1/(Ny)$ in equation \eqref{integralform} and then using $(F_\chi||_{r,s} W_N) (iy) = i^{s-r}N^{(-r-s)/2}F_\chi (-1/Niy) y^{-r-s}$ gives
\begin{align*}
    L_{F_\chi}(\varphi) & = \dfrac{1}{N} \int^{\infty}_0 F_\chi \left(\frac{i}{Ny}\right) \, \varphi\left(\dfrac{1}{Ny}\right)  y^{-2} dy 
    \\[2mm]
    & = \dfrac{1}{N} \int^{\infty}_0 F_\chi \left(\frac{-1}{Niy}\right) \, \varphi\left(\dfrac{1}{Ny}\right)  y^{-2} dy 
    \\[2mm]
   & = \dfrac{i^{r-s}}{N^{\frac{-r-s+2}{2}}}\int^{\infty}_0  (F_\chi||_{r,s} W_N)(iy) \,   \varphi\left(\dfrac{1}{Ny}\right)   y^{r+s-2} dy 
    \\[2mm]
  & =  \dfrac{i^{r-s} X(-N)\psi(D)}{N^{\frac{-r-s+2}{2}}} \int^{\infty}_0   G_{\bar{\chi}}(iy) \,   \varphi\left(\dfrac{1}{Ny}\right)   y^{r+s-2} dy,
\end{align*}
where we used the equation \eqref{FX=GX} for the last equality.
Finally, using $(\varphi|_{2-r-s} W_N)(y) = (Ny)^{r+s-2} \varphi\big(1/(Ny)\big)$ we have
\begin{align*}
     L_{F_\chi}(\varphi) & = \dfrac{i^{r-s} X(-N)\psi(D)}{N^{\frac{r+s-2}{2}}}\int^{\infty}_0   G_{\bar{\chi}}\left(iy\right) \,   \big(\varphi |_{2-r-s}W_N\big)\!\big(y\big) \, dy \\
     & = \dfrac{i^{r-s} X(-N)\psi(D)}{N^{\frac{r+s-2}{2}}} \, L_{G_{\bar\chi}}(\varphi|_{2-r-s} W_N),
\end{align*}
which gives equation \eqref{LFX1b} as required.
\end{proof}

\subsection{Converse}

To prove our converse theorem, we use the following lemma, in analogy to \cite{Bum} (Lemma 1.9.2):

\begin{lemma}\label{BumLem}
Suppose $F$ has an expansion of the form
\begin{equation}\label{fexpform}
F(z)=\sum_{k\in S}y^k\sum_{n\geq n_0}a_n^{(k)}q^{n/M} + \sum_{k\in S}y^k\sum_{n\geq n_0}b_n^{(k)}\bar q^{n/M},
\end{equation}
for $S\subset\mathbb{Z}$ a finite subset and $n_0\in\mathbb{Z}$, and with the $a_n^{(k)},b_n^{(k)}\ll e^{Cn^{1/t}}$ as in \emph{(\ref{coeffgrowth})}. Further suppose that
\begin{equation}
F(iy)=\frac{\partial}{\partial x}F(iy)=0,
\end{equation}
for all $y>0$. Then $F(z)=0$ for all $z\in\mathbb{H}$.
\end{lemma}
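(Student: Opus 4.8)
The plan is to exploit the real-analyticity of $F$ together with the rigid structure of the expansion \eqref{fexpform}. The key observation is that the vanishing of $F$ and of $\partial_x F$ on the imaginary axis gives us, via the expansion, two families of conditions on the coefficients $a_n^{(k)}, b_n^{(k)}$ once we separate the $q$-part from the $\bar q$-part. Concretely, writing $z = iy$, we have $q^{n/M} = e^{-2\pi n y/M}$ and $\bar q^{n/M} = e^{-2\pi n y/M}$, so $F(iy) = \sum_{k\in S} y^k \sum_{n\geq n_0} (a_n^{(k)} + b_n^{(k)}) e^{-2\pi n y/M}$. Meanwhile, differentiating in $x$ brings down a factor $2\pi i n/M$ on the $q$-terms and $-2\pi i n/M$ on the $\bar q$-terms, so $\tfrac{\partial}{\partial x}F(z)\big|_{z=iy} = \tfrac{2\pi i}{M}\sum_{k\in S} y^k \sum_{n\geq n_0} n\,(a_n^{(k)} - b_n^{(k)}) e^{-2\pi n y/M}$. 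Hence the hypotheses force $\sum_{k\in S} y^k \sum_{n} (a_n^{(k)} + b_n^{(k)}) e^{-2\pi ny/M} = 0$ and $\sum_{k\in S} y^k \sum_{n} n(a_n^{(k)} - b_n^{(k)}) e^{-2\pi ny/M} = 0$ for all $y>0$.

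Next I would argue that each of these identities, which has the shape $\sum_{k\in S} y^k g_k(y) = 0$ with $g_k$ a convergent Dirichlet-type series $\sum_{n\geq n_0} c_n^{(k)} e^{-2\pi n y/M}$, forces every $c_n^{(k)}$ to vanish. The coefficient growth bound \eqref{coeffgrowth} guarantees absolute and locally uniform convergence of each $g_k$ on $y>0$, so each $g_k$ extends to a holomorphic function of $y$ on a right half-plane (indeed on $\mathrm{Re}(y) > 0$), and the finite linear combination $\sum_k y^k g_k(y)$ is holomorphic there too. The cleanest route is a uniqueness-of-exponents argument: one can recover the individual $c_n^{(k)}$ by an iterated limiting procedure, first isolating the top power of $y$ and then the dominant exponential. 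Precisely, multiply by $e^{2\pi n_0 y/M}$ and let $y\to\infty$ to kill all terms with $n > n_0$; what survives is $\sum_{k\in S} y^k c_{n_0}^{(k)}$, a polynomial in $y$ that must vanish identically, so $c_{n_0}^{(k)} = 0$ for all $k\in S$. Iterating on $n$ (the next exponent after $n_0$ is the smallest $n$ with some $c_n^{(k)}\neq 0$, and the same argument kills it) shows all coefficients vanish. Applying this to both identities: the first gives $a_n^{(k)} + b_n^{(k)} = 0$ for all $n, k$, and the second gives $n(a_n^{(k)} - b_n^{(k)}) = 0$, hence $a_n^{(k)} = b_n^{(k)} = 0$ for all $n\geq \max(n_0, 1)$; the $n=0$ term (if $n_0 \leq 0$) is handled by combining with the first identity's conclusion $a_0^{(k)} = -b_0^{(k)}$ together with re-examining $F(iy)$ once higher terms are known to vanish. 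In all cases we conclude $a_n^{(k)} = b_n^{(k)} = 0$ for every $n$ and $k$, so $F\equiv 0$.

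The main obstacle is making the uniqueness-of-exponents step fully rigorous in the presence of the polynomial prefactors $y^k$ and the lower-order (possibly negative) index $n_0$: one must be careful that the limit $y\to\infty$ genuinely isolates the desired term, which requires controlling the tail $\sum_{n > n_0} c_n^{(k)} e^{-2\pi(n-n_0)y/M}$ uniformly — this is where the subexponential bound \eqref{coeffgrowth} (with $t>1$) is essential, since it ensures the tail is $o(1)$ as $y\to\infty$ even after multiplying by any fixed power $y^k$. An alternative, perhaps slicker, phrasing is to note that $G(w) := \sum_{k\in S}(\log w)^{?}$ — but the substitution $w = e^{-2\pi y/M}$ turns each $g_k$ into a power series in $w$ convergent near $w=0$ (after absorbing $w^{n_0}$), and $\sum_k y^k g_k$ with $y = -\tfrac{M}{2\pi}\log w$ vanishing on $(0,1)$ forces, by analytic continuation and comparison of asymptotics as $w\to 0^+$, all coefficients to vanish; I would present whichever version reads more cleanly. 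Either way the analytic input is elementary, and the real-analyticity of $F$ is used only to justify that the termwise $x$-derivative equals $\tfrac{\partial}{\partial x}F$.
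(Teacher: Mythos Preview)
Your proposal is correct and follows essentially the same route as the paper: both decompose $F(iy)$ and $\partial_x F(iy)$ into sums $\sum_n P_n(y)e^{-2\pi ny/M}$ with $P_n$ a Laurent polynomial in $y$, then peel off the leading exponential inductively (the paper phrases this as ``gain exponentially growing terms'', you phrase it as ``multiply by $e^{2\pi n_0 y/M}$ and let $y\to\infty$''), and finally solve the resulting pair $a_n^{(k)}+b_n^{(k)}=0$, $n(a_n^{(k)}-b_n^{(k)})=0$. Your treatment of the $n=0$ case is in fact slightly more careful than the paper's, which asserts the ``unique solution $a_n^{(k)}=b_n^{(k)}=0$'' without noting that for $n=0$ one only obtains $a_0^{(k)}=-b_0^{(k)}$ (which still suffices for $F\equiv 0$).
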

\begin{proof}
We have that
\begin{equation}
F(iy)=\sum_{n\geq n_0}P_n(y)e^{-2\pi n y/M}
\end{equation}
for some `polynomials' with negative powers
\begin{equation}\label{Pn}
P_n(y)=\sum_{k\in S}(a_n^{(k)}+b_n^{(k)})y^k.
\end{equation}
Similarly, we have
\begin{equation}
\frac{\partial}{\partial x}F(iy)=\sum_{n\geq n_0}Q_n(y)e^{-2\pi n y/M},
\end{equation}
where
\begin{equation}\label{Qn}
Q_n(y)=\frac{2\pi i n}{M}\sum_{k\in S}(a_n^{(k)}-b_n^{(k)})y^k.
\end{equation}
Now we consider
\begin{equation}\label{Fiypolyform}
F(iy)=\sum_{n<0}P_n(y)e^{-2\pi n y/M}+\sum_{n\geq 0}P_n(y)e^{-2\pi n y/M}.
\end{equation}
If nonzero, the right left-hand term grows exponentially in $y$ and the right-hand term is $O(y^{k'})$ for $k'$ the largest element of $S$, by the condition on the $a_n^{(k)},b_n^{(k)}$. This is a contradiction, and therefore we at least have the left-hand term being uniformly zero. That is,
\begin{equation}\label{exptermgone}
F(iy)=\sum_{n\geq 0}P_n(y)e^{-2\pi n y/M}=0.
\end{equation}
But then we can successively multiply equation (\ref{exptermgone}) by $e^{2\pi n y/M}$ to gain exponentially growing terms like those on the left side of (\ref{Fiypolyform}), which by the same reasoning must vanish. Inductively, then, $P_n(y)=0$ for all $n\geq n_0$. In a similar way, we can get $Q_n(y)=0$ for all $n\geq n_0$. Comparing with (\ref{Pn}) and (\ref{Qn}) gives us a pair of simultaneous equations with the unique solution $a_n^{(k)}=b_n^{(k)}=0$ for all $n\geq n_0$. That is to say, $F\equiv 0.$
\end{proof}

Note: If $F$ is of the form (\ref{fexpform}) with $M=1$, then, for $\chi$ a character modulo $D$, $F_\chi$ is also of the form (\ref{fexpform}) with $M=D$.

\begin{remark}
By Lemma \ref{eigenexp}, functions in $\mathcal{M}^!(\Gamma_0(N), \psi)$ of the form given by \eqref{fexpform} include $\mathcal{HM}^!(\Gamma_0(N),\psi)$, and linear combinations including $\mathcal{MI}_1(\Gamma_0(N),\psi)$. Hence our converse theorem can account for quite a broad class of functions.
\end{remark}

For $F$ of the form \eqref{fexpform}, we have the following explicit expansions for $(\partial_r F)(z)$ and $(\overline{\partial}_s F)(z)$ (and note that these are also of that form):
\begin{align*}
    (\partial_r F)(z) &= rF(z) + \sum_{k\in S} M^{-k} \sum_{n\geq n_0} y^k \left[k-\frac{4\pi n y}{M} \right] a_n^{(k)} q^{n/M} + \sum_{k\in S} M^{-k} \sum_{n\geq n_0}  ky^k  b_n^{(k)} \bar q^{n/M} ,
    \\
    (\overline{\partial}_s F)(z) &= sF(z) + \sum_{k\in S} M^{-k} \sum_{n\geq n_0} ky^k a_n^{(k)} q^{n/M} + \sum_{k\in S} M^{-k} \sum_{n\geq n_0}  y^k \left[k-\dfrac{4\pi n y}{M} \right]  b_n^{(k)}  \bar q ^{n/M}.
\end{align*}

By Lemma \ref{integralformlemma}, we have
\begin{align}
     L_{\partial_r (F_{\chi})}(\varphi) = \int^{\infty}_0 \partial_r (F_{\chi})(iy)\varphi(y) dy,
     \\[2mm]
     L_{\bar\partial_s (F_{\chi})}(\varphi) = \int^{\infty}_0 \overline{\partial}_s (F_{\chi})(iy)\varphi(y) dy,
\end{align}
for $\varphi\in\mathcal{F}_{\partial_r (F_{\chi})}\cap\mathcal{F}_{\bar\partial_s (F_{\chi})}$. We will use these $L$-series in the converse theorem below.

\begin{theorem}\label{converseth}
Let $N,r,s$ be integers with $N \geq 1$ and $\psi$ be a Dirichlet character modulo $N$. Suppose $a^{(k)}_n,b^{(k)}_n,A^{(k)}_n, B^{(k)}_n$ are complex numbers with a growth condition like
\begin{equation}\label{coeffgrowth2}
a_n^{(k)},b_n^{(k)},A_n^{(k)},B_n^{(k)}\ll e^{Cn^{1/t}},
\end{equation}
for some $C$ positive, $t>1$. We consider the expansions

 \begin{equation}\label{Conversecusp}
    F_1 = \sum_{k\in S} \sum_{n\geq n_0} y^k a_n^{(k)} q^m + \sum_{k\in S} \sum_{n\geq n_0} y^k b_n^{(k)} \bar{q}^n
 \end{equation}
 and
 \begin{equation}
     F_2 = \sum_{k\in S} \sum_{n\geq n_0} y^k A_n^{(k)} q^m + \sum_{k\in S} \sum_{n\geq n_0} y^k B_n^{(k)} \bar{q}^n.
 \end{equation}
  
Finally, assume that for any $\varphi\in S_c(\mathbb{R_+})$, $D\in \{1,2,...,N^2-1 \}$ with \emph{gcd}$(D,N)=1$ and Dirichlet character $\chi$ mod $D$, the following equations hold:
\begin{align}
     L_{F_{1_\chi}}(\varphi) =  \frac{\chi(-N)\psi(D)}{i^{s-r} N^{\frac{r+s-2}{2}}} L_{F_{2_{\bar\chi}}}(\varphi|_{2-r-s} W_N), \label{Converse1a} 
     \\[2mm]
     L_{\partial_r F_{1_\chi}}(\varphi) =  -\frac{\chi(-N)\psi(D)}{i^{s-r} N^{\frac{r+s-2}{2}}} L_{\partial_r F_{2_{\bar\chi}}}(\varphi|_{2-r-s} W_N), \label{Converse1b} 
     \\[2mm]
     L_{\bar\partial_s F_{1_\chi}}(\varphi) =  -\frac{\chi(-N)\psi(D)}{i^{s-r} N^{\frac{r+s-2}{2}}} L_{\bar\partial_s F_{2_{\bar\chi}}}(\varphi|_{2-r-s} W_N). \label{Converse1c}
\end{align}
Then, $(F_{1_{\chi}}||_{r,s}W_N) (z) = \chi(-N)\psi(D)   F_{2_{\bar\chi}}(z)$ and $F_1 \in \mathcal{M}^!_{r,s}(\Gamma_0(N), \psi)$.
\end{theorem}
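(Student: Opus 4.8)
The plan is to follow Weil's converse theorem, with Lemma~\ref{BumLem} playing the role that analytic continuation plays in the holomorphic case. Write $G \coloneqq F_1||_{r,s}W_N$, and, for admissible data ($\gcd(D,N)=\gcd(\mu,D)=1$, $N\mu\nu\equiv -1 \bmod D$), set $M_\nu \coloneqq \begin{psmallmatrix} D^{-1/2} & \nu D^{-1/2} \\ 0 & D^{1/2}\end{psmallmatrix}$ and $\gamma_{D,\mu,\nu} \coloneqq \begin{psmallmatrix} D & -\nu \\ -N\mu & (1+N\mu\nu)D^{-1}\end{psmallmatrix}\in\Gamma_0(N)$. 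By the Note after Lemma~\ref{BumLem} each twist $F_{1_\chi}$, $F_{2_{\bar\chi}}$ has an expansion of type \eqref{fexpform} with $M=D$, and the bound \eqref{coeffgrowth2} with the estimate from the proof of Theorem~\ref{functionaleq} puts $S_c(\mathbb{R}_+)$ inside all relevant test spaces, so every $L$-series below converges. First I would convert the three hypotheses into identities on the imaginary axis: writing each $L$-series in \eqref{Converse1a} as an integral over $iy$ via Lemma~\ref{integralformlemma}, then using the substitution $y\mapsto 1/(Ny)$ and $(\Phi||_{a,b}W_N)(iy)=i^{b-a}N^{-(a+b)/2}\Phi(-1/(Niy))y^{-a-b}$ exactly as in the proof of Theorem~\ref{functionaleq}, \eqref{Converse1a} becomes $\int_0^\infty\big[(F_{1_\chi}||_{r,s}W_N)(iy)-\chi(-N)\psi(D)F_{2_{\bar\chi}}(iy)\big](\varphi|_{2-r-s}W_N)(y)\,dy=0$ for all $\varphi\in S_c(\mathbb{R}_+)$. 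For \eqref{Converse1b}, \eqref{Converse1c} one first rewrites, by Corollary~\ref{opcor}, $\partial_r(F_{1_\chi}||_{r,s}W_N)=(\partial_r F_{1_\chi})||_{r+1,s-1}W_N$; this has weight $(r+1,s-1)$ with $2-(r+1)-(s-1)=2-r-s$ but $i^{(r+1)-(s-1)}=-i^{r-s}$, so the minus signs in \eqref{Converse1b}, \eqref{Converse1c} are exactly what makes the same manipulation produce the vanishing of the analogous integrals. Since $\varphi\mapsto\varphi|_{2-r-s}W_N$ maps $S_c(\mathbb{R}_+)$ onto itself, $S_c(\mathbb{R}_+)$ separates points of $\mathbb{R}_+$, and each bracket is continuous in $y$, every bracket vanishes identically. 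Writing $H_\chi\coloneqq F_{1_\chi}||_{r,s}W_N-\chi(-N)\psi(D)F_{2_{\bar\chi}}$ (weight $(r,s)$) and using Corollary~\ref{opcor} again for $\partial_r H_\chi$, $\bar\partial_s H_\chi$, this says $H_\chi(iy)=\partial_r H_\chi(iy)=\bar\partial_s H_\chi(iy)=0$ for all $y>0$.

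From \eqref{partial} one has $\tfrac{\partial}{\partial x}=\tfrac{1}{2i\,\mathrm{Im}(z)}\big(\partial_r-\bar\partial_s-(r-s)\big)$, hence also $\tfrac{\partial}{\partial x}H_\chi(iy)=0$; then Lemma~\ref{BumLem} applied to $H_\chi$ gives $H_\chi\equiv 0$, i.e. $(F_{1_\chi}||_{r,s}W_N)(z)=\chi(-N)\psi(D)F_{2_{\bar\chi}}(z)$, the first assertion, and taking $D=1$ with $\chi$ trivial yields $G=F_2$. For modularity, $F_1||_{r,s}T=F_1=\psi(1)F_1$ is immediate from the integral $q,\bar q$-expansion \eqref{Conversecusp}. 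For the rest, the matrix identity $\begin{psmallmatrix}D^{-1/2}&\mu D^{-1/2}\\0&D^{1/2}\end{psmallmatrix}W_N=W_N\gamma_{D,\mu,\nu}M_\nu$ from the proof of Theorem~\ref{functionaleq} and the definition of the twist give $F_{1_\chi}||_{r,s}W_N=D^{(r+s)/2}\sum_{\mu}\overline{\chi(\mu)}\,(G||_{r,s}\gamma_{D,\mu,\nu})||_{r,s}M_{\nu}$ (with $\nu=\nu(\mu)$), while by the first assertion and $G=F_2$ the same quantity equals $\chi(-N)\psi(D)D^{(r+s)/2}\sum_\nu\chi(\nu)(G||_{r,s}M_\nu)$. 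Using $\overline{\chi(\mu)}=\chi(-N)\chi(\nu)$ and the bijection $\mu\leftrightarrow\nu$ on $(\mathbb{Z}/D\mathbb{Z})^\times$, this forces $\sum_\nu\chi(\nu)\big[(G||_{r,s}\gamma_{D,\mu(\nu),\nu}-\psi(D)G)||_{r,s}M_\nu\big]=0$ for every character $\chi\bmod D$; orthogonality of characters and invertibility of $||_{r,s}M_\nu$ give $G||_{r,s}\gamma_{D,\mu,\nu}=\psi(D)G$ for every admissible triple.

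Conjugating by $W_N$ (note $W_N\gamma_{D,\mu,\nu}W_N^{-1}=\begin{psmallmatrix}(1+N\mu\nu)D^{-1}&\mu\\ N\nu&D\end{psmallmatrix}\in\Gamma_0(N)$, with lower-right entry $D$) this reads $F_1||_{r,s}\delta=\psi(d_\delta)F_1$ for $\delta=W_N\gamma_{D,\mu,\nu}W_N^{-1}$. By the Weil group-generation lemma (c.f.\ \cite{topics}, ch.~7 — this is where the range $1\le D<N^2$ enters) these $\delta$ together with $\pm T$ generate $\Gamma_0(N)$; since $\gamma\mapsto(F\mapsto F||_{r,s}\gamma)$ is a group action and $\gamma\mapsto\psi(d_\gamma)$ a homomorphism on $\Gamma_0(N)$, the transformation law \eqref{transform} extends to all of $\Gamma_0(N)$. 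Condition~(ii) of Definition~\ref{ramfdef} is the hypothesis \eqref{Conversecusp}, and condition~(iii) follows from \eqref{coeffgrowth2} together with the identity $F_1=(-1)^{r+s}G||_{r,s}W_N$ and $G=F_2$, which controls the expansions of $F_1$ at the cusps; hence $F_1\in\mathcal{M}^!_{r,s}(\Gamma_0(N),\psi)$.

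The main obstacle is the application of Lemma~\ref{BumLem} in the second step: it requires $H_\chi$ — equivalently $F_{1_\chi}||_{r,s}W_N$ — to admit an expansion of the separated form \eqref{fexpform}, which is not automatic since $W_N$ is an inversion. For the forms the theorem is aimed at, elements of $\mathcal{HM}^!(\Gamma_0(N),\psi)$ and of $\mathcal{MI}^!_1(\Gamma_0(N),\psi)$, this does hold: by Lemma~\ref{efinv} the action of $||_{r,s}W_N$ preserves eigenspaces of $\Delta_{r,s}$, and by Lemma~\ref{eigenexp} eigenfunctions split into pieces of the form \eqref{fexpform}. A secondary point requiring care is the precise statement of the group-generation lemma and the bookkeeping of the characters $\psi$ and $\chi$ throughout.
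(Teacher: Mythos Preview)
Your approach coincides with the paper's: functional equations $\to$ pointwise identities on $i\mathbb{R}_+$ $\to$ Lemma~\ref{BumLem} $\to$ identity on $\mathbb{H}$ $\to$ Weil's generation argument. There are two differences worth noting. First, you derive the imaginary-axis identities by direct substitution $y\mapsto 1/(Ny)$ and separation of points in $S_c(\mathbb{R}_+)$; the paper instead passes through Mellin inversion, writing $F_{j_\chi}(iy)\varphi(y)=\frac{1}{2\pi i}\int_{(\sigma)}L_{F_{j_\chi}}(\varphi_{r+s-t})\,y^{-r-s+t}\,dt$, applying \eqref{Converse1a} inside the integral, and inverting again. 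Your route is cleaner and avoids the holomorphy check on $s\mapsto L_{F_{j_\chi}}(\varphi_s)$ that the paper carries out. Second, the paper works with $I_\chi \coloneqq F_{1_\chi}-\chi(-N)\psi(D)(F_{2_{\bar\chi}}||_{r,s}W_N^{-1})$ rather than your $H_\chi=I_\chi||_{r,s}W_N$, and for the generation step simply cites the proof of Theorem~5.1 of \cite{diamantisLseries}, whereas you spell out the matrix manipulation and orthogonality argument.

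Your flagged obstacle is genuine and the paper does not address it either: Lemma~\ref{BumLem} requires a separated expansion of type \eqref{fexpform}, and the paper's $I_\chi$ contains the term $F_{2_{\bar\chi}}||_{r,s}W_N^{-1}$, which has no better a priori claim to such an expansion than your $F_{1_\chi}||_{r,s}W_N$. For condition~(iii) the paper makes your sketch precise by recording a one-line Lemma: a function with expansion \eqref{fexpform} and coefficient bound \eqref{coeffgrowth2} is $O(e^{Ay})$ as $y\to\infty$ and $O(e^{B/y})$ as $y\to 0$, uniformly in $x$; the required growth at every cusp then follows from periodicity of $F_1||_{r,s}\gamma$ and the fact that $\mathrm{Im}(\gamma z)\to 0$ as $y\to\infty$ when $c\ne 0$.
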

\begin{proof}
Let $\varphi\in S_c(\mathbb{R_+})$ with Supp$(\varphi)\subset[c_1,c_2]$. First we show that $\varphi_s \in \mathcal{F}_{F_{1_\chi}}\cap \mathcal{F}_{F_{2_\chi}}$ and that $L_{F_{j_\chi}}(\varphi_s)$ is holomorphic as a function of $s\in\mathbb{C}$ (this will be to justify Mellin inversion). We have
\begin{equation}
\begin{split}
\mathcal{L}|\varphi_{s_{k+1}}|\left(\frac{2\pi m}{M}\right)
&=\int_0^\infty|y^{s+k-1}||\varphi(y)|e^{-2\pi y m/M}dy\\
&=\int_{c_1}^{c_2}|y^{s+k-1}||\varphi(y)|e^{-2\pi y m/M}dy\\
&<<_\varphi \text{max}\left\{c_1^{\text{Re}(s)+k-1},c_2^{\text{Re}(s)+k-1}\right\} e^{-2\pi c_1 m/M}.
\end{split}
\end{equation}
Now an application of the Weierstrass $M$-test shows that $L_{F_{j_\chi}}(\varphi_s)$ converges absolutely (so $\varphi_s \in \mathcal{F}_{F_{1_\chi}}\cap \mathcal{F}_{F_{2_\chi}}$) and uniformly on compact subsets, and therefore the uniform limit theorem tells us this limit is holomorphic. With this, we can apply the following Mellin inversions:
\begin{align}
 F_{j_{\chi}} (iy) \varphi (y) &= \dfrac{1}{2\pi i} \int_{(\sigma)} L_{F_{j_{\chi}}}(\varphi_{r+s-t}) y^{-r-s+t} dt, \label{ConverseFj}
 \\
\partial_r(F_{j_{\chi}}) (iy) \varphi (y) &= \dfrac{1}{2\pi i} \int_{(\sigma)} L_{\partial_r F_{j_{\chi}}}(\varphi_{r+s-t}) y^{-r-s+t} dt, \label{ConversedrFj}
 \\
\overline{\partial}_s(F_{j_{\chi}}) (iy) \varphi (y) &= \dfrac{1}{2\pi i} \int_{(\sigma)} L_{\bar\partial_s F_{j_{\chi}}}(\varphi_{r+s-t}) y^{-r-s+t} dt.\label{ConversedsFj}
\end{align}
Combining equations \eqref{ConverseFj} and \eqref{Converse1a} then gives
\begin{equation}\label{ConverseF1F2}
    F_{1_{\chi}} (iy) \varphi (y) = \dfrac{1}{2\pi i}  \frac{\chi(-N)\psi(D)}{i^{s-r} N^{\frac{r+s-2}{2}}} 
 \int_{(\sigma)} L_{F_{2_{\bar\chi}}}(\varphi_{r+s-t}|_{2-r-s} W_N) y^{-r-s+t} dt.
\end{equation}

The next step is to rewrite the above equation into one that is of more use to us. We first note that $L_{F_{2_{\bar\chi}}}(\varphi_{r+s-t}|_{2-r-s} W_N) = \int^\infty_0 F_{2_{\bar\chi}}(iy)(\varphi_{r+s-t}|_{2-r-s} W_N)(y)dy$ and 
\begin{align*}
    (\varphi_{r+s-t}|_{2-r-s} W_N)(y) & = (Ny)^{r+s-2}\varphi_{r+s-t}(1/Ny) \\[2mm]
    & = (Ny)^{r+s-2}\varphi(1/Ny)(Ny)^{1+t-r-s}
    = (Ny)^{t-1}\varphi(1/Ny).
\end{align*}
Therefore, we have 
\begin{equation*}
    L_{F_{2_{\bar\chi}}}(\varphi_{r+s-t}|_{2-r-s} W_N) = \int^\infty_0 F_{2_{\bar\chi}}(iy) (Ny)^{t-1}\varphi(1/Ny) dy
\end{equation*}
and, by changing the variable $y \to 1/Ny$, this becomes
\begin{equation*}
    L_{F_{2_{\bar\chi}}}(\varphi_{r+s-t}|_{2-r-s} W_N) = \dfrac{1}{N} \int^\infty_0 F_{2_{\bar\chi}}(i/Ny) y^{-t-1}\varphi(y) dy.
\end{equation*}
By Mellin inversion, we then have
\begin{equation*}
    \dfrac{1}{2\pi i} \int_{(\sigma)} L_{F_{2_{\bar\chi}}}(\varphi_{r+s-t}|_{2-r-s} W_N) \, y^t dt = \dfrac{1}{N} F_{2_{\bar\chi}}(i/Ny) \varphi(y).
\end{equation*}
and conclude that equation \eqref{ConverseF1F2} can be written as
\begin{equation}\label{transformlawconv}
    F_{1_{\chi}}(iy) \varphi(y) = \frac{\chi(-N)\psi(D)}{i^{s-r} N^{\frac{r+s}{2}}}  y^{-r-s} F_{2_{\bar\chi}}(i/Ny) \varphi(y).
\end{equation}

Since (\ref{transformlawconv}) holds for all $\varphi \in S_c(\mathbb{R}_+)$, for any $y>0$ we can always choose a $\varphi$ such that $\varphi(y)\neq 0$, and therefore (\ref{transformlawconv}) implies
\begin{equation}\label{ConverseF1=F2}
    F_{1_{\chi}}(iy)  = \frac{\chi(-N)\psi(D)}{i^{s-r} N^{\frac{r+s}{2}}}  y^{-r-s} F_{2_{\bar\chi}}(i/Ny) .
\end{equation}
Following the same procedure as above we also obtain the following equations for the operators $\partial$ and $\overline\partial$:
\begin{align*}
    \partial_r (F_{1_{\chi}})(iy)  & = -\frac{\chi(-N)\psi(D)}{i^{s-r} N^{\frac{r+s}{2}}}  y^{-r-s} \partial_r(F_{2_{\bar\chi}})(i/Ny), 
    \\
    \overline{\partial}_s (F_{1_{\chi}})(iy) \,  & = -\frac{\chi(-N)\psi(D)}{i^{s-r} N^{\frac{r+s}{2}}}  y^{-r-s} \overline{\partial}_s (F_{2_{\bar\chi}})(i/Ny) .
\end{align*}
We then define
\begin{equation*}
    I_\chi(z) \coloneqq F_{1_\chi}(z) - \chi(-N)\psi(D)(F_{2_{\bar\chi}}||_{r,s} W^{-1}_N)(z)
\end{equation*}
and claim that $I_\chi(iy) = 0$. To see that this is true we simply note that
\begin{align*}
    (F_{2_{\bar\chi}}||_{r,s} W^{-1}_N)(iy) & = (- \sqrt{N} i y)^{-r}( \sqrt{N} i y)^{-s}F_{2_{\bar\chi}}(-1/iyN) \\
    & = N^{\frac{-r-s}{2}}y^{-r-s} (-i)^{-r}i^{-s} F_{2_{\bar\chi}}(i/yN) = \dfrac{y^{-r-s}}{i^{s-r}N^{\frac{r+s}{2}}} F_{2_{\bar\chi}}(i/yN)
\end{align*}
and use equation \eqref{ConverseF1=F2}.

Similarly, we get $\partial_r I_{\chi}(iy)=0$ and $\overline{\partial}_s I_\chi(iy)=0$. This, in turn, implies that
\begin{equation*}
    \frac{\partial}{\partial z} I_\chi (iy) = \frac{\partial}{\partial\bar z} I_\chi (iy) = 0
\end{equation*}
and 
\begin{equation*}
    \frac{\partial}{\partial x} I_\chi (iy) 
    = \left( \frac{\partial}{\partial z} +  \frac{\partial}{\partial \bar z}\right)  I_\chi (iy) = 0.
\end{equation*}
To summarise, we have $I_\chi (iy) = \frac{\partial}{\partial x} I_\chi (iy) = 0$, which allows us to use Lemma \ref{BumLem} to conclude that $I_\chi \equiv 0$ and
\begin{equation*}
     F_{1_\chi}(z) = \chi(-N)\psi(D)(F_{2_{\bar\chi}}||_{r,s} W^{-1}_N)(z).
\end{equation*}
This is essentially equation (5.28) from Theorem 5.1 of \cite{diamantisLseries}, but with $|_k$ replaced by $||_{r,s}$ (in our notation $|_k = ||_{k,0} $). The same reasoning used below equation (5.28) in the proof of Theorem 5.1 of \cite{diamantisLseries} shows that $F_{1}$ obeys the transformation law of equation \eqref{transform}. We see that $F_{1}$ obeys equation \eqref{fourier} directly from \eqref{Conversecusp}.

To show that $F_1$ satisfies condition (iii) of the definition of $\mathcal M_{r, s}^!(\Gamma(N), \chi)$, we first note the following analogue of Lemma 4.33 of \cite{miyakebook}:
\begin{lemma}\label{miyaketype} Let $f:\mathbb{H}\rightarrow\mathbb{C}$ satisfy the assumptions \emph{(\ref{fexpform})}.  Then, for some $A, B>0,$
\begin{equation}\label{bounds} f(x+iy)=O(e^{Ay}) \, \, \text{as $y \to \infty$, and} \, \, =O(e^{B/y}) \, \, \text{as $y \to 0$}
\end{equation}
uniformly in $x.$
\end{lemma}
\begin{proof}
    This follows from the form of the assumed expansion of $f$.
\end{proof}
Then for $\gamma=\begin{psmallmatrix}
* &  * \\ c &  d  \end{psmallmatrix} \in SL_2(\mathbb Z)$ with $c=0$, condition $(iii)$ of Definition \ref{ramfdef} follows directly from the first part of \eqref{bounds}.

If $c \neq 0,$ we first note that by, say, Cor. 5.1.14 of \cite{cohen1}, $f||_{r, s}\gamma$ is periodic with some period $m_0$, so we can restrict to $z=x+iy$ with $x \in [0, m_0].$ Then, as $y \to \infty$, we have that  $\text{Im}(\gamma z)=\text{Im}(z)/((cx+d)^2+(cy)^2) \to 0$ and thus $$(f||_{r,s} \gamma)(z)=(cz+d)^{-r}(c\bar{z}+d)^{-s}f(\gamma z)=O(e^{B((cx+d)^2+(cy)^2)/y})=O(e^{B' y})$$ for some $B'>0$ as required for $(iii)$ of Definition \ref{ramfdef} to hold. We therefore conclude that $F_1 \in \mathcal{M}^!_{r,s}(\Gamma_0(N), \psi)$.
\end{proof}
\subsection{Functional equation and converse theorem for general \emph{L}-series}
As mentioned in the introduction, we can define an $L$-series for all real-analytic modular forms by introducing an auxiliary variable.
\begin{definition}
Let $F$ have an expansion of the form
\begin{equation}\label{expformu}
F(z)=\sum_{|j| \le N_0}y^j\left ( \sum_{m, n \ge -N_0'} a_{m, n}^{(j)} \, q^{m/M} \overline{q}^{n/M}\right )\! , 
\end{equation}
for some $N_0,N_0'\in\mathbb{N}$, $a_{m,n}^{(j)}\in\mathbb{C}$. Then we let $\widetilde{\mathcal{F}}_F\subset\mathcal{F}_F$ denote the space of piecewise continuous functions $\mathbb{R}\rightarrow\mathbb{C}$ such that the following series converges for all $u\in\mathbb{R}$, $|j|\le N_0$:

\begin{equation}\label{Ffconvu}
\sum_{\substack{m,n\in\mathbb{Z}\\m,n\geq -N_0'}}|a_{m,n}^{(j)}|\mathcal{L}|\varphi_{j+1}|\left(\frac{2\pi(n+m)}{M\sqrt{u^2+1}}\right)<\infty.
 \end{equation}
 And the $L$-series associated to $F$ is defined to be the map  $L_F: \mathcal{F}_F \times\mathbb{R} \to \mathbb{C}$ given by
    \begin{align*}
        &L_F(\varphi;u)=\sum_{|j|\leq N_0}\frac{1}{\sqrt{u^2+1}^{j+1}}\sum_{\substack{m,n\in\mathbb{Z}\\m,n\geq -N_0'}}a_{m,n}^{(j)} \,\mathcal{L}\varphi_{j+1}\left(\frac{2\pi(n+m+iu(n-m))}{M\sqrt{u^2+1}}\right) \! .
    \end{align*}
\end{definition}
We note that if $F$ has an expansion of the form given by (\ref{F/M}) then $L_F(\varphi;0)=L_F(\varphi)$. The analogue of Lemma \ref{integralformlemma} is as follows:
\begin{lemma}
Let $F$ have an expansion of the form (\ref{expformu}) and $\varphi\in\widetilde{\mathcal{F}}_F$. Then
\begin{equation}
L_F(\varphi;u)=\int_0^\infty F((i+u)y)\varphi(y\sqrt{u^2+1}) dy.
\end{equation}
\end{lemma}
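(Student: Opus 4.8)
The plan is to mimic the proof of Lemma \ref{integralformlemma}, simply keeping track of the auxiliary variable $u$. First I would write out the expansion (\ref{expformu}) of $F$ inside the integral $\int_0^\infty F((i+u)y)\varphi(y\sqrt{u^2+1})\,dy$. Setting $w \coloneqq (i+u)y$, so that $y = \operatorname{Im}(w)$ and $q = e^{2\pi i w}$, the term $q^{m/M}\overline{q}^{n/M}$ evaluates at $w = (i+u)y$ to
\begin{equation*}
e^{2\pi i (i+u)y m/M}\, e^{-2\pi i(-i+u)y n/M} = e^{-\frac{2\pi y}{M}\left[(m+n) + i u(m - n)\right]},
\end{equation*}
and $y^j$ contributes a factor $y^j$. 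Thus each summand becomes $a_{m,n}^{(j)}\,y^j\,e^{-\frac{2\pi y}{M}[(m+n)+iu(m-n)]}$.

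Next I would perform the substitution $t = y\sqrt{u^2+1}$, i.e. $y = t/\sqrt{u^2+1}$, $dy = dt/\sqrt{u^2+1}$, which turns the integral of the $(j,m,n)$-summand into
\begin{equation*}
\frac{a_{m,n}^{(j)}}{\sqrt{u^2+1}^{\,j+1}} \int_0^\infty t^{j}\, \varphi(t)\, \exp\!\left(-\frac{2\pi\bigl[(m+n)+iu(m-n)\bigr]}{M\sqrt{u^2+1}}\, t\right) dt,
\end{equation*}
which is precisely $\dfrac{a_{m,n}^{(j)}}{\sqrt{u^2+1}^{\,j+1}}\,\mathcal{L}\varphi_{j+1}\!\left(\dfrac{2\pi\bigl[(n+m)+iu(n-m)\bigr]}{M\sqrt{u^2+1}}\right)$ by the definition $\varphi_{j+1}(t) = t^j\varphi(t)$ and the definition of the Laplace transform (noting $m+n = n+m$ and $i u(m-n) = -iu(n-m)$, matching the sign convention in the statement of $L_F(\varphi;u)$). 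Summing over $|j|\le N_0$ and $m,n\ge -N_0'$ then reproduces the series defining $L_F(\varphi;u)$.

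The only genuine point requiring care — and the main obstacle — is justifying the interchange of the (double) sum over $m,n$ and the integral over $y$. This is exactly where the definition of $\widetilde{\mathcal{F}}_F$ enters: the convergence condition (\ref{Ffconvu}) states that for every $u\in\mathbb{R}$ and every $|j|\le N_0$,
\begin{equation*}
\sum_{m,n\geq -N_0'}|a_{m,n}^{(j)}|\,\mathcal{L}|\varphi_{j+1}|\!\left(\frac{2\pi(n+m)}{M\sqrt{u^2+1}}\right)<\infty,
\end{equation*}
and since $\bigl|e^{-\frac{2\pi}{M\sqrt{u^2+1}}[(n+m)+iu(n-m)]\,t}\bigr| = e^{-\frac{2\pi(n+m)}{M\sqrt{u^2+1}}\,t}$, the absolute value of the $(j,m,n)$-integrand is bounded by $|a_{m,n}^{(j)}|\,t^{j}|\varphi(t)|\,e^{-\frac{2\pi(n+m)}{M\sqrt{u^2+1}}\,t}$ (after the substitution), whose integral over $t$ is exactly the $(j,m,n)$-term of (\ref{Ffconvu}). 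Hence Fubini--Tonelli applies, the rearrangement is legitimate, and the identity follows. Since $N_0$ is finite, there is no issue with the outer sum over $j$.
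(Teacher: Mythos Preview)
Your approach is exactly what the paper intends: the lemma is stated there without proof as the direct analogue of Lemma~\ref{integralformlemma}, whose one-line proof is ``write out the expansion of $F$ inside the integral and use the absolute convergence condition to exchange sums and integrals''---precisely what you do, with the substitution $t=y\sqrt{u^2+1}$ making the match with the defining series explicit. One small slip: the exponent of $q^{m/M}\bar q^{n/M}$ at $(i+u)y$ is $-\frac{2\pi y}{M}\bigl[(m+n)-iu(m-n)\bigr]$ (not $+iu(m-n)$), which after rewriting $-iu(m-n)=+iu(n-m)$ matches the paper's argument $n+m+iu(n-m)$; your parenthetical remark gets this backwards, but the final formula you land on is the correct one.
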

We can use this in much the same way to deduce the following functional equation:
\begin{theorem}\label{functionalequ}
Let $r,s \in \mathbb{Z}$, $N,D \in \mathbb{N}$ with $(D,N)=1$, and let $\psi$ and $\chi$ be Dirichlet characters modulo $N$ and $D$, respectively. Furthermore, let $F\in\mathcal{M}^!_{r,s}(\Gamma_0(N), \psi)$ and have an expansion of the form
\begin{equation}
 F(z) = \sum_{|j| \le N_0}y^j\left ( \sum_{m, n \ge -N_0'} a_{m, n}^{j} \, e^{2\pi i m z}e^{-2\pi i n\overline{z}}\right ) \!,
\end{equation}
with coefficients not growing too rapidly e.g.
\begin{equation}\label{coeffgrowthu}
\sum_{m,n>0}|a_{m,n}^{(j)}|e^{-(n+m)c}
\end{equation}
converges for all $c>0$, $|j|\leq N_0$.
Suppose a similar expansion and coefficient bounds for
\begin{align*}
G \coloneqq F||_{r,s} W_N.
\end{align*}
Finally, we define the test function space
\begin{align*}
\widetilde{\mathcal{F}}_{F,G} = \bigcap_{\chi\mod D}\left\{\varphi\in\widetilde{\mathcal{F}}_{F_\chi}:\varphi|_{2-r-s} W_N\in\widetilde{\mathcal{F}}_{G_{\bar\chi}}\right\}.
\end{align*}
Then $\widetilde{\mathcal{F}}_{F,G}$ is non-empty and, for $\varphi \in \widetilde{\mathcal{F}}_{F,G}$ and $u\in\mathbb{R}$, we have the following functional equation:
\begin{align}
     L_{F_\chi}(\varphi ; u) =  \frac{(i-u)^r(-i-u)^s\chi(-N)\psi(D)}{(u^2+1)^{(r+s)/2} N^{\frac{r+s-2}{2}}} L_{G_{\bar\chi}}(\varphi|_{2-r-s} W_N; -u) \label{LFX1bu} .
\end{align}

\end{theorem}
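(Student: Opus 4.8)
The plan is to mimic the proof of Theorem \ref{functionaleq} with the auxiliary variable $u$ threaded through every step, using the integral representation from the preceding lemma in place of Lemma \ref{integralformlemma}. First I would start from $L_{F_\chi}(\varphi;u)=\int_0^\infty F_\chi((i+u)y)\varphi(y\sqrt{u^2+1})\,dy$. The key observation is that $(i+u)/N$ under the matrix $W_N$ behaves like $-1/(N(i+u))$, and the reflection $y\mapsto 1/((u^2+1)Ny)$ should interchange the two ends of the integration range. More precisely, I would substitute $y\mapsto 1/((u^2+1)Ny)$ (so that the argument $(i+u)y$ of $F_\chi$ becomes, after rearrangement, $-1/(N(i+u)y)$ which is $W_N$ applied to $(i+u)y$ up to the scalar $\det(W_N)$ convention). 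Tracking the automorphy factors $j(W_N,(i+u)y)^{-r}j(W_N,\overline{(i+u)y})^{-s}$ produces exactly the prefactor $(i-u)^r(-i-u)^s/(u^2+1)^{(r+s)/2}$: indeed $c z + d$ with $W_N=\begin{psmallmatrix}0 & -N^{-1/2}\\ N^{1/2} & 0\end{psmallmatrix}$ evaluated at $z=(i+u)y$ gives $N^{1/2}(i+u)y$, whose $(-r)$-th power and whose conjugate's $(-s)$-th power, combined with $\det(W_N)^{(r+s)/2}=1$, assemble into the stated coefficient after the $y$-substitution.

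Next I would invoke the twisted-modularity identity. The computation in the proof of Theorem \ref{functionaleq} that yields equation \eqref{FX=GX}, namely $F_\chi\|_{r,s}W_N=\chi(-N)\psi(D)G_{\bar\chi}$, is purely a statement about the double slash action and Lemma \ref{lemG|F}; it does not involve the $L$-series or the test function at all, so it carries over verbatim. Applying it to the transformed integral replaces $F_\chi$ evaluated at the $W_N$-image by $\chi(-N)\psi(D)G_{\bar\chi}$ evaluated at $(i+u)y$ again — but with the sign of $u$ flipped, since $W_N$ conjugates the point $(i+u)y$ to something whose real part has the opposite sign relative to the new variable. Re-expressing the resulting integral as $L_{G_{\bar\chi}}(\psi';-u)$ for the appropriate test function $\psi'$, and checking that $\psi'=\varphi|_{2-r-s}W_N$ by the same manipulation of $\varphi_s|_aW_N$ used in Theorem \ref{functionaleq} (here one uses $(\varphi|_{2-r-s}W_N)(y)=(Ny)^{r+s-2}\varphi(1/(Ny))$ together with the $\sqrt{u^2+1}$ scaling inside $\varphi$), gives \eqref{LFX1bu}.

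For the non-emptiness of $\widetilde{\mathcal{F}}_{F,G}$, I would again use $S_c(\mathbb{R}_+)$: a compactly supported piecewise continuous $\varphi$ has Laplace transforms of $|\varphi_{j+1}|$ that decay exponentially in the frequency $2\pi(n+m)/(M\sqrt{u^2+1})$ uniformly for $u$ in compact sets, and the Gauss sums $\tau_{\bar\chi}(n)$ are $O_D(1)$, so the convergence condition \eqref{Ffconvu} for $F_\chi$ (and likewise for $G_{\bar\chi}$, and with $\varphi$ replaced by $\varphi|_{2-r-s}W_N$, which is again compactly supported away from $0$) follows from the coefficient bound \eqref{coeffgrowthu}. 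This is the same argument as in the proof of Theorem \ref{functionaleq}, modulo inserting the $\sqrt{u^2+1}$ factors.

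The main obstacle I anticipate is purely bookkeeping: getting the complex automorphy factor exactly right. In the $u=0$ case one has the clean $i^{s-r}$ from $(iy)$-type arguments, but for $u\neq 0$ the point $(i+u)y$ is genuinely complex off the imaginary axis, so one must be careful that $j(W_N,(i+u)y)=N^{1/2}(i+u)y$ and $j(W_N,\overline{(i+u)y})=N^{1/2}(-i+u)y=N^{1/2}(u-i)y$, hence the factor is $(N^{1/2}(i+u)y)^{-r}(N^{1/2}(u-i)y)^{-s}$, and after the substitution $y\mapsto 1/((u^2+1)Ny)$ and collecting powers of $y$, $N$, and $(u^2+1)$ one must land precisely on $(i-u)^r(-i-u)^s/((u^2+1)^{(r+s)/2}N^{(r+s-2)/2})$ — note the sign flips $i+u\mapsto$ its role in $(i-u)^r$ arising from the simultaneous reflection of $u\mapsto -u$ in the output $L$-series. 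Ensuring consistency of these signs with the definition of $F_\chi$ and with Lemma \ref{lemG|F}, and verifying the branch choices for the non-integer-looking but actually integer exponents $r,s$, is where care is needed; everything else is a routine transcription of the proof of Theorem \ref{functionaleq}.
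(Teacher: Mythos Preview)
Your proposal is correct and follows essentially the same approach as the paper, which merely remarks that the proof goes through as in Theorem~\ref{functionaleq} with the substitution $y\to 1/(Ny(u^2+1))$ in place of $y\to 1/(Ny)$. Your more detailed bookkeeping of the automorphy factors and the $u\mapsto -u$ flip is exactly the content hidden behind that one-line sketch.
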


Again, when $u=0$ and $F$ has an expansion like (\ref{F/M}), this simply reduces to Theorem \ref{functionaleq}. The proof goes through much the same as Theorem \ref{functionaleq}, except instead of the substitution $y\rightarrow 1/(Ny)$, we use $y \to 1/(Ny(u^2+1))$.

For the converse theorem, previously we used the analogue of Bump's Lemma, Lemma \ref{BumLem}. In this case, the auxiliary variable $u$ plays the role of that lemma, since now our functional equations essentially define $F$ on all of $\mathbb{H}$, rather than the imaginary axis only (recall equation \eqref{ConverseF1=F2}).

\begin{theorem}\label{conversethu}
    Let $N \geq 1$ be an integer, $\psi$ be a Dirichlet character modulo $N$. For some $N_0,N_0' \in \mathbb{N}$, let $a^{(j)}_{m,n}$ and $b^{(j)}_{m,n}$, with, $j \in \{-N_0, -N_0+1,...,N_0 \}$ and $n,m\geq -N_0'$, be sequences of complex numbers with condition $(\ref{coeffgrowthu})$.
    We can define the smooth functions $F_1:\mathbb{H} \to \mathbb{C}$,
        \begin{align*}
         F_1(z) =\sum_{|j| \le N_0}y^j\left ( \sum_{m, n \ge -N_0'} a_{m, n}^{(j)} \, e^{2 \pi i m z} e^{-2 \pi i n \bar{z}}\right )
    \end{align*}
    and $F_2$
    \begin{align}
         F_2(z) & =\sum_{|j| \le N_0}y^j\left ( \sum_{m, n \ge -N_0'} b_{m, n}^{(j)} \, e^{2 \pi i m z} e^{-2 \pi i n \bar{z}}\right ). \label{convfour}
    \end{align}
    Now, for all $D \in \{1,2,...,N^2-1 \}$ with gcd$(D,N)=1$, let $\chi$ be a Dirichlet character mod $D$. If, for any $\varphi\in S_c(\mathbb{R_+})$, $D$ and $\chi$, the following equation holds:
\begin{equation}
     L_{F_{1_\chi}}(\varphi ; u) =  \frac{(i-u)^r(-i-u)^s\chi(-N)\psi(D)}{(u^2+1)^{(r+s)/2} N^{\frac{r+s-2}{2}}} L_{F_{2_{\bar\chi}}}(\varphi|_{2-r-s} W_N; -u) \label{Conv1},
\end{equation}
for all $u \in \mathbb{R}$, then $(F_{1_{\chi}}||_{r,s}W_N) (z) = \chi(-N)\psi(D)   F_{2_{\bar\chi}}(z)$ and $F_1\in\mathcal{M}^!_{r,s}(\Gamma_0(N),\psi)$.
\end{theorem}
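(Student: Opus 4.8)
The plan is to mirror the proof of Theorem \ref{converseth}, with the auxiliary variable $u$ taking over the role of Lemma \ref{BumLem}. In the $u=0$ setting the functional equation only recovers $F_{1_\chi}$ along the imaginary axis, and Lemma \ref{BumLem} — fed by the extra functional equations for $\partial_r F$ and $\overline{\partial}_s F$ — is needed to propagate this to all of $\mathbb{H}$; here, letting $u$ range over $\mathbb{R}$ recovers $F_{1_\chi}$ on all of $\mathbb{H}$ directly from the single hypothesis \eqref{Conv1}, so the auxiliary equations are not needed.

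First I would fix $\varphi\in S_c(\mathbb{R}_+)$ with $\mathrm{Supp}(\varphi)\subset[c_1,c_2]\subset\mathbb{R}_+$ and put $\varphi_\xi(x):=\varphi(x)x^{\xi-1}$. Exactly as at the start of the proof of Theorem \ref{converseth}, the compact support, the growth condition \eqref{coeffgrowthu} and the bound $\tau_\chi(n)=O_D(1)$ give $\varphi_\xi\in\widetilde{\mathcal{F}}_{F_{1_\chi}}\cap\widetilde{\mathcal{F}}_{F_{2_\chi}}$ for every $\xi\in\mathbb{C}$ (and $\varphi_\xi|_{2-r-s}W_N$ is again compactly supported on $\mathbb{R}_+$ and lies in the relevant spaces), and one shows $\xi\mapsto L_{F_{j_\chi}}(\varphi_\xi;u)$ is holomorphic for each fixed $u$ by the Weierstrass $M$-test and the uniform limit theorem. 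This legitimises Mellin inversion in $\xi$.

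Next, using the integral representation $L_{F_{j_\chi}}(\varphi;u)=\int_0^\infty F_{j_\chi}((i+u)y)\,\varphi\!\left(y\sqrt{u^2+1}\right)dy$ and writing $\theta_u:=(u+i)/\sqrt{u^2+1}\in\mathbb{H}$, which satisfies $|\theta_u|=1$ (so $\theta_u\overline{\theta_u}=1$), $W_N\theta_u=\theta_{-u}$ and $W_N^2=-I$, a change of variable gives
\[
F_{j_\chi}(\theta_u w)\,\varphi(w)=\frac{\sqrt{u^2+1}}{2\pi i}\int_{(\sigma)}L_{F_{j_\chi}}(\varphi_\xi;u)\,w^{-\xi}\,d\xi .
\]
Substituting \eqref{Conv1} into the $j=1$ instance, rewriting $L_{F_{2_{\bar\chi}}}(\varphi_\xi|_{2-r-s}W_N;-u)$ through its integral form and performing the substitution $y\to 1/(Ny(u^2+1))$ (the analogue of $y\to1/(Ny)$ in Theorem \ref{functionaleq}), the $\sqrt{u^2+1}$ and $N$ factors collapse and a second Mellin inversion yields, after choosing $\varphi$ with $\varphi(w)\neq0$,
\[
F_{1_\chi}(\theta_u w)=\frac{(i-u)^r(-i-u)^s\,\chi(-N)\psi(D)}{(u^2+1)^{(r+s)/2}N^{(r+s)/2}}\,w^{-r-s}\,F_{2_{\bar\chi}}\!\big(W_N(\theta_u w)\big),
\]
valid for all $u\in\mathbb{R}$ and $w>0$. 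Since every $z\in\mathbb{H}$ equals $\theta_u w$ with $u=\mathrm{Re}(z)/\mathrm{Im}(z)$, $w=|z|$, and since, writing $z=\theta_u w$ and using $\theta_u\overline{\theta_u}=1$, the prefactor equals $(-1)^{r+s}N^{-(r+s)/2}z^{-r}\bar z^{-s}=\det(W_N^{-1})^{(r+s)/2}j(W_N^{-1},z)^{-r}j(W_N^{-1},\bar z)^{-s}$, the displayed relation is precisely $F_{1_\chi}=\chi(-N)\psi(D)\,(F_{2_{\bar\chi}}||_{r,s}W_N^{-1})$ on $\mathbb{H}$, equivalently $(F_{1_\chi}||_{r,s}W_N)(z)=\chi(-N)\psi(D)\,F_{2_{\bar\chi}}(z)$.

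Finally, with this transformation relation in hand for every admissible $D$ and $\chi$ mod $D$, the transformation law \eqref{transform} for $\Gamma_0(N)$ with character $\psi$ follows by the Weil-type argument carried out below equation (5.28) in the proof of Theorem 5.1 of \cite{diamantisLseries} (reading $||_{r,s}$ for $|_k$); condition $(ii)$ of Definition \ref{ramfdef} is immediate from the assumed expansion \eqref{convfour}; and condition $(iii)$ follows exactly as at the end of the proof of Theorem \ref{converseth}, using the analogue of Lemma \ref{miyaketype} for expansions of the form \eqref{expformu} with $M=1$ (so $f(x+iy)=O(e^{Ay})$ as $y\to\infty$ and $O(e^{B/y})$ as $y\to0$, uniformly in $x$) together with the periodicity of $f||_{r,s}\gamma$ and $\mathrm{Im}(\gamma z)\to0$ to treat the cusps with $c\neq0$. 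Hence $F_1\in\mathcal{M}^!_{r,s}(\Gamma_0(N),\psi)$. I expect the main obstacle to be the second step: carrying out the two Mellin inversions in the presence of $u$, tracking the unit-modulus factors $\theta_u$, and verifying that after the substitution $y\to1/(Ny(u^2+1))$ the surviving prefactor is precisely the automorphy factor of $W_N$, so that the recovered identity — now holding on all of $\mathbb{H}$ rather than just $i\mathbb{R}_+$ — is literally the $||_{r,s}W_N$ relation. The remaining ingredients (the test-function bookkeeping, the Weil argument, and condition $(iii)$) are essentially as in the already-established Theorems \ref{functionaleq} and \ref{converseth}.
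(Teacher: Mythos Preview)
Your proposal is correct and follows essentially the same route as the paper's proof sketch: two Mellin inversions combined with the hypothesis \eqref{Conv1} to recover the pointwise relation $F_{1_\chi}=\chi(-N)\psi(D)\,(F_{2_{\bar\chi}}||_{r,s}W_N^{-1})$, then the Weil-type argument from \cite{diamantisLseries} and the growth estimate of Lemma \ref{miyaketype} to finish. Your parametrisation $z=\theta_u w$ with $\theta_u=(u+i)/\sqrt{u^2+1}$ is a tidy repackaging of the paper's direct substitution $u=-x/y$; one small slip is the claim $W_N\theta_u=\theta_{-u}$ (in fact $W_N\theta_u=\theta_{-u}/N$), but you do not actually use this, and the substitution you quote should be $y\to 1/(Ny\sqrt{u^2+1})$ rather than $y\to 1/(Ny(u^2+1))$ at the Mellin step.
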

\begin{proof}[Proof sketch]
We start with the equation
\begin{align*}
        L_{F_{1_\chi}}(\varphi_t ; u) & = \int^{\infty}_0 F_{1_\chi}((i+u)y)\varphi(y\sqrt{u^2+1})(y\sqrt{u^2+1})^{t-1}dy
        \\ & = \dfrac{1}{\sqrt{u^2+1}} \int^{\infty}_0 F_{1_\chi}\left(\frac{(i+u)y}{\sqrt{u^2+1}}\right)\varphi(y)y^{t-1}dy.
    \end{align*}
    Mellin inversion gives
    \begin{equation*}
        \dfrac{1}{\sqrt{u^2+1}}  F_{1_\chi}\left(\frac{(i+u)y}{\sqrt{u^2+1}}\right)\varphi(y) = \frac{1}{2\pi i}\int_{(\sigma)} L_{F_{1_\chi}}(\varphi_t ; u) y^{-t} dt.
    \end{equation*}
    Setting $y= 1/(Ny\sqrt{u^2+1})$ gives
    \begin{equation*}
        \dfrac{1}{\sqrt{u^2+1}}  F_{1_\chi}\left(\frac{-1}{Ny(i-u)}\right)\varphi\left(\frac{1}{Ny\sqrt{u^2+1
    }}\right) = \frac{1}{2\pi i}\int_{(\sigma)} L_{F_{1_\chi}}(\varphi_t ; u) (Ny\sqrt{u^2+1})^{t} dt.
    \end{equation*}
    Moving the line of integration from $(\sigma)$ to $(r+s-\sigma)$ and changing variables $t \to r+s-t$, we get
\begin{equation*}
    \dfrac{1}{\sqrt{u^2+1}}  F_{1_\chi}\left(\frac{-1}{Ny(i-u)}\right)\varphi\left(\frac{1}{Ny\sqrt{u^2+1
    }}\right) = \frac{1}{2\pi i}\int_{(\sigma)} L_{F_{1_\chi}}(\varphi_{r+s-t} ; u) (Ny\sqrt{u^2+1})^{r+s-t} dt.
\end{equation*}
Using equation \eqref{Conv1}, we have
    \begin{align}
         \label{conv2}
   & \dfrac{1}{\sqrt{u^2+1}}   F_{1_\chi}\left(\frac{-1}{Ny(i-u)}\right)\varphi\left(\frac{1}{Ny\sqrt{u^2+1
    }}\right)  \\[2mm] &\quad =  \frac{(i-u)^r(-i-u)^s\chi(-N)\psi(D)}{(u^2+1)^{(r+s)/2} N^{\frac{r+s-2}{2}}} 
 \frac{1}{2\pi i}\int_{(\sigma)} L_{F_{2_{\bar\chi}}}(\varphi_{r+s-t}|_{2-r-s} W_N; -u)  (Ny\sqrt{u^2+1})^{r+s-t} dt. \nonumber
    \end{align}
Now expand $L_{F_{2_{\bar\chi}}}(\varphi_{r+s-t}|_{2-r-s} W_N; -u) = \int^{\infty}_0 F_{2_{\bar\chi}}((i-u)y) \,(\varphi_{r+s-t}|_{2-r-s} W_N)(y\sqrt{u^2+1}) dy$:
    \begin{align*}
        (\varphi_{r+s-t}|_{2-r-s} W_N)(y\sqrt{u^2+1})  & = (Ny\sqrt{u^2+1})^{r+s-2}\varphi_{r+s-t}\left(\frac{1}{Ny\sqrt{u^2+1}}\right)
        \\ & = (Ny\sqrt{u^2+1})^{r+s-2}(Ny\sqrt{u^2+1})^{t+1-r-s} \varphi\left(\frac{1}{Ny\sqrt{u^2+1}}\right)
        \\ & = (Ny\sqrt{u^2+1})^{t-1} \varphi\left(\frac{1}{Ny\sqrt{u^2+1}}\right),
    \end{align*}
    and therefore, we have
    \begin{align*}
L_{F_{2_{\bar\chi}}}(\varphi_{r+s-t}|_{2-r-s} W_N; -u) = \int^{\infty}_0 F_{2_{\bar\chi}}((i-u)y) (Ny\sqrt{u^2+1})^{t-1} \varphi\left(\frac{1}{Ny\sqrt{u^2+1}}\right) dy
\\ = \dfrac{1}{N\sqrt{u^2+1}} \int^{\infty}_0 F_{2_{\bar\chi}}\left(\frac{(i-u)y}{N\sqrt{u^2+1}}\right) y^{t-1} \varphi\left(\frac{1}{y}\right) dy.
    \end{align*}
    Mellin inversion gives:
    \begin{equation*}
        \dfrac{1}{2\pi i }\int_{(\sigma)} L_{F_{2_{\bar\chi}}}(\varphi_{r+s-t}|_{2-r-s} W_N; -u)y^{-t} dt = \dfrac{1}{N\sqrt{u^2+1}}  F_{2_{\bar\chi}}\left(\frac{(i-u)y}{N\sqrt{u^2+1}}\right) \varphi\left(\frac{1}{y}\right) 
    \end{equation*}
    and then setting $y = Ny\sqrt{u^2+1}$ gives
    \begin{align*}
        \dfrac{1}{2\pi i }\int_{(\sigma)} L_{F_{2_{\bar\chi}}}(\varphi_{r+s-t}|_{2-r-s} W_N; -u)&(Ny\sqrt{u^2+1})^{-t} dt \\ & = \dfrac{1}{N\sqrt{u^2+1}}  F_{2_{\bar\chi}}((i-u)y) \varphi\left(\frac{1}{Ny\sqrt{u^2+1}}\right) \!.
    \end{align*}
    Now, equation \eqref{conv2} becomes
\begin{align}
    \nonumber
   &  \dfrac{1}{\sqrt{u^2+1}}   F_{1_\chi}\left(\frac{-1}{Ny(i-u)}\right)\varphi\left(\frac{1}{Ny\sqrt{u^2+1
    }}\right)  \\[2mm] &\ \, \ =  \frac{(i-u)^r(-i-u)^s\chi(-N)\psi(D)}{(u^2+1)^{(r+s)/2} N^{\frac{r+s-2}{2}}} 
(Ny\sqrt{u^2+1})^{r+s}  \dfrac{1}{N\sqrt{u^2+1}}  F_{2_{\bar\chi}}((i-u)y) \varphi\left(\frac{1}{Ny\sqrt{u^2+1}}\right) \!. \nonumber
    \end{align}
    Hence
    \begin{align*}
         F_{1_\chi}\left(\frac{-1}{Ny(i-u)}\right) & =  \frac{(i-u)^r(-i-u)^s\chi(-N)\psi(D)}{ N^{\frac{r+s-2}{2}}} 
N^{r+s-1}y^{r+s}   F_{2_{\bar\chi}}((i-u)y)
\\ & = ((i-u)y)^r((-i-u)y)^s N^{\frac{r+s}{2}} \chi(-N)\psi(D)   F_{2_{\bar\chi}}((i-u)y).
    \end{align*}
    Since this holds for all $u \in \mathbb{R}$, this holds for $u=-x/y$. Using this substitution, the above equation becomes
    \begin{equation*}
        F_{1_\chi}\left(\frac{-1}{N(x+iy)}\right)  = (x+iy)^r(x-iy)^s N^{\frac{r+s}{2}} \chi(-N)\psi(D)   F_{2_{\bar\chi}}(x+iy)
    \end{equation*}
    and hence
    \begin{equation*}
        F_{1_\chi}\left(\frac{-1}{Nz}\right) = z^r\bar{z}^s N^{\frac{r+s}{2}} \chi(-N)\psi(D)   F_{2_{\bar\chi}}(z).
    \end{equation*}
    We then conclude that
    \begin{align*}
        \chi(-N)\psi(D)   F_{2_{\bar\chi}}(z) & =  z^{-r}\bar{z}^{-s} N^{\frac{-r-s}{2}}  F_{1_\chi}\left(\frac{-1}{Nz}\right)
        \\ & = (F_{1_\chi}||_{r,s}W_N)(z),
    \end{align*}
    as required. We then conclude in the same way as in Theorem $\ref{converseth}$ that $F_1$ satisfies $(i)$ of Definition \ref{ramfdef} with weights $(r,s)$. We can use a result similar to Lemma \ref{miyaketype} to verify that $F_1$ has sufficient growth at the cusps for condition $(iii)$.
\end{proof}

\bibliography{paperrefs} 
\bibliographystyle{ieeetr}

\end{document}